\documentclass[letterpaper, 10pt, journal, twoside]{IEEEtran}
\usepackage{graphicx,keyval,trig}
\usepackage{amsfonts,amssymb,dsfont,epstopdf, epsfig}
\usepackage{amsthm}

\usepackage{enumitem}
\usepackage{float}
\usepackage{mathtools, mathrsfs, amsmath}
\usepackage{booktabs}
\usepackage[usenames, dvipsnames]{color}
\usepackage[colorlinks=true,
         raiselinks=true,
         linkcolor=MidnightBlue,
         citecolor=Mahogany,
         urlcolor=ForestGreen,
         pdfauthor={},
         pdftitle={},
         pdfkeywords={stochastic reachability and avoidance, optimal
control},
         pdfsubject={Technical Report},
         plainpages=false]{hyperref}

  \usepackage{todonotes}

\usepackage{cite}
\usepackage{amsmath,amssymb,amsfonts}
\usepackage{algorithmic}
\usepackage{graphicx}
\usepackage{algorithm,algorithmic}
\usepackage{hyperref}
\hypersetup{hidelinks=true}
\usepackage{textcomp}
\usepackage{subcaption}

\newtheorem{theorem}{Theorem}
\newtheorem{proposition}{Proposition}
\newtheorem{definition}{Definition}
\newtheorem{corollary}{Corollary}
\newtheorem{example}{Example}
\newtheorem{remark}{Remark}
\newtheorem{lemma}{Lemma}

\usepackage{tikz}
\usetikzlibrary{patterns, arrows.meta}

\begin{document}
\title{Featurized Occupation Measures for Structured Global Search in Numerical Optimal Control}
\author{Qi Wei, Jianfeng Tao, Haoyang Tan, Hongyu Nie
\thanks{This work was supported by the National Key R\&D Program of China under Grant 2024YFF0505303.} 
\thanks{Qi Wei (v7sjtu@sjtu.edu.cn), Jianfeng Tao (corresponding author), Haoyang Tan, and Hongyu Nie are with the School of Mechanical Engineering, Shanghai Jiao Tong University, Shanghai 200240, China (e-mail: jftao@sjtu.edu.cn).}}

\maketitle

\begin{abstract}
Numerical optimal control has long been split between globally structured but dimensionally intractable Hamilton--Jacobi--Bellman (HJB) methods and scalable but local trajectory optimization. We introduce Featurized Occupation Measures (FOM), a finite-dimensional primal--dual interface for coupling numerical optimal control solvers with explicit HJB subsolutions: the certificate guides the primal search, while primal residuals tighten the certificate in a primal-dual language.

Two realizations are developed. The explicit realization uses finite weak-form Liouville tests, and the implicit realization couples rollout-based search with sampled primal--dual residuals. Both are proved asymptotically consistent with the exact occupation-measure linear program under refinement, separating primal expressiveness from dual accuracy in the limit.

The framework also gives structural conditions under which HJB-type certificates avoid full state-space representation. For factor graphs induced by compatible passivity-based interconnections, blockwise HJB inequalities assemble into globally feasible OM-dual certificates, and the decomposition is preserved under blockwise approximation. 
The curse of dimensionality is then shifted from state space to interconnection topology.
Approximate certificates remain reusable under time shifts and bounded model perturbations, with explicit degradation bounds.

On a static obstacle-avoidance benchmark, certificates of increasing tightness guide a sample-based optimizer toward global optima, confirming that even a coarse certificate carries useful global information.

\end{abstract}

\begin{IEEEkeywords}
Hamilton-Jacobi-Bellman equations, occupation measures, optimal control, sampling methods, trajectory optimization.

\end{IEEEkeywords}

\section{Introduction}
Optimal control governs decision-making in autonomous systems and robotics, over long horizons and under nonlinear dynamics, complex constraints, contact- or task-induced combinatorial structure \cite{betts2010practical, posa2014direct, deits2014footstep, zhang2025simultaneous}, as well as dynamic-game interactions \cite{bacsar1998dynamic,lidard2024blending, li2025multi}.
These problems are highly nonconvex; ensuring time-consistency during execution beyond nominal trajectory solutions is one of the central difficulties.

Numerical optimal control has developed along two main lines. 
The first seeks a solution of the Hamilton–Jacobi–Bellman (HJB) equation \cite{bardi1997optimal,fleming2006controlled}, whose global value function, when available, induces an optimal or near-optimal feedback law and hence closed-loop control \cite{bertsekas2012dynamic}. 
Its key advantage is the direct maintenance of global value structure, while its main limitation is the curse of dimensionality \cite{bellman1966dynamic}: global state-space discretization typically leads to complexity growing exponentially with dimension. 
Structured representations that partially circumvent this scaling include sparse and adaptive grids \cite{garcke2017suboptimal, bokanowski2013adaptive, kang2017mitigating}, low-rank tensor formats \cite{dolgov2021tensor, horowitz2014linear}, and characteristic-based pointwise evaluation \cite{darbon2016algorithms, chow2019algorithm}.
Unlike these numerical approximation approaches, a complementary classical route exploits physical structure directly. Passivity-based design \cite{sepulchre2012constructive,KOKOTOVIC2001637,freeman1996robust} systematically constructs value-like functions from subsystem interconnection topology, composing storage functions block by block, a view supported by recent results \cite{GRUNE202319,10383497}.

The other performs trajectory-wise local optimization instead of solving the global HJB equation directly. It includes first-order methods based on the Pontryagin maximum principle \cite{pontryagin1963mathematical} (or function-space KKT conditions \cite{bosch2000proof}), and second-order schemes such as DDP \cite{mayne1966second, jacobson1970differential, tassa2014control}, iLQR \cite{li2004iterative}, and SQP \cite{betts2010practical}. 
These methods exploit first- or second-order necessary conditions along a trajectory, which can be viewed, in smooth settings, as characteristic-type reductions of the HJB equation \cite{fleming1975deterministic,subbotina2006method}, which sidesteps the curse of dimensionality. The cost is a tight coupling to the sequential structure of the problem, and the outcome remains an open-loop nominal solution.
Consequently, under strong nonconvexity, disconnected local minima, or information updates, such open-loop nominal solutions often have limited reusability; in the presence of uncertainty or dynamic-game structure, issues of time consistency become especially acute, as is classical in dynamic game theory \cite{bacsar1998dynamic}.

Local optimization, however, need not proceed along a single direction at each step. In difficult nonconvex problems, it is often more effective to update over a structured set of admissible candidates. Many sample-based MPC and path-integral methods \cite{kappen2005linear, theodorou2010generalized, williams2017model, williams2018information} can be interpreted in this way: each iteration samples, filters, and reweights candidate corrections from a set shaped by a search distribution, constraints, or local surrogates. Existing ideas from heuristic search \cite{hansen2001lao}, relaxed dynamic programming \cite{lincoln2006relaxing}, and admissible-heuristic construction for kinodynamic planning \cite{paden2016design} all suggest that globally defined value surrogates can guide local search.
We seek a framework that combines trajectory-wise efficiency with a persistent global value structure.

The occupation-measure (OM) formulation connects these two viewpoints. 
It lifts nonlinear optimal control to an infinite-dimensional linear program over measures, whose dual is an HJB inequality \cite{vinter1993convex,lasserre2008nonlinear,kamoutsi2017infinite}. 
Dual-feasible functions are HJB subsolutions, i.e., globally valid lower-bound certificates. 
They can be used to compare candidate trajectories, restrict locally admissible controls, prune prefixes, and identify where a search should be refined \cite{paden2016design,lincoln2006relaxing,brown2022information}.

Finite-dimensional realizations of this primal--dual picture take different forms. 
Moment--SOS hierarchies preserve the convex OM primal--dual structure for polynomial problems through moment truncations and SOS positivity certificates \cite{lasserre2008nonlinear}. 
Sample-based numerical optimal control methods generate trajectories or trajectory distributions which induce empirical occupation-type measures. 
Their updates are usually specified directly in trajectory or parameter space, leaving the associated OM-dual certificate layer implicit. 
This leaves open a finite-dimensional interface in which the global certificate is explicit and the primal admits more general realizations (not necessarily a convex relaxation).

We introduce featurized occupation measures (FOM) as a certificate-first formulation of this interface. 
A FOM realization maintains an induced occupation-type primal pair, a finite-dimensional HJB-certificate class, and a Liouville residual. 
The primal realization need not be a convex finite-dimensional OM relaxation; it may arise from weak-form approximation, or sampling-based search. 
The Liouville residual records the departure from exact OM feasibility, allowing approximate certificates to retain quantitative meaning as lower bounds, gap indicators, and guides for pruning or refinement. 
Thus rollout and sample-based updates, including MPPI \cite{kappen2005linear,theodorou2010generalized,williams2017model,williams2018information} and RL-style policy search methods \cite{schulman2017proximal,silver2014deterministic,varnai2020path,cai2026qguidedsteinvariationalmodel}, can be equipped with an explicit OM-dual certificate layer. 
Pruning, guidance, and warm starts are then expressed at the global level; see also \cite{holtorf2024bounds}.

The certificate-first viewpoint makes it natural to ask whether an OM-dual certificate can be decomposed, following the same structural logic as passivity-based storage functions. 
Whether this parallel holds is a central concern of this paper. 
The certificate would then be more than an a posteriori bound. It would be a structured object whose tractability can be designed through systematic exploitation of task structure.

\emph{Contributions.}
\begin{enumerate}
    \item \textbf{FOM framework.} We formulate Featurized Occupation Measures as a finite-dimensional primal--dual interface for the occupation-measures LP relaxation of Bolza optimal control, that pairs a numerical optimal control solver with an explicit HJB subsolution, so that global certificates guide primal search in a primal-dual language.

    \item \textbf{Explicit and implicit FOM.} A gap decomposition (Proposition~\ref{prop:fom_lower_bound_interface}) extends the OM framework to arbitrary primal realizations and approximate certificates. This decoupling is what allows nonconvex primal realization. We give two complementary realizations, explicit and implicit FOM, and prove their asymptotic consistency with the exact OM LP (Theorems~\ref{thm:explicit_fom_consistency} and~\ref{thm:implicit_fom_consistency}), with primal expressiveness and dual accuracy decoupled into independent limits.
    
    \item \textbf{Structural certificate complexity.}
    Under compatible cost and dynamics decompositions, the factor graph encoding interconnection topology is proved structurally invariant: the same block decomposition that supports a passivity-based design simultaneously induces a block-additive OM-dual certificate (Lemma~\ref{lem:block_hjb_slack_factorization}, Theorem~\ref{thm:blockwise_om_dual_assembly}, Proposition~\ref{prop:inherited_factor_graph_ioc}).
    We prove that per-block approximation errors propagate additively (Corollary~\ref{cor:properties_approx_block_additive}), shifting the curse of dimensionality from state-space dimension to interconnection topology. Certificates remain valid across time shifts and bounded model perturbations (Proposition~\ref{prop:properties_certificate_reuse}), making them reusable as warm starts.
\end{enumerate}
The paper is organized as follows. Section~\ref{sec:preliminaries} reviews the OM primal--dual structure. Section~\ref{sec:fom} formulates FOM and its mechanisms, with explicit and implicit FOM realizations,and proves their asymptotic consistency with the exact OM LP. Section~\ref{sec:properties} develops structural certificate results and computational consequences. Section~\ref{sec:experiments} demonstrates the framework on benchmark problems, and Section~\ref{sec:conclusion} concludes.

\section{Preliminaries}
\label{sec:preliminaries}
\subsection{From Bolza optimal control to occupation measures}

Consider the deterministic fixed-horizon Bolza optimal control problem
\begin{equation}
\label{eq:BolzaProblem}
\begin{aligned}
\min_{x(\cdot),u(\cdot)}\;
&\int_{t_0}^{T}\ell\big(t,x(t),u(t)\big)\,dt \;+\; g\big(x(T)\big)\\
\mathrm{s.t.}\quad
&\dot x(t)=f\big(t,x(t),u(t)\big),
\qquad t\in[t_0,T],\\
&x(t_0)=x_0,\quad
u(t)\in\mathcal U,\; x(t)\in X,
\end{aligned}
\end{equation}
where $X\subseteq\mathbb R^n$ is the state space, $\mathcal U\subseteq\mathbb R^m$ is the admissible control set, $\ell:[t_0,T]\times X\times\mathcal U\to\mathbb R$ is the running cost, $g:X\to\mathbb R$ is the terminal cost, and $f:[t_0,T]\times X\times\mathcal U\to\mathbb R^n$ is the vector field.
A pair $(x(\cdot),u(\cdot))$ satisfying the dynamics is an admissible state--control pair.

The occupation-measure lifting reformulates this nonlinear, nonconvex problem as an infinite-dimensional linear program over measures. 
Define the support set
\[
Z:=[t_0,T]\times X\times\mathcal U,
\]
and let $C(Z)$ be the space of continuous functions on $Z$, $C^1([t_0,T]\times X)$ the space of continuously differentiable functions on the time--state domain.
Denote the dual pairing between a measure and a continuous function by $\langle \phi,\rho\rangle:=\int \phi\,d\rho$.

If state constraints are present, one restricts the state to $X_{\mathrm{adm}}\subset X$ and replaces $Z$ with $Z_{\mathrm{adm}}:=[t_0,T]\times X_{\mathrm{adm}}\times\mathcal U$.

Given an admissible pair $(x(\cdot),u(\cdot))$, its induced \emph{occupation measure} $\mu\in\mathcal M_+(Z)$ is defined by
\begin{equation}
\label{eq:omOccupationMeasure}
\langle \varphi,\mu\rangle
\ :=\
\int_{t_0}^{T}\varphi\big(t,x(t),u(t)\big)\,dt,
\qquad \forall \varphi\in C(Z).
\end{equation}
Equivalently, for every Borel set $A\subset Z$,
\begin{equation}
\label{eq:omOccupationMeasureSet}
\mu(A)
\ :=\
\int_{t_0}^{T}\mathbf 1_A\big(t,x(t),u(t)\big)\,dt;
\end{equation}
$\mu$ is the pushforward of the Lebesgue measure on $[t_0,T]$ under the map
$t\mapsto (t,x(t),u(t))$.

Let $\mu_0\in\mathcal M_+(\{t_0\}\times X)$ and $\mu_T\in\mathcal M_+(\{T\}\times X)$ be the initial and terminal boundary measures.
We write $\mu_0(X)$ and $\mu_T(X)$ for the total masses.
In the single deterministic trajectory case, $\mu_0=\delta_{(t_0,x_0)}$ and $\mu_T=\delta_{(T,x(T))}$.

The Bolza cost becomes a linear functional in $(\mu,\mu_T)$:
\begin{equation}
\label{eq:omLinearBolzaCost}
\int_{t_0}^{T}\ell\big(t,x(t),u(t)\big)\,dt+g\big(x(T)\big)
=
\langle \ell,\mu\rangle+\langle g,\mu_T\rangle .
\end{equation}

Define the space--time transport operator $L_f:C^1([t_0,T]\times X) \to C(Z)$
\begin{equation}
\label{eq:LfDefinition}
(L_fv)(t,x,u) :=\partial_t v(t,x)+\nabla_x v(t,x)^\top f(t,x,u).
\end{equation}
For any $v\in C^1([t_0,T]\times X)$, the chain rule yields $\frac{d}{dt}v(t,x(t)) = L_fv(t,x(t),u(t))$.
Integrating from $t_0$ to $T$ and rewriting with the occupation measure yields the \emph{weak Liouville relation}
\begin{equation}
\label{eq:liouvilleConstraint}
\langle v,\mu_T-\mu_0\rangle
=
\langle L_f v,\mu\rangle,
\qquad
\forall v\in C^1([t_0,T]\times X).
\end{equation}
In the Dirac case $\mu_0=\delta_{(t_0,x_0)}$, $\mu_T=\delta_{(T,x(T))}$, this reduces to the trajectory-wise transport identity
\begin{equation}
\label{eq:liouvilleTransportIdentity}
v(T,x(T))-v(t_0,x_0)
=
\int_{t_0}^{T}L_fv(t,x(t),u(t))\,dt.
\end{equation}
Since $L_f$ carries $f$, the weak Liouville relation expresses the nonlinear dynamics as a linear equation in measures.

Collecting the linearized cost~\eqref{eq:omLinearBolzaCost} and the Liouville constraint~\eqref{eq:liouvilleConstraint} yields the
\emph{occupation measure linear program (OM LP)}:
\begin{equation}
\label{eq:omPrimalLp}
\begin{aligned}
P:=&\inf_{\mu\in\mathcal M_+(Z),\ \mu_T\in\mathcal M_+(\{T\}\times X)}
\langle \ell,\mu\rangle+\langle g,\mu_T\rangle\\
&\mathrm{s.t.}\quad
\langle v,\mu_T-\mu_0\rangle = \langle L_f v,\mu\rangle,
\quad \forall v\in C^1([t_0,T]\times X).
\end{aligned}
\end{equation}

The OM LP is a convex relaxation of the Bolza problem.
Under the classical local convexity condition that, for every $(t,x)$, $f(t,x,\mathcal U)$ is convex and
\[
    v\mapsto \inf\{\ell(t,x,u):u\in\mathcal U,\ f(t,x,u)=v\}
\]
is convex, the OM LP and the original Bolza problem have the same value~\cite{lasserre2008nonlinear}. 
This covers control-affine dynamics $\dot x=a(t,x)+B(t,x)u$ with convex $\mathcal U$ and $\ell$ convex in $u$. 
Otherwise, the OM LP is a relaxed-control convexification and may yield a strictly smaller value.

\subsection{OM dual problem and the HJB slack}

The dual variable of the OM LP~\eqref{eq:omPrimalLp} is a smooth function $v\in C^1([t_0,T]\times X)$, and the dual problem is
\begin{equation}
\label{eq:omDualLp}
\begin{aligned}
P^\star:=&\sup_{v\in C^1([t_0,T]\times X)}
\langle v,\mu_0\rangle\\
&\mathrm{s.t.}\quad
\ell(t,x,u)+(L_fv)(t,x,u)\ge 0,
\quad \forall (t,x,u)\in Z,\\
&\quad \;\; g(x)-v(T,x)\ge 0,
\quad \forall x\in X.
\end{aligned}
\end{equation}
The dual constraints are the HJB inequalities.

\begin{definition}[HJB slack]
\label{def:hjb_slacks}
For $v\in C^1([t_0,T]\times X)$, the \emph{HJB slack} is the pair $(s,s_T)$ of running and terminal slacks
\begin{equation}
\label{eq:hjbSlacks}
\begin{aligned}
s(t,x,u)&:=\ell(t,x,u)+L_fv(t,x,u),\qquad (t,x,u)\in Z,\\[2pt]
s_T(x)&:=g(x)-v(T,x),\qquad x\in X.
\end{aligned}
\end{equation}
\end{definition}

Dual feasibility is then
\[
s(t,x,u)\ge 0\quad(\forall (t,x,u)\in Z),\qquad
s_T(x)\ge 0\quad(\forall x\in X).
\]
A dual-feasible $v$ is an \emph{HJB subsolution} or an \emph{OM-dual certificate}.
Expanded, the dual constraints read
$-\partial_t v(t,x)\le \inf_{u\in\mathcal U}\{\ell(t,x,u)+\nabla_x v(t,x)^\top f(t,x,u)\}$,
$v(T,x)\le g(x)$.

For any primal-feasible $(\mu,\mu_T)$ and dual-feasible $v$, weak duality yields
\[
\langle v,\mu_0\rangle
\le
\langle \ell,\mu\rangle+\langle g,\mu_T\rangle.
\]
Using the slacks, the primal--dual gap decomposes as
\begin{equation}
\label{eq:omPrimalDualGap}
\begin{aligned}
\operatorname{gap}(\mu,\mu_T;v)
&:=
\big[\langle \ell,\mu\rangle+\langle g,\mu_T\rangle\big]-\langle v,\mu_0\rangle\\
&\;=
\langle \ell+L_fv,\mu\rangle+\langle g-v(T,\cdot),\mu_T\rangle\\
&\;=
\langle s,\mu\rangle+\langle s_T,\mu_T\rangle
\;\ge\; 0.
\end{aligned}
\end{equation}
The second equality follows from substituting the weak Liouville relation, which gives $\langle v,\mu_0\rangle = \langle v,\mu_T\rangle-\langle L_fv,\mu\rangle$, which holds because $(\mu,\mu_T)$ is primal-feasible.
Section~\ref{sec:fom} extends this identity to finitely realized primal pairs and approximate dual certificate.

When $X,\mathcal U$ are compact and $\ell$, $g$, and $L_f v$ (for all $v\in C^1$) 
are continuous on $Z$, the OM LP satisfies strong duality~\cite{lasserre2008nonlinear}:
\[
P=P^\star.
\]

\begin{remark}[$v$ is weak transport test and certificate]
\label{rem:probe_certificate}
The dual variable $v$ serves as both a transport probe and a cost certificate.
Through the weak Liouville relation alone, it tests whether $(\mu,\mu_T)$ is consistent with a valid trajectory.
With the dual feasibility $s\ge0$, $s_T\ge0$, $v$ provides a global lower bound on the Bolza cost:
\[
v(t_0,x_0) \le \int_{t_0}^{T}\ell\,dt + g(x(T))
\]
along every admissible trajectory.
\end{remark}

\subsection{Approximate dual feasibility and relaxed certificate bounds}
\label{subsec:prelim_eps_certificate}

Exact dual feasibility requires $s(t,x,u)\ge0$ at every $(t,x,u)\in Z$.
A certificate from a finite-parameter family $\mathcal V_\Psi$ may violate exact feasibility at some $(t,x,u)$; the FOM framework should tolerate bounded violations and correct the bound accordingly.
Definition~\ref{def:approx_dual_feasibility} relaxes the pointwise condition and yields a corrected lower bound.

\begin{definition}[$(\varepsilon,\varepsilon_T)$-feasibility]
\label{def:approx_dual_feasibility}
For $\varepsilon,\varepsilon_T\ge 0$, $v$ is \emph{$(\varepsilon,\varepsilon_T)$-feasible} if
\begin{equation}
\label{eq:approxDualFeasible}
\begin{aligned}
s(t,x,u)&\ge -\varepsilon\quad(\forall (t,x,u)\in Z),
\\
s_T(x)&\ge -\varepsilon_T\quad(\forall x\in X).
\end{aligned}
\end{equation}
\end{definition}
Since $s\ge -\varepsilon$ is equivalent to $(\ell+\varepsilon)+L_fv\ge0$, an $(\varepsilon,\varepsilon_T)$-feasible $v$ is an HJB subsolution for the perturbed cost $(\ell+\varepsilon,\;g+\varepsilon_T)$. 

\begin{proposition}[Certified lower bounds from approximate certificates]
\label{prop:prelim_approx_OM_lower_bound}
In the deterministic fixed-horizon setting, every primal-feasible pair satisfies
$\mu(Z)=(T-t_0)\,\mu_0(X)$ and $\mu_T(X)=\mu_0(X)$.
Let $v$ be $(\varepsilon,\varepsilon_T)$-feasible and define
\[
\underline P(v)
:= \langle v(t_0,\cdot),\mu_0\rangle - (T-t_0)\,\mu_0(X)\,\varepsilon - \mu_0(X)\,\varepsilon_T,
\]
Then $\underline P(v)\le P$.
\end{proposition}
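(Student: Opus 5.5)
The plan has two parts. First, I will verify the mass identities by testing the weak Liouville relation \eqref{eq:liouvilleConstraint} against two simple functions. Then I will apply the gap identity \eqref{eq:omPrimalDualGap} with the slacks bounded below by $-\varepsilon$ and $-\varepsilon_T$ instead of $0$, and take the infimum over primal-feasible pairs.

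\textbf{Mass identities.} Take $v\equiv 1$. Then $L_f v=0$, and \eqref{eq:liouvilleConstraint} gives $\mu_T(\{T\}\times X)-\mu_0(\{t_0\}\times X)=0$, that is, $\mu_T(X)=\mu_0(X)$. Next take $v(t,x)=t$. Then $L_f v=1$ on $Z$, so $\langle v,\mu_T-\mu_0\rangle=T\mu_T(X)-t_0\mu_0(X)=\langle 1,\mu\rangle=\mu(Z)$. Combined with the first identity, this yields $\mu(Z)=(T-t_0)\mu_0(X)$. Both test functions lie in $C^1([t_0,T]\times X)$, so this step is routine.

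\textbf{Lower bound.} Fix any primal-feasible $(\mu,\mu_T)$. Because the weak Liouville relation holds for this pair, the derivation of \eqref{eq:omPrimalDualGap} goes through for any $C^1$ function $v$; that derivation never used dual feasibility. Hence
\[
\langle \ell,\mu\rangle+\langle g,\mu_T\rangle-\langle v,\mu_0\rangle=\langle s,\mu\rangle+\langle s_T,\mu_T\rangle .
\]
The measures are nonnegative, and $v$ is $(\varepsilon,\varepsilon_T)$-feasible, so $s\ge-\varepsilon$ and $s_T\ge-\varepsilon_T$. Therefore the right-hand side is at least
\[
-\varepsilon\,\mu(Z)-\varepsilon_T\,\mu_T(X)=-(T-t_0)\mu_0(X)\varepsilon-\mu_0(X)\varepsilon_T ,
\]
where the equality uses the mass identities. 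Since $\mu_0$ is supported on $\{t_0\}\times X$, we have $\langle v,\mu_0\rangle=\langle v(t_0,\cdot),\mu_0\rangle$. Rearranging gives $\underline P(v)\le\langle \ell,\mu\rangle+\langle g,\mu_T\rangle$. Taking the infimum over all primal-feasible pairs yields $\underline P(v)\le P$. If no primal-feasible pair exists, then $P=+\infty$ and the bound is trivial.

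\textbf{Main obstacle.} The only delicate point is integrability. Every pairing must be finite so that the subtraction in the gap identity is legitimate. This needs $\mu$ and $\mu_T$ to have finite mass, which the mass identities guarantee whenever $\mu_0$ is finite. It also needs $\ell$, $g$, $v$ and $L_fv$ to be integrable against these measures, which holds under the standing compactness and continuity assumptions used for strong duality. If $\ell$ or $g$ were unbounded, I would instead argue that the cost pairing is either finite or equal to $+\infty$, and in the latter case the inequality is trivial.
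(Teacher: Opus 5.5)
Your proposal is correct and follows essentially the same route as the paper: apply the gap identity \eqref{eq:omPrimalDualGap} (which, as you note, uses only the weak Liouville relation), bound the slack pairings below by $-\varepsilon\,\mu(Z)-\varepsilon_T\,\mu_T(X)$, substitute the mass identities, and take the infimum over primal-feasible pairs. The one addition is that you derive the mass identities by testing the Liouville relation with $v\equiv 1$ and $v(t,x)=t$, whereas the paper simply asserts them.
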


\begin{proof}
From the gap decomposition~\eqref{eq:omPrimalDualGap} and $s\ge -\varepsilon$, $s_T\ge -\varepsilon_T$,
\[
\begin{aligned}
\langle \ell,\mu\rangle+\langle g,\mu_T\rangle-\langle v,\mu_0\rangle
&= \langle s,\mu\rangle+\langle s_T,\mu_T\rangle\\
&\ge -\varepsilon\,\mu(Z)-\varepsilon_T\,\mu_T(X).
\end{aligned}
\]
Rearranging and substituting $\mu(Z)=(T-t_0)\,\mu_0(X)$, $\mu_T(X)=\mu_0(X)$ yields, for every primal-feasible pair,
\[
\langle v(t_0,\cdot),\mu_0\rangle - (T-t_0)\,\mu_0(X)\,\varepsilon - \mu_0(X)\,\varepsilon_T
\le \langle \ell,\mu\rangle+\langle g,\mu_T\rangle .
\]
The left-hand side is $\underline P(v)$. Taking the infimum over all primal-feasible pairs yields $\underline P(v)\le P$.
\end{proof}

Proposition~\ref{prop:prelim_approx_OM_lower_bound} establishes that an approximate certificate yields a lower bound, with degradation linear in $\varepsilon$ and $\varepsilon_T$, and is independent of the state-space dimension.
For a single trajectory with $\mu_0=\delta_{x_0}$, the bound reduces to
$v(t_0,x_0) - \varepsilon(T-t_0) - \varepsilon_T \le P$.

\section{Featurized Occupation Measures}
\label{sec:fom}
This section studies \emph{finite-dimensional} realizations of the exact OM LP.
Under the certificate-first viewpoint, the goal is to retain the OM primal--dual structure at finite resolution while allowing flexible, possibly nonconvex primal realizations.
We focus on two such primal realizations.

In an \emph{explicit FOM}, one parameterizes a global primal pair and represents the Liouville residual through finitely many test functions, which can be computationally expensive.
In an \emph{implicit FOM}, one instead uses existing ODE integrators to generate segmentwise or full rollouts, as in multiple or single shooting, and interprets the resulting weak residual through local Dirac masses.
The resulting finite models are different, but both will be shown asymptotically consistent with the exact OM LP.
Set
\[
\begin{aligned}
Z &:=[t_0,T]\times X\times U,\\
\mathcal M &:=\mathcal M_+(Z)\times \mathcal M_+(\{T\}\times X),\\
\mathcal V &:=C^1([t_0,T]\times X).
\end{aligned}
\]

\subsection{General FOM components}
\label{subsec:general_fom_structure}
A featurized occupation measure (FOM) is any finite-dimensional realization that
(i) produces a global primal pair,
(ii) keeps a finite-capacity certificate class explicit, and
(iii) equips the realized primal pair with a residual mechanism measuring departure from Liouville feasibility.

Fix a finite-capacity trial family $\Theta^{(M)}$ parameterized by $\theta$, and a finite-dimensional certificate family $\mathcal V_\Psi$ parameterized by $\psi$.
\begin{definition}[FOM framework]
\label{def:fom_framework}
A general FOM realization consists of the following objects.

First, a \emph{trial realization family}
\[
\Theta^{(M)},
\]
whose elements $\theta$ are finite-dimensional parameters.
In deterministic optimal control, typical examples include an open-loop control $u_\theta:[t_0,T]\to U$ and a feedback law
$\kappa_\theta:[t_0,T]\times X\to U$.
More generally, $\theta$ need only parameterize a realizable control mechanism that induces a global primal pair.

Second, an \emph{induced global primal pair}
\[
\theta \longmapsto (\mu_\theta,\mu_{T,\theta})\in \mathcal M,
\]
where $\mu_\theta$ is the realized occupation-type measure, $\mu_{T,\theta}$ the realized terminal measure.
The realized cost is then evaluated by
\[
J(\theta)
:=
\langle \ell,\mu_\theta\rangle
+
\langle g,\mu_{T,\theta}\rangle .
\]

Third, a parameterized finite-capacity \emph{certificate class}
\[
\mathcal V_\Psi=\{v_\psi:\psi\in\Psi\}\subset\mathcal V,
\]
which preserves the dual side of the OM formulation at finite dimension.

Fourth, for each realized primal pair $(\mu_\theta,\mu_{T,\theta})$, we associate its
\emph{Liouville residual} (or primal-feasibility residual) functional
\begin{equation}
\label{eq:fomLiouvilleResidual}
\mathcal R_\theta(v)
:=
\langle v,\mu_{T,\theta}-\mu_0\rangle
-
\langle L_f v,\mu_\theta\rangle,
\quad
v\in\mathcal V.
\end{equation}
Exact Liouville feasibility is equivalent to
\[
\mathcal R_\theta(v)=0
\qquad
\forall v\in\mathcal V.
\]
\end{definition}

Every Liouvillian pairing $\mu\mapsto\langle L_f v,\mu\rangle$ that appears in a residual is assumed well-defined and weak-$\star$ continuous on the induced measure class.
The condition is automatic when $f$ is continuous and measures have compact support. It also holds for piecewise-continuous $f$ whose jump set carries zero mass under the induced occupation measures.

\subsection{Certificate evaluation and residual-aware bounds}
\label{subsec:fom_certificate_mechanisms}
In the FOM setting, the realized trial pairs $(\mu_\theta,\mu_{T,\theta})$ may violate Liouville feasibility.
The residual $\mathcal R_\theta(v_\psi)$ corrects the lower bound, giving the following residual-aware gap decomposition.

Following Section~\ref{sec:preliminaries}, the HJB slack of certificate $v_\psi$ is the pair $(s_\psi,s_{T,\psi})$ given by
\begin{equation}
\label{eq:fomSlacks}
\begin{aligned}
s_\psi(t,x,u)&:=\ell(t,x,u)+(L_f v_\psi)(t,x,u),\\
s_{T,\psi}(x)&:=g(x)-v_\psi(T,x).
\end{aligned}
\end{equation}
Recall from Definition~\eqref{def:approx_dual_feasibility} that $v_\psi$ is $(\varepsilon,\varepsilon_T)$-feasible iff
\[
s_\psi(t,x,u)\ge -\varepsilon,\ \forall (t,x,u)\in Z,
\quad
s_{T,\psi}(x)\ge -\varepsilon_T,\ \forall x\in X.
\]

Generalizing Proposition~\ref{prop:prelim_approx_OM_lower_bound} to realized primal pairs $(\mu_\theta,\mu_{T,\theta})$, define the residual-corrected lower bound
\[
\underline J(\theta,\psi)
:=
\langle v_\psi,\mu_0\rangle
-\varepsilon\,\mu_\theta(Z)
-\varepsilon_T\,\mu_{T,\theta}(X)
-|\mathcal R_\theta(v_\psi)|
\]
and the corresponding certificate-relative gap
\[
\mathrm{Gap}(\theta,\psi)
:=
J(\theta)-\underline J(\theta,\psi).
\]

\begin{proposition}[Residual-aware gap decomposition]
\label{prop:fom_lower_bound_interface}
Let $v_\psi$ be $(\varepsilon,\varepsilon_T)$-feasible.
Then, for every trial parameter $\theta$ from any finite-dimensional realization class $\Theta^{(M)}$,
\begin{equation}
\label{eq:fomPrimalDualIdentity}
J(\theta)
=
\langle v_\psi,\mu_0\rangle
+\langle s_\psi,\mu_\theta\rangle
+\langle s_{T,\psi},\mu_{T,\theta}\rangle
+\mathcal R_\theta(v_\psi),
\end{equation}
hence
\[
\underline J(\theta,\psi)\le J(\theta),
\]
and
\begin{equation}
\label{eq:fomGapDecomposition}
\begin{aligned}
\mathrm{Gap}(\theta,\psi)
=\ &
\langle s_\psi+\varepsilon,\mu_\theta\rangle
+\langle s_{T,\psi}+\varepsilon_T,\mu_{T,\theta}\rangle\\
&+2[\mathcal R_\theta(v_\psi)]_+
\;\ge\;0,
\end{aligned}
\end{equation}
where $[a]_+:=\max\{a,0\}$.
\end{proposition}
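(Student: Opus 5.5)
The plan is a direct algebraic verification that needs no feasibility of $(\mu_\theta,\mu_{T,\theta})$. I will expand the slack pairings and let the residual $\mathcal R_\theta(v_\psi)$ absorb exactly the term that the weak Liouville relation cancelled in the exact gap identity~\eqref{eq:omPrimalDualGap}. I will prove the three claims in the order identity~\eqref{eq:fomPrimalDualIdentity}, then gap formula~\eqref{eq:fomGapDecomposition}, then the lower bound.

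\emph{Step 1 (identity).} Using the definitions~\eqref{eq:fomSlacks} and linearity of the pairing, I expand
\[
\langle s_\psi,\mu_\theta\rangle+\langle s_{T,\psi},\mu_{T,\theta}\rangle
= J(\theta)+\langle L_f v_\psi,\mu_\theta\rangle-\langle v_\psi,\mu_{T,\theta}\rangle .
\]
Here $\langle v_\psi(T,\cdot),\mu_{T,\theta}\rangle=\langle v_\psi,\mu_{T,\theta}\rangle$ because $\mu_{T,\theta}$ lives on $\{T\}\times X$. By~\eqref{eq:fomLiouvilleResidual}, adding $\mathcal R_\theta(v_\psi)=\langle v_\psi,\mu_{T,\theta}\rangle-\langle v_\psi,\mu_0\rangle-\langle L_f v_\psi,\mu_\theta\rangle$ cancels the $L_f$ and terminal terms. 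This leaves $J(\theta)-\langle v_\psi,\mu_0\rangle$, which rearranges to~\eqref{eq:fomPrimalDualIdentity}. The only care needed is that every pairing is finite. This is the standing assumption that the Liouvillian pairing is well defined on the induced measure class, together with continuity of $\ell,g,v_\psi$ and finite mass of the realized measures. The identity holds for every $\theta$, regardless of how the realization class $\Theta^{(M)}$ is built.

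\emph{Step 2 (gap formula).} I substitute~\eqref{eq:fomPrimalDualIdentity} into $\mathrm{Gap}(\theta,\psi)=J(\theta)-\underline J(\theta,\psi)$. This gives
\[
\mathrm{Gap}=\langle s_\psi,\mu_\theta\rangle+\varepsilon\,\mu_\theta(Z)+\langle s_{T,\psi},\mu_{T,\theta}\rangle+\varepsilon_T\,\mu_{T,\theta}(X)+\mathcal R_\theta(v_\psi)+|\mathcal R_\theta(v_\psi)|.
\]
Writing $\varepsilon\,\mu_\theta(Z)=\langle\varepsilon,\mu_\theta\rangle$ (and similarly for $\varepsilon_T$) and using $a+|a|=2[a]_+$ yields~\eqref{eq:fomGapDecomposition}.

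\emph{Step 3 (nonnegativity and lower bound).} $(\varepsilon,\varepsilon_T)$-feasibility gives $s_\psi+\varepsilon\ge0$ on $Z$ and $s_{T,\psi}+\varepsilon_T\ge0$ on $X$. The realized measures are nonnegative, so both pairings are $\ge0$, and $2[\cdot]_+\ge0$. Hence $\mathrm{Gap}\ge0$, which is exactly $\underline J(\theta,\psi)\le J(\theta)$.

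There is no real obstacle here. The only point deserving comment is Step 1: the exact-OM argument used Liouville feasibility to eliminate $\langle v,\mu_T-\mu_0\rangle-\langle L_fv,\mu\rangle$, whereas here that same quantity is retained as $\mathcal R_\theta(v_\psi)$. That is why the decomposition holds for arbitrary, possibly nonconvex, primal realizations.
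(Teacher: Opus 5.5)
Your proposal is correct and follows essentially the same route as the paper. You expand the slack pairings, let $\mathcal R_\theta(v_\psi)$ absorb the Liouville mismatch, subtract $\underline J(\theta,\psi)$, and use $a+|a|=2[a]_+$; the only cosmetic difference is that you obtain $\underline J(\theta,\psi)\le J(\theta)$ from $\mathrm{Gap}(\theta,\psi)\ge 0$, whereas the paper bounds the slack pairings directly before deriving the gap formula.
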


\begin{proof}
By definition,
\[
J(\theta)=\langle \ell,\mu_\theta\rangle+\langle g,\mu_{T,\theta}\rangle.
\]
Adding and subtracting $\langle L_fv_\psi,\mu_\theta\rangle$ and
$\langle v_\psi(T,\cdot),\mu_{T,\theta}\rangle$, and using the definition of
$\mathcal R_\theta(v_\psi)$, gives \eqref{eq:fomPrimalDualIdentity}.
Since $v_\psi$ is $(\varepsilon,\varepsilon_T)$-feasible,
\[
\langle s_\psi,\mu_\theta\rangle\ge -\varepsilon\,\mu_\theta(Z),
\qquad
\langle s_{T,\psi},\mu_{T,\theta}\rangle\ge -\varepsilon_T\,\mu_{T,\theta}(X).
\]
Substituting these bounds into \eqref{eq:fomPrimalDualIdentity} yields
$\underline J(\theta,\psi)\le J(\theta)$.
Subtracting $\underline J(\theta,\psi)$ from \eqref{eq:fomPrimalDualIdentity} gives
\[
\begin{aligned}
\mathrm{Gap}(\theta,\psi)
=\ &
\langle s_\psi+\varepsilon,\mu_\theta\rangle
+\langle s_{T,\psi}+\varepsilon_T,\mu_{T,\theta}\rangle\\
&+\mathcal R_\theta(v_\psi)+|\mathcal R_\theta(v_\psi)|,
\end{aligned}
\]
which is \eqref{eq:fomGapDecomposition} because
$|a|+a=2[a]_+$.
\end{proof}

The gap decomposition~\eqref{eq:fomGapDecomposition} separates two sources of suboptimality. The slack terms $\langle s_\psi, \mu_\theta\rangle + \langle s_{T,\psi}, \mu_{T,\theta}\rangle$ measure HJB violation, weighted by primal occupation. The residual term $\mathcal R_\theta(v_\psi)$ measures Liouville violation of the realized primal pair. At a fixed certificate, minimizing the gap drives primal mass away from high-slack regions. At a fixed primal pair, minimizing the gap drives the certificate slack toward zero on occupied regions.

\subsubsection*{Mechanism (a) Certificate-guided primal search}

Optimality in the exact OM LP requires complementary slackness $\langle s_\psi,\mu_\theta\rangle \approx 0$ and $\langle s_{T,\psi},\mu_{T,\theta}\rangle \approx 0$.
For any $(\varepsilon,\varepsilon_T)$-feasible certificate $v_\psi$, hence any near-optimal primal measure must be supported almost entirely on the near-zero-slack region, bridging the FOM framework with dual active-set methodology.

The shifted slack $\widehat s_\psi := s_\psi + \varepsilon$ is nonnegative for any $(\varepsilon,\varepsilon_T)$-feasible certificate. Given tolerance $\tau\ge0$, define the approximate dual active set
\begin{equation}
\label{eq:fomAdmissibleActionSet}
\mathcal U_{v_\psi}^{\tau}(t,x)
:= \{\, u\in U : \widehat s_\psi(t,x,u) \le \inf_{\bar u\in U}\widehat s_\psi(t,x,\bar u) + \tau \,\}
\end{equation}

Let $a_\theta(t,x)$ denote the pointwise action of $\theta$, covering both open-loop controls $u_\theta(t)$ and feedback policies $\kappa_\theta(t,x)$. For a finite decision set $\Xi\subset[t_0,T]\times X$, the certificate induces the admissible parameter set
\begin{equation}
\label{eq:fomCertificateAdmissibleTheta}
\mathcal A_{v_\psi}^{\tau}(\Xi)
:= \{\, \theta\in\Theta^{(M)} : a_\theta(t,x)\in\mathcal U_{v_\psi}^{\tau}(t,x),\ \forall(t,x)\in\Xi \,\}.
\end{equation}

The certificate-guided primal update restricts the search to $\mathcal A_{v_\psi}^{\tau}(\Xi)$:
\begin{equation}
\label{eq:fomPrimalUpdate}
\theta^{+}\in\arg\min_{\theta\in\mathcal A_{v_\psi}^{\tau}(\Xi)}
\{\, J(\theta) + \lambda\,r_M(\theta) \,\},
\quad \lambda\ge 0,
\end{equation}
where $r_M(\theta) := \Gamma_M(\mathcal R_\theta)$ is a realization-dependent residual monitor.

For the certificate-guided search to preserve optimality, $\tau$ must exceed a threshold set by the $C^1$ distance from $v_\psi$ to an optimal dual certificate.

\begin{proposition}[Dual active-set coverage]
\label{prop:dual_active_set_coverage}
Let $v^\star$ be an optimal dual certificate for the OM LP and $u^\star(t,x)$ an associated optimal feedback. If $\|v_\psi - v^\star\|_{C^1} \le \delta$, then $u^\star(t,x) \in \mathcal U_{v_\psi}^\tau(t,x)$ for every $(t,x)$ whenever
\[
\tau \ge 2(1+\|f\|_\infty)\,\delta.
\]
\end{proposition}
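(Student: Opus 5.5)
The plan is a perturbation comparison between the slack of $v_\psi$ and the slack of $v^\star$. Write $s^\star:=\ell+L_fv^\star$ for the running HJB slack of the optimal certificate. By definition of an associated optimal feedback, $u^\star(t,x)$ attains the pointwise minimum of $s^\star(t,x,\cdot)$ over $U$; that is,
\[
s^\star(t,x,u^\star(t,x))=\inf_{\bar u\in U}s^\star(t,x,\bar u).
\]
This is the HJB minimization characterizing the optimal feedback. I take it as the defining property of ``associated''. If only complementary slackness along optimal trajectories is available, the argument below applies on the occupied set.

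The key step is a uniform bound on the slack difference. Since
\[
s_\psi-s^\star=L_f(v_\psi-v^\star)=\partial_t(v_\psi-v^\star)+\nabla_x(v_\psi-v^\star)^\top f,
\]
the Cauchy--Schwarz inequality gives, for every $(t,x,u)\in Z$,
\[
|s_\psi(t,x,u)-s^\star(t,x,u)|\le \|\partial_t(v_\psi-v^\star)\|_\infty+\|\nabla_x(v_\psi-v^\star)\|_\infty\|f\|_\infty\le(1+\|f\|_\infty)\delta .
\]
Here I interpret $\|\cdot\|_{C^1}$ as dominating both the sup of the time derivative and the sup of the spatial gradient norm, with $\|f\|_\infty$ the sup of $|f|$ over $Z$. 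The shift by $\varepsilon$ is a constant in $u$. It therefore cancels in the definition of $\mathcal U^\tau_{v_\psi}$, so one may work with $s_\psi$ directly.

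The conclusion then follows from a two-sided comparison. Abbreviate $u^\star=u^\star(t,x)$. For any $\bar u\in U$,
\[
s_\psi(t,x,u^\star)\le s^\star(t,x,u^\star)+(1+\|f\|_\infty)\delta\le s^\star(t,x,\bar u)+(1+\|f\|_\infty)\delta\le s_\psi(t,x,\bar u)+2(1+\|f\|_\infty)\delta .
\]
Taking the infimum over $\bar u$ gives
\[
\widehat s_\psi(t,x,u^\star)\le \inf_{\bar u}\widehat s_\psi(t,x,\bar u)+2(1+\|f\|_\infty)\delta\le\inf_{\bar u}\widehat s_\psi(t,x,\bar u)+\tau ,
\]
which is exactly $u^\star(t,x)\in\mathcal U^\tau_{v_\psi}(t,x)$. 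The factor $2$ comes from using the perturbation bound once at $u^\star$ and once at the competitor $\bar u$.

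The main obstacle is not the estimate but the regularity and meaning of $v^\star$ and $u^\star$. The dual supremum need not be attained in $C^1$, so the statement must presuppose that an optimal $C^1$ certificate exists. The norm $\|v_\psi-v^\star\|_{C^1}$ must also be finite on $[t_0,T]\times X$, which holds under the compactness assumptions used for strong duality. I would state explicitly that $u^\star$ is taken to be a pointwise minimizer of $\ell+L_fv^\star$. I would also note that $\|f\|_\infty<\infty$ follows from continuity of $f$ on the compact set $Z$. With these conventions fixed, the argument is the short chain above.
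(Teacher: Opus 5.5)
Your proposal is correct and is essentially the paper's argument: the same uniform bound $|s_\psi-s^\star|\le(1+\|f\|_\infty)\delta$ is used twice, once at $u^\star$ and once on the infimum side, and the $\varepsilon$-shift cancels. The only difference is that you use just that $u^\star$ minimizes $s^\star(t,x,\cdot)$, whereas the paper invokes $s^\star(t,x,u^\star)=0=\inf_u s^\star$ via complementary slackness; your hypothesis is slightly weaker and avoids the point that complementary slackness forces zero slack only on the support of the optimal measure.
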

\begin{proof}
From $\|v_\psi - v^\star\|_{C^1} \le \delta$,
\[
\begin{aligned}
|s_\psi(t,x,u) - s^\star(t,x,u)|
&= |L_f(v_\psi - v^\star)(t,x,u)|\\
&\le (1+\|f\|_\infty)\delta =: C\delta.
\end{aligned}
\]
Optimality of $v^\star$ and complementary slackness give $s^\star(t,x,u^\star(t,x)) = 0 = \inf_u s^\star(t,x,u)$. Hence
\[
s_\psi(t,x,u^\star(t,x)) \le C\delta,
\qquad
\inf_u s_\psi(t,x,u) \ge -C\delta.
\]
For $\widehat s_\psi = s_\psi + \varepsilon$, the same bounds yield $\widehat s_\psi(t,x,u^\star) \le C\delta + \varepsilon$ and $\inf_u \widehat s_\psi \ge \varepsilon - C\delta$. Hence
\[
\widehat s_\psi(t,x,u^\star) - \inf_u \widehat s_\psi
\le (C\delta + \varepsilon) - (\varepsilon - C\delta) = 2C\delta,
\]
independently of $\varepsilon$. Thus $u^\star(t,x) \in \mathcal U_{v_\psi}^\tau(t,x)$ for $\tau \ge 2C\delta$.
\end{proof}

Proposition~\ref{prop:dual_active_set_coverage} identifies $\tau$ as a \emph{dual-confidence radius}. Certificate tightening (Mechanism~(b)) reduces $\delta$, hence $\tau$. In the limit $\delta \to 0$, $\tau \to 0$ and $\mathcal U_v^\tau$ recovers the Bellman-optimal action set.

When only $(\varepsilon,\varepsilon_T)$-feasibility is available, $\delta$ is uncontrolled and no finite $\tau$ guarantees coverage. A cumulative query along a prefix removes this $\delta$-dependency.

\begin{corollary}[Cumulative lower-bound pruning]
\label{cor:cumulative_lower_bound_pruning}
Let $v_\psi$ be $(\varepsilon,\varepsilon_T)$-feasible and let $(\mu,\mu_T)\in\mathcal M$ satisfy the weak Liouville equation. Decompose $\mu = \mu_{\le t} + \mu_{>t}$ at any $t\in[t_0,T]$, where $\mu_{\le t}$ is the restriction to $[t_0,t]\times X\times U$ and $\mu_t$ is the induced state marginal at~$t$. Define
\[
B_\psi(\mu_{\le t}) := \langle \ell, \mu_{\le t}\rangle + \langle v_\psi(t,\cdot), \mu_t\rangle - \varepsilon(T-t)\mu_t(X) - \varepsilon_T\mu_t(X).
\]
Then $\langle \ell, \mu\rangle + \langle g, \mu_T\rangle \ge B_\psi(\mu_{\le t})$. Consequently, if $B_\psi(\mu_{\le t})$ exceeds the cost of any known feasible pair, no completion of $\mu_{\le t}$ can improve on that pair and the prefix can be pruned.
\end{corollary}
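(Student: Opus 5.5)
The plan is to reproduce the gap argument of Proposition~\ref{prop:prelim_approx_OM_lower_bound} on the tail interval $[t,T]$ alone, treating the state marginal $\mu_t$ as a new initial measure. Throughout I use the same mass identity as in that proposition: for a deterministic feasible pair, each unit of mass occupies one unit of time. Hence the tail satisfies $\mu_{>t}(Z)=(T-t)\,\mu_t(X)$ and $\mu_T(X)=\mu_t(X)$.

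\textbf{Step 1 (split Liouville).} I first need $\mu_t$ to be well defined and to satisfy a split Liouville identity. For every $w\in\mathcal V$,
\[
\langle w(t,\cdot),\mu_t\rangle-\langle w,\mu_0\rangle=\langle L_f w,\mu_{\le t}\rangle,\qquad
\langle w,\mu_T\rangle-\langle w(t,\cdot),\mu_t\rangle=\langle L_f w,\mu_{>t}\rangle .
\]
For the induced (superposition of) trajectory measures this is just the chain rule integrated over $[t_0,t]$ and $[t,T]$, as in \eqref{eq:liouvilleTransportIdentity}.

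For a general weakly feasible $(\mu,\mu_T)$, I would obtain it by testing the global relation \eqref{eq:liouvilleConstraint} with $w\,\chi_k(s)$. Here $\chi_k$ is a smooth time cutoff approximating $\mathbf 1_{[t,T]}$. The term $\chi_k'\,w$ then concentrates and defines $\mu_t$ as a weak limit, which in fact serves as its definition. This step is the main technical obstacle. Jumps of the time marginal at $t$ could in principle cause trouble, but the time marginal of $\mu$ is absolutely continuous (it is $\mu_0(X)$ times Lebesgue measure, since testing with $w=w(s)$ forces this). So I expect the limit to exist and to be independent of one-sided conventions. If needed, I would restrict to the trajectory-induced class, which is all the pruning application requires.

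\textbf{Step 2 (tail gap).} I apply the tail identity with $w=v_\psi$ and add and subtract, exactly as in \eqref{eq:omPrimalDualGap}. This gives
\[
\langle \ell,\mu_{>t}\rangle+\langle g,\mu_T\rangle-\langle v_\psi(t,\cdot),\mu_t\rangle
=\langle s_\psi,\mu_{>t}\rangle+\langle s_{T,\psi},\mu_T\rangle .
\]
By $(\varepsilon,\varepsilon_T)$-feasibility, the right-hand side is at least $-\varepsilon\,\mu_{>t}(Z)-\varepsilon_T\,\mu_T(X)$. The mass identities above turn this into $-\varepsilon(T-t)\mu_t(X)-\varepsilon_T\mu_t(X)$.

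\textbf{Step 3 (add the prefix).} Adding the prefix cost $\langle\ell,\mu_{\le t}\rangle$ to both sides gives $\langle\ell,\mu\rangle+\langle g,\mu_T\rangle\ge B_\psi(\mu_{\le t})$. The boundary slice $\{t\}\times X\times U$ carries no $\mu$-mass, so the split $\mu=\mu_{\le t}+\mu_{>t}$ double-counts nothing.

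\textbf{Step 4 (pruning).} The pruning consequence is then immediate. The bound depends only on $\mu_{\le t}$, through the prefix cost and its terminal marginal $\mu_t$, and not on the completion. So any completion has cost at least $B_\psi(\mu_{\le t})$, which exceeds the known feasible cost.
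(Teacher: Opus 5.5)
Your proposal is correct and follows the same route as the paper, which simply applies Proposition~\ref{prop:prelim_approx_OM_lower_bound} on $[t,T]$ with initial measure $\mu_t$ and then adds the prefix cost $\langle \ell,\mu_{\le t}\rangle$. Your Steps~1--2 also supply what the paper's one-line proof takes for granted: that $\mu_t$ is well defined, that the tail $(\mu_{>t},\mu_T)$ is Liouville-feasible from $(t,\mu_t)$, and that $\mu_{>t}(Z)=(T-t)\mu_t(X)$ and $\mu_T(X)=\mu_t(X)$. Your time-cutoff argument, which uses the absolutely continuous time marginal, does go through.
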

\begin{proof}
Proposition~\ref{prop:prelim_approx_OM_lower_bound} applied from $t$ with initial measure $\mu_t$ bounds the suffix cost by $\langle v_\psi(t,\cdot), \mu_t\rangle - \varepsilon(T-t)\mu_t(X) - \varepsilon_T\mu_t(X)$. Adding $\langle \ell, \mu_{\le t}\rangle$ yields the claim.
\end{proof}

Proposition~\ref{prop:dual_active_set_coverage} queries the certificate alone. 
Without primal information, safety requires $C^1$ proximity to $v^\star$, hence the $\delta$-dependency. Corollary~\ref{cor:cumulative_lower_bound_pruning} queries the certificate together with a primal prefix $\mu_{\le t}$. With the prefix, $(\varepsilon,\varepsilon_T)$-feasibility suffices and $\delta$ is 
irrelevant.

For a deterministic trajectory $\mu_0 = \delta_{x_0}$, the restriction $\mu_{\le t}$ corresponds to a prefix segment with accumulated cost $J_{\mathrm{pre}}(t)$ and state marginal $\mu_t = \delta_{x(t)}$. 
Then $B_\psi$ reduces to $J_{\mathrm{pre}}(t) + v_\psi(t,x(t)) - \varepsilon(T-t) - \varepsilon_T$, and along a smooth prefix $\dot B_\psi = \widehat s_\psi$. 

\begin{remark}[Incumbent-based pruning as a specialization]
\label{rem:incumbent_specialization}
When a complete incumbent trajectory with cost $J^\star$ is available, Corollary~\ref{cor:cumulative_lower_bound_pruning} yields a safe pruning test: if $B_\psi(t) > J^\star$, no completion of the prefix can improve on the incumbent. This incumbent-based pruning is the continuous-time analogue of cost-to-come plus admissible heuristic in A$^\ast$~\cite{hart1968formal} and LAO$^\ast$~\cite{hansen2001lao}; see also \cite{salzman2015asymptotically}.
\end{remark}

\subsubsection*{Mechanism (b) Certificate tightening}

The complementary mechanism fixes the primal pair and tightens the certificate:
\begin{equation}
\label{eq:fomDualUpdate}
\begin{aligned}
    \min_\psi\quad &
    \langle s_\psi,\mu_\theta\rangle
    + \langle s_{T,\psi},\mu_{T,\theta}\rangle \\
    \text{s.t.}\quad &
    s_\psi(t,x,u)\ge -\varepsilon,
    \qquad \forall (t,x,u)\in Z,\\
    &
    s_{T,\psi}(x)\ge -\varepsilon_T,
    \qquad \forall x\in X.
\end{aligned}
\end{equation}

Depending on the choice of the reference measure $\mu_\theta$:

\textit{1. Primal-informed tightening.} $\mu_\theta$ is the current realized pair,
aligning certificate improvement with regions visited by the search.

\textit{2. Independent dual synthesis.} $\mu_\theta$ is replaced by an arbitrary reference measure, decoupling certificate construction from the primal search.
A naive realization is the semi-Lagrangian recursion \cite{falcone2014semi}. It takes $\mu_\theta$ as the Lebesgue measure on a space--time grid, turning \eqref{eq:fomDualUpdate} into a grid sum of slacks.
Exploiting temporal causality, a backward-in-time sweep with monotone interpolation enforces discrete dual feasibility and drives the slack to zero at each grid point.

\begin{remark}[Dynamic programming as zero-gap zero-tolerance primal-dual FOM]
    \label{rem:dp_zero_gap}
    Dynamic programming computes the exact value function via sequential Bellman backups; at convergence the HJB inequality becomes tight.
    Consequently, at zero tolerance ($\tau=0$), the certificate-induced admissibility set \eqref{eq:fomAdmissibleActionSet} collapses to the Bellman-optimal action set. Dynamic programming is thus the exact, sequentially closed limit of the FOM principle.

    General FOM relaxes this ideal by replacing the exact value function with an approximate certificate, and the exact Bellman-optimal action set with the tolerance-based envelope $\mathcal U_{v_\psi}^{\tau}$. This retains the structural local guidance even when an exact value function is unavailable, so certificates can be constructed in a non-sequential manner. 

    DDP/SQP Hessians and CMA-ES/MPPI search covariances also restrict exploration locally (trust regions, ellipsoids). But unlike certificates, they carry no global lower-bound guarantee.
\end{remark}

\subsection{Explicit FOM and its asymptotic consistency}
\label{subsec:explicit_fom_consistency}

We first consider the realization in which primal feasibility is represented \emph{explicitly} through a finite weak Liouville test family.
This is the canonical Eulerian form of FOM. The transport constraints remain visible at finite resolution.
It separates two approximation layers, finite trial expressivity on the primal side and weak-form refinement on the transport side. The certificate class remains an independent finite-dimensional realization of HJB-type dual objects.

Since enforcing the full Liouville residual functional $\mathcal R_\theta(\cdot)$ is typically intractable, primal feasibility is monitored through a finite test family
\[
\mathcal V_m=\mathrm{span}\{v_1,\dots,v_m\}\subset\mathcal V.
\]
Given a finite-capacity trial family $\Theta^{(M)}$, each parameter $\theta\in\Theta^{(M)}$ induces a global primal pair $(\mu_\theta,\mu_{T,\theta})\in\mathcal M$ with realized cost $J(\theta):=\langle \ell,\mu_\theta\rangle+\langle g,\mu_{T,\theta}\rangle$.

Relative to the chosen test basis, the explicit residual vector is defined by
\begin{equation}
\label{eq:fomResidualVector}
r_\theta^{(m)}
:=
\Big(
\langle v_j,\mu_{T,\theta}-\mu_0\rangle
-
\langle L_f v_j,\mu_\theta\rangle
\Big)_{j=1}^m
\in \mathbb R^m,
\end{equation}
whose magnitude is quantified by a norm $\|\cdot\|_{\ast,m}$ on $\mathbb R^m$.

This yields the explicit restricted feasible family and its associated value
\[ 
\begin{aligned}
&\mathcal F^{\mathrm{exp}}_{m,M}(\eta) := \Bigl\{ \theta\in\Theta^{(M)} :\; \|r_\theta^{(m)}\|_{\ast,m}\le \eta \Bigr\}, \\
&P^{\mathrm{exp}}_{m,M}(\eta) := \inf_{\theta\in\mathcal F^{\mathrm{exp}}_{m,M}(\eta)} J(\theta). 
\end{aligned}
\] 
In the exact measure space, their infinite-dimensional counterparts are the $m$-truncated feasible set and its corresponding value 
\[
\begin{aligned}
&\mathcal F_m := \Bigl\{ (\mu,\mu_T)\in \mathcal M : \langle v,\mu_T-\mu_0\rangle=\langle L_f v,\mu\rangle,\ \forall v\in\mathcal V_m \Bigr\}, \\
&P_m := \inf_{(\mu,\mu_T)\in\mathcal F_m} J(\mu,\mu_T).
\end{aligned}
\]

The explicit FOM separates two sources of error. 1. \emph{Representation approximation}: For a fixed test capacity $m$, the finite trial family $\Theta^{(M)}$ approximates the truncated feasible set $\mathcal F_m$ as $M\to\infty$. 2. \emph{Weak-form refinement}: As the test family $\mathcal V_m$ is enriched ($m\to\infty$), the truncated Liouville formulation recovers the exact OM LP.

The test family $\mathcal V_m$ (for evaluating primal residuals) and the certificate class $\mathcal V_\Psi$ (for bounding HJB subsolutions) serve distinct roles. They may coincide in certain frameworks (e.g., sharing a polynomial basis in Moment--SOS relaxations) but are independent in general.

An explicit FOM is specified by three design conditions, all under the practitioner's control:
\begin{enumerate}
    \item \textbf{Nested capacity.} $\Theta^{(M)}\subseteq\Theta^{(M+1)}$ for all $M\ge 1$. Increasing $M$ enriches the trial family without discarding previously attainable realizations.
    \item \textbf{Trial density.} For each fixed $m\ge 1$, every $(\mu,\mu_T)\in\mathcal F_m$ is the weak-$\star$ limit of some sequence $(\mu_{\theta_M},\mu_{T,\theta_M})$ from $\Theta^{(M)}$ with $\|r_{\theta_M}^{(m)}\|_{\ast,m}\to 0$. This is met by choosing a sufficiently expressive trial parameterization.
    \item \textbf{Test density.} $\overline{\bigcup_{m\ge 1}\mathcal V_m}^{\,C^1}=\mathcal V$. This is met by enriching the test basis.
\end{enumerate}
Throughout, we assume $X$ and $U$ are compact, $\ell$, $g$, $L_f v$ (for all $v\in C^1$) are continuous, and the trial-induced measures have uniformly bounded total mass. These guarantee weak-$\star$ relative compactness of the induced measures and weak-$\star$ continuity of the cost functionals.

\begin{theorem}[Asymptotic consistency of explicit FOM]
\label{thm:explicit_fom_consistency}
Under design conditions~(i) and~(ii),
\[
\lim_{\eta\downarrow 0}\ \lim_{M\to\infty} P^{\mathrm{exp}}_{m,M}(\eta)=P_m
\qquad\text{for every fixed }m\ge 1.
\]
If additionally design condition~(iii) holds and the OM LP satisfies $P_m\to P$,
\[
\lim_{m\to\infty}\ \lim_{\eta\downarrow 0}\ \lim_{M\to\infty} P^{\mathrm{exp}}_{m,M}(\eta)=P.
\]
\end{theorem}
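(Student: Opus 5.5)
Fix $m$ and write $Q_M(\eta):=P^{\mathrm{exp}}_{m,M}(\eta)$. The plan is a sandwich argument: an upper bound from trial density (ii) and a lower bound from weak-$\star$ compactness together with continuity of the finitely many residual functionals.

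\emph{Monotonicity.} By nested capacity (i), $\mathcal F^{\mathrm{exp}}_{m,M}(\eta)\subseteq\mathcal F^{\mathrm{exp}}_{m,M+1}(\eta)$, so $Q_M(\eta)$ is nonincreasing in $M$. It is also nondecreasing as $\eta\downarrow0$, since the feasible set shrinks. Hence $\lim_M Q_M(\eta)=\inf_M Q_M(\eta)=:Q_\infty(\eta)$ exists in $[-\infty,\infty)$, and $\lim_{\eta\downarrow0}Q_\infty(\eta)$ exists as a monotone limit.

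\emph{Upper bound.} Fix $\epsilon>0$ and pick $(\mu,\mu_T)\in\mathcal F_m$ with $J(\mu,\mu_T)\le P_m+\epsilon$. By (ii) there are $\theta_M\in\Theta^{(M)}$ with $(\mu_{\theta_M},\mu_{T,\theta_M})\to(\mu,\mu_T)$ weak-$\star$ and $\|r^{(m)}_{\theta_M}\|_{\ast,m}\to0$. For every $\eta>0$ we therefore have $\theta_M\in\mathcal F^{\mathrm{exp}}_{m,M}(\eta)$ for all large $M$. Since $\ell$ and $g$ are continuous on compact sets, $J$ is weak-$\star$ continuous, which gives $Q_\infty(\eta)\le\lim_M J(\theta_M)=J(\mu,\mu_T)\le P_m+\epsilon$. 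Consequently $\lim_\eta Q_\infty(\eta)\le P_m$.

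\emph{Lower bound (main step).} Take $\eta_k\downarrow0$ and choose $M_k$ and $\theta_k\in\mathcal F^{\mathrm{exp}}_{m,M_k}(\eta_k)$ with $J(\theta_k)\le Q_\infty(\eta_k)+1/k$; this choice is possible because $Q_\infty(\eta_k)=\inf_M Q_M(\eta_k)$. Uniformly bounded mass and compactness of $Z$ give, by Banach--Alaoglu, a weak-$\star$ convergent subsequence $(\mu_{\theta_k},\mu_{T,\theta_k})\to(\bar\mu,\bar\mu_T)\in\mathcal M$; positivity is preserved under weak-$\star$ limits. Each component $\theta\mapsto\langle v_j,\mu_{T,\theta}-\mu_0\rangle-\langle L_fv_j,\mu_\theta\rangle$ is weak-$\star$ continuous, because $v_j$ and $L_fv_j$ are continuous on compact sets. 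Since $\mathcal V_m$ is finite-dimensional, all norms on $\mathbb R^m$ are equivalent, so $r^{(m)}_{\theta_k}\to0$ implies that the limit satisfies the Liouville identities for every $v_j$, and by linearity for all of $\mathcal V_m$. Thus $(\bar\mu,\bar\mu_T)\in\mathcal F_m$, and $P_m\le J(\bar\mu,\bar\mu_T)=\lim_k J(\theta_k)\le\lim_\eta Q_\infty(\eta)$. This also rules out the value $-\infty$, since $J$ is bounded below on bounded-mass sets.

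The hard point is exactly this step. Passing to the limit requires the residual to be a finite collection of weak-$\star$ continuous functionals, and the argument breaks if $m=\infty$. That is why the $\eta$-limit is taken before the $m$-limit. Combining the two bounds proves the first claim.

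\emph{Second claim.} Once the inner limits equal $P_m$, the statement reduces to $\lim_m P_m=P$, which is assumed. Condition (iii) explains why this assumption is natural. The sets $\mathcal F_m$ are nested only if the $\mathcal V_m$ are, but in any case $\mathcal F_m\supseteq\mathcal F$, so $P_m\le P$. Moreover, any weak-$\star$ cluster point of near-minimizers of $P_m$ satisfies the Liouville identity on $\bigcup_m\mathcal V_m$. By (iii) and the estimate $|\langle L_f v,\mu\rangle|\le C\|v\|_{C^1}\mu(Z)$, which holds under bounded mass, the identity extends to all of $\mathcal V$, so $\liminf_m P_m\ge P$. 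I would note this, but in the proof only invoke the stated hypothesis $P_m\to P$ to conclude.
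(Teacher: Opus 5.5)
Your proposal is correct and follows essentially the same sandwich argument as the paper. The upper bound comes from trial density (ii) together with weak-$\star$ continuity of $J$, the lower bound from weak-$\star$ compactness plus continuity of the $m$ residual components (which forces the limit into $\mathcal F_m$), and the second claim is read off from the assumed $P_m\to P$; your use of nested capacity (i) to make $P^{\mathrm{exp}}_{m,M}(\eta)$ nonincreasing in $M$ is a mild improvement, since it guarantees the inner limit exists and simplifies the diagonal choice of $\theta_k$, whereas the paper works only with $\limsup$/$\liminf$ and never actually invokes (i).
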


\begin{proof}
Fix $m$ and write $P_{m,M}(\eta):=P^{\mathrm{exp}}_{m,M}(\eta)$.

We first prove $\lim_{\eta\downarrow 0}\ \lim_{M\to\infty} P_{m,M}(\eta)=P_m$ by matching upper and lower bounds.

For the upper bound, fix $\varepsilon>0$ and choose
$(\mu^\varepsilon,\mu_T^\varepsilon)\in\mathcal F_m$ such that
\[
\langle \ell,\mu^\varepsilon\rangle+\langle g,\mu_T^\varepsilon\rangle\le P_m+\varepsilon.
\]
By asymptotic density of the explicit trial family, there exists a sequence
$\theta_M^\varepsilon\in\Theta^{(M)}$ such that
\[
(\mu_{\theta_M^\varepsilon},\mu_{T,\theta_M^\varepsilon}) \rightharpoonup^\star (\mu^\varepsilon,\mu_T^\varepsilon),
\qquad
\|r_{\theta_M^\varepsilon}^{(m)}\|_{\ast,m}\to 0.
\]
Now fix $\eta>0$.
Since $\|r_{\theta_M^\varepsilon}^{(m)}\|_{\ast,m}\to 0$, there exists $M_1(\eta,\varepsilon)$ such that
\[
\|r_{\theta_M^\varepsilon}^{(m)}\|_{\ast,m}\le \eta
\qquad \forall M\ge M_1(\eta,\varepsilon).
\]
Since the cost functional is weak-$^\star$ continuous on the relevant feasible set,
\[
J(\theta_M^\varepsilon)\to
\langle \ell,\mu^\varepsilon\rangle+\langle g,\mu_T^\varepsilon\rangle,
\]
so there exists $M_2(\varepsilon)$ such that
\[
J(\theta_M^\varepsilon)\le
\langle \ell,\mu^\varepsilon\rangle+\langle g,\mu_T^\varepsilon\rangle+\varepsilon
\le P_m+2\varepsilon
\qquad \forall M\ge M_2(\varepsilon).
\]
Hence, for every $M\ge \max\{M_1(\eta,\varepsilon),M_2(\varepsilon)\}$,
the parameter $\theta_M^\varepsilon$ is feasible for $P_{m,M}(\eta)$ and satisfies
\[
P_{m,M}(\eta)\le J(\theta_M^\varepsilon)\le P_m+2\varepsilon.
\]
Therefore
\[
\limsup_{M\to\infty} P_{m,M}(\eta)\le P_m+2\varepsilon
\qquad \forall \eta>0.
\]
Taking $\limsup_{\eta\downarrow 0}$ and then letting $\varepsilon\downarrow 0$ yields
\[
\limsup_{\eta\downarrow 0}\ \limsup_{M\to\infty} P_{m,M}(\eta)\le P_m.
\]

For the lower bound, set
\[
L_m:=\liminf_{\eta\downarrow 0}\ \liminf_{M\to\infty} P_{m,M}(\eta).
\]
Choose a sequence $\eta_n\downarrow 0$ such that
\[
a_n:=\liminf_{M\to\infty} P_{m,M}(\eta_n)\to L_m.
\]
For each $n$, choose $M_n\ge n$ such that
\[
P_{m,M_n}(\eta_n)\le a_n+\frac{1}{n}.
\]
By definition of the infimum, choose $\theta_n\in\Theta^{(M_n)}$ satisfying
\[
\|r_{\theta_n}^{(m)}\|_{\ast,m}\le \eta_n,
\quad
J(\theta_n)\le P_{m,M_n}(\eta_n)+\frac{1}{n}\le a_n+\frac{2}{n}.
\]
It follows that
\[
\limsup_{n\to\infty} J(\theta_n)\le L_m.
\]

By weak-$^\star$ relative compactness, after passing to a subsequence we may assume
\[
(\mu_{\theta_n},\mu_{T,\theta_n}) \rightharpoonup^\star (\bar\mu,\bar\mu_T).
\]
Because $\|r_{\theta_n}^{(m)}\|_{\ast,m}\to 0$ in the finite-dimensional space $\mathbb R^m$,
each component of $r_{\theta_n}^{(m)}$ converges to zero.
Thus, for every basis element $v_j$ of $\mathcal V_m$,
\[
\langle v_j,\mu_{T,\theta_n}-\mu_0\rangle-\langle L_f v_j,\mu_{\theta_n}\rangle\to 0.
\]
Passing to the limit by weak-$^\star$ continuity of the pairings gives
\[
\langle v_j,\bar\mu_T-\mu_0\rangle=\langle L_f v_j,\bar\mu\rangle,
\qquad j=1,\dots,m.
\]
By linearity, the same holds for every $v\in\mathcal V_m$, so
$(\bar\mu,\bar\mu_T)\in\mathcal F_m$.
Consequently, by the weak-$^\star$ continuity of the cost functional, we obtain the lower bound
\[ 
    P_m \le \langle \ell,\bar\mu\rangle+\langle g,\bar\mu_T\rangle = \lim_{n\to\infty} J(\theta_n) \le \limsup_{n\to\infty} J(\theta_n) \le L_m.
\]
Hence $P_m\le L_m$, i.e.,
\[
P_m\le \liminf_{\eta\downarrow 0}\ \liminf_{M\to\infty} P_{m,M}(\eta).
\]

Combining the upper and lower bounds yields
\[
\lim_{\eta\downarrow 0}\ \lim_{M\to\infty} P_{m,M}(\eta)=P_m.
\]
The second claim now follows immediately by taking $m\to\infty$ and using the assumed
truncation consistency $P_m\to P$.
\end{proof}

Theorem 1 shows that convergence decouples into two independent approximations: the density of the trial family for a fixed test family (fixed $m$), and the subsequent refinement of the test family ($m\to\infty$).

\begin{example}[A deterministic optimal-control realization of explicit FOM]
    \label{ex:explicit_fom_deterministic_oc}
    Consider an optimal-control problem with initial density $\rho_0$.
    Parameterize the control by a feedback kernel $\pi_\theta(\cdot\mid t,x)\in\mathcal P(U)$, inducing the averaged drift
    \[
    b_\theta(t,x):=\int_U f(t,x,u)\,\pi_\theta(du\mid t,x).
    \]
    The induced density $\rho_\theta$ satisfies the Liouville equation
    $\partial_t\rho_\theta+\nabla_x\cdot(b_\theta\rho_\theta)=0$,
    $\rho_\theta(t_0,\cdot)=\rho_0$.
    Rather than solving this PDE, one directly parameterizes $\rho_\theta(t,x)$ alongside $\pi_\theta$ and enforces primal feasibility weakly, akin to a Galerkin projection.

    With finite test space $\mathcal V_m=\mathrm{span}\{v_1,\dots,v_m\}$, the weak Liouville residual projects onto this basis, yielding the residual vector $r_\theta^{(m)}\in\mathbb R^m$ of~\eqref{eq:fomResidualVector}.
    At fixed $m$, the primal step takes the penalized form
    \[
    \min_{\theta\in\Theta^{(M)}}
    \Big\{J(\theta)+\lambda\|r_\theta^{(m)}\|_{\ast,m}\Big\}.
    \]
    The certificate update~\eqref{eq:fomDualUpdate} can be performed independently, as discussed in Section~\ref{subsec:fom_certificate_mechanisms}.
\end{example}

\begin{remark}[Moment--SOS relaxations as explicit FOM]
    \label{rem:moment_sos_explicit_fom}
    Moment--SOS relaxations~\cite{lasserre2008nonlinear} realize explicit FOM for polynomial optimal control as a convex SDP: they operate directly on $(\mu,\mu_T)$ (i.e., take $M\to\infty$) and introduce finiteness via moment truncation. 
    At a fixed order, the primal weak Liouville equation tested against monomials $v(t,x)=t^p x^\alpha$ gives linear moment constraints, with measure positivity and support enforced by moment and localizing SDP constraints; 
    the dual certificate $v$ is a polynomial, and $\ell+L_fv\ge0$, $g-v(T,\cdot)\ge0$ are enforced by SOS representations. 
    This replaces the certificate--primal mechanisms~\eqref{eq:fomDualUpdate}--\eqref{eq:fomPrimalUpdate} by a primal--dual conic program; under the Archimedean condition, increasing the order yields asymptotic consistency with the OM LP.
\end{remark}

\subsection{Implicit FOM and its asymptotic consistency}
\label{subsec:implicit_fom_consistency}

We next consider implicit FOM realizations based on rollouts.
This realization generates local dynamics directly on each time segment and measures global inconsistency through interface defects and local rollout residuals.
It recovers single shooting, multiple shooting, and general segmented simulation as special cases.

Fix a time-domain partition
\[
\Pi:=\{\tau_0=t_0<\tau_1<\cdots<\tau_K=T\}.
\]
For each finite capacity level $M$, let $\Theta_{\Pi}^{(M)}$ denote a finite-dimensional segmented-rollout family.
That is, each parameter $\theta$ specifies a segmentwise simulation-based realization, including the local rollout objects on each segment and their inter-segment coupling data.

For every parameter $\theta\in\Theta_{\Pi}^{(M)}$, the segmentwise realization produces a local occupation measure alongside its corresponding initial and terminal endpoint distributions:
\[
\begin{aligned}
&\mu_{\theta,k} \in \mathcal M_+([\tau_k,\tau_{k+1}]\times X\times U), \\
&\nu_{\theta,k}^{+},\ \nu_{\theta,k+1}^{-} \in \mathcal M_+(X),
\end{aligned}
\quad k=0,\dots,K-1.
\]
Here $\nu_{\theta,k}^{-}$ and $\nu_{\theta,k}^{+}$ are the state marginals at the $k$-th interface $\tau_k$, arriving from the preceding segment and entering the succeeding segment, respectively. The first such interface is $\tau_0$, where $\nu_{\theta,0}^{+}=\mu_0$.

The induced global primal pair is then
\[
\mu_\theta:=\sum_{k=0}^{K-1}\mu_{\theta,k},
\qquad
\mu_{T,\theta}:=\delta_T\otimes \nu_{\theta,K}^{-},
\]
The realized cost is
\[
J(\theta):=\langle \ell,\mu_\theta\rangle+\langle g,\mu_{T,\theta}\rangle.
\]

For the implicit realization, the residual mechanism relies on two sources of inconsistency.
The first is the \emph{interface defect}. For each interface $\tau_k$, define
\[
d_{\theta,k}
:=
\nu_{\theta,k}^{+}-\nu_{\theta,k}^{-},
\qquad k=1,\dots,K-1.
\]
This quantifies the mismatch between adjacent segments.

The second is the \emph{local rollout residual}.
For each $v\in\mathcal V$, define
\[
\begin{aligned}
e_{\theta,k}(v)
:=
\langle v(\tau_{k+1},\cdot),\nu_{\theta,k+1}^{-}\rangle
-
\langle v(\tau_k,\cdot),\nu_{\theta,k}^{+}\rangle
-
\langle L_f v,\mu_{\theta,k}\rangle,\\
k=0,\dots,K-1.
\end{aligned}
\]
This is the weak Liouville defect of the local rollout on the $k$-th segment.
For an exact segmentwise rollout, one has $e_{\theta,k}(v)=0$ for every $v$.
For a numerical ODE integrator, this term measures the weak integration error on the
$k$-th segment.

These two objects recover the global weak residual. Defining
\[
\mathcal R_\theta(v)
:=
\langle v,\mu_{T,\theta}-\mu_0\rangle
-
\langle L_f v,\mu_\theta\rangle,
\]
we have the following decomposition.

\begin{lemma}[Segmented Liouville residual decomposition]
\label{lem:segmented_rollout_residual_decomp}
For every $\theta\in\Theta_{\Pi}^{(M)}$ and every $v\in\mathcal V$,
\[
\mathcal R_\theta(v)
=
\sum_{k=0}^{K-1} e_{\theta,k}(v)
+
\sum_{k=1}^{K-1}\langle v(\tau_k,\cdot),d_{\theta,k}\rangle.
\]
\end{lemma}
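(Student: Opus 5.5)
The identity is a telescoping computation, so I would prove it by expanding the sum of local rollout residuals and comparing it term by term with $\mathcal R_\theta(v)$. Throughout, measures on $X$ at interface $\tau_k$ are paired with $v(\tau_k,\cdot)$. The terminal measure $\mu_{T,\theta}=\delta_T\otimes\nu^-_{\theta,K}$ then gives $\langle v,\mu_{T,\theta}\rangle=\langle v(\tau_K,\cdot),\nu^-_{\theta,K}\rangle$. Likewise, $\mu_0$ is identified with $\delta_{t_0}\otimes\nu^+_{\theta,0}$, so $\langle v,\mu_0\rangle=\langle v(\tau_0,\cdot),\nu^+_{\theta,0}\rangle$.

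\textbf{Step 1 (transport term).} By linearity of the pairing and $\mu_\theta=\sum_k\mu_{\theta,k}$, we have $\langle L_fv,\mu_\theta\rangle=\sum_{k=0}^{K-1}\langle L_fv,\mu_{\theta,k}\rangle$. Here each $\mu_{\theta,k}$ is viewed as a measure on $Z$ supported in $[\tau_k,\tau_{k+1}]\times X\times U$. If adjacent segments share the time $\tau_k$, this is still just additivity of integrals over the summed measure, so no overlap issue arises.

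\textbf{Step 2 (boundary terms).} Summing the definition of $e_{\theta,k}(v)$ over $k$ gives
\[
\sum_{k=0}^{K-1}e_{\theta,k}(v)=\sum_{k=1}^{K}\langle v(\tau_k,\cdot),\nu^-_{\theta,k}\rangle-\sum_{k=0}^{K-1}\langle v(\tau_k,\cdot),\nu^+_{\theta,k}\rangle-\langle L_fv,\mu_\theta\rangle.
\]
I then separate out the $k=K$ term of the first sum and the $k=0$ term of the second. The remaining interior terms combine into $-\sum_{k=1}^{K-1}\langle v(\tau_k,\cdot),\nu^+_{\theta,k}-\nu^-_{\theta,k}\rangle=-\sum_{k=1}^{K-1}\langle v(\tau_k,\cdot),d_{\theta,k}\rangle$. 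The separated endpoint terms are exactly $\langle v,\mu_{T,\theta}\rangle-\langle v,\mu_0\rangle$. Together with Step 1, this gives $\sum_k e_{\theta,k}(v)=\mathcal R_\theta(v)-\sum_{k=1}^{K-1}\langle v(\tau_k,\cdot),d_{\theta,k}\rangle$, which rearranges to the claim.

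\textbf{Main obstacle.} There is no analytic difficulty. The only delicate point is bookkeeping: making sure the index ranges match, and that the endpoint identifications $\nu^+_{\theta,0}=\mu_0$ and $\mu_{T,\theta}=\delta_T\otimes\nu^-_{\theta,K}$ are applied with $v$ evaluated at the correct times. For these pairings to be well defined we need finite total mass, which follows from the standing assumption that all pairings are well defined.
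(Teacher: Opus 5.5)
Your proof is correct and follows essentially the same telescoping argument as the paper: sum the definitions of $e_{\theta,k}(v)$ over $k$, peel off the endpoint terms using $\nu^{+}_{\theta,0}=\mu_0$ and $\mu_{T,\theta}=\delta_T\otimes\nu^{-}_{\theta,K}$, and collect the interior terms into the interface defects $d_{\theta,k}$. The only cosmetic difference is that the paper solves each definition for $\langle L_f v,\mu_{\theta,k}\rangle$ and sums those expressions, whereas you sum the $e_{\theta,k}(v)$ directly.
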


\begin{proof}
By definition,
\[
\langle L_f v,\mu_\theta\rangle
=
\sum_{k=0}^{K-1}\langle L_f v,\mu_{\theta,k}\rangle.
\]
Substituting the definition of $e_{\theta,k}(v)$ gives
\[
\langle L_f v,\mu_{\theta,k}\rangle
=
\langle v(\tau_{k+1},\cdot),\nu_{\theta,k+1}^{-}\rangle
-
\langle v(\tau_k,\cdot),\nu_{\theta,k}^{+}\rangle
-
e_{\theta,k}(v).
\]
Summing over $k$ and rearranging yields
\[
\begin{aligned}
\langle L_f v,\mu_\theta\rangle
=
&\langle v,\mu_{T,\theta}\rangle
-
\langle v(t_0,\cdot),\nu_{\theta,0}^{+}\rangle\\
&-
\sum_{k=1}^{K-1}\langle v(\tau_k,\cdot),d_{\theta,k}\rangle
-
\sum_{k=0}^{K-1}e_{\theta,k}(v).
\end{aligned}
\]
Since $\nu_{\theta,0}^{+}=\mu_0$, this is the desired identity.
\end{proof}

By Lemma~\ref{lem:segmented_rollout_residual_decomp} the global weak residual accumulates local rollout residuals and interface defects. If each segment is generated by exact rollout, then
\[
e_{\theta,k}(v)\equiv 0
\qquad\forall v,\ \forall k.
\]
If, in addition, all interface defects vanish, then
\[
\mathcal R_\theta(v)=0
\qquad \forall v\in\mathcal V,
\]
hence the induced global pair is exactly OM-feasible.

Let $\|\cdot\|_{\mathrm{def}}$ be a chosen norm on signed endpoint measures, and denote its dual norm on $C(X)$ by $\|\phi\|_{\mathrm{def},*} := \sup_{\|\sigma\|_{\mathrm{def}}\le 1} |\langle \phi,\sigma\rangle|$.

Define the aggregate interface defect
\[
D_\theta
:=
\sum_{k=1}^{K-1}\|d_{\theta,k}\|_{\mathrm{def}}.
\]
Likewise, define the aggregate rollout residual size by
\[
E_\theta
:=
\sup_{\|v\|_{C^1}\le 1}
\left|
\sum_{k=0}^{K-1} e_{\theta,k}(v)
\right|.
\]
The segmented-rollout restricted feasible family is then
\[
\mathcal F^{\mathrm{imp}}_{\Pi,M}(\eta,\delta)
:=
\Bigl\{
\theta\in\Theta_{\Pi}^{(M)}
:\;
D_\theta\le \eta,\ E_\theta\le \delta
\Bigr\},
\]
and the associated restricted value is
\[
P^{\mathrm{imp}}_{\Pi,M}(\eta,\delta)
:=
\inf_{\theta\in \mathcal F^{\mathrm{imp}}_{\Pi,M}(\eta,\delta)} J(\theta).
\]

Accordingly, implicit consistency turns on the decay of aggregated rollout residuals and interface defects.
We say that the segmented-rollout family is \emph{asymptotically compact} if every sequence
$M_n\to\infty$, $\theta_n\in\Theta_{\Pi}^{(M_n)}$ satisfying
\[
\sup_n J(\theta_n)<\infty,
\qquad
D_{\theta_n}\to 0,
\qquad
E_{\theta_n}\to 0,
\]
admits a weak-$^\star$ convergent subsequence of induced primal pairs
\[
(\mu_{\theta_n},\mu_{T,\theta_n})
\rightharpoonup^\star
(\bar\mu,\bar\mu_T).
\]
A convenient sufficient condition is that the induced measures remain supported in fixed compact sets with uniformly bounded total mass.

We say that the segmented-rollout family is \emph{asymptotically dense in the exact OM-feasible set} if for every exact OM-feasible pair $(\mu,\mu_T)$ there exists a sequence
$\theta_M\in\Theta_{\Pi}^{(M)}$ such that
\[
(\mu_{\theta_M},\mu_{T,\theta_M})
\rightharpoonup^\star
(\mu,\mu_T),
\qquad
D_{\theta_M}\to 0,
\qquad
E_{\theta_M}\to 0.
\]

An implicit FOM is specified by two design conditions, all under the practitioner's control:
\begin{enumerate}
    \item \textbf{Nested capacity.} $\Theta_{\Pi}^{(M)}\subseteq\Theta_{\Pi}^{(M+1)}$ for all $M\ge 1$. Increasing $M$ enriches the segmented-rollout family without discarding previously attainable realizations.
    \item \textbf{Asymptotic density.} The segmented-rollout family is asymptotically dense in the exact OM-feasible set in the sense defined above. This is met by choosing a sufficiently expressive rollout parameterization.
\end{enumerate}
As in the explicit case, we assume $X$ and $U$ are compact, $\ell$, $g$, $L_f v$ (for all $v\in\mathcal V$) are continuous, and the trial-induced measures have uniformly bounded total mass. These guarantee asymptotic compactness and weak-$\star$ continuity of the cost functionals.

\begin{theorem}[Asymptotic consistency of implicit FOM]
\label{thm:implicit_fom_consistency}
Under design conditions~(i) and~(ii),
\[
\lim_{\delta\downarrow 0}\ \lim_{\eta\downarrow 0}\ \lim_{M\to\infty}
P^{\mathrm{imp}}_{\Pi,M}(\eta,\delta)
=
P.
\]
\end{theorem}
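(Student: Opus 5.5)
The proof follows the template of Theorem~\ref{thm:explicit_fom_consistency}: we prove matching upper and lower bounds on the iterated limit. Write $P_M(\eta,\delta):=P^{\mathrm{imp}}_{\Pi,M}(\eta,\delta)$. By nested capacity (i), $M\mapsto P_M(\eta,\delta)$ is nonincreasing, so $\lim_{M\to\infty}$ exists in $[-\infty,\infty)$. The feasible family grows in $\eta$ and $\delta$, so the subsequent limits are monotone as well. Working with $\limsup$ and $\liminf$ throughout avoids worrying about existence.

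\textbf{Upper bound.} Fix $\varepsilon>0$ and pick an exact OM-feasible $(\mu^\varepsilon,\mu_T^\varepsilon)$ whose cost is at most $P+\varepsilon$. Asymptotic density (ii) gives $\theta_M\in\Theta_\Pi^{(M)}$ with weak-$\star$ convergence, $D_{\theta_M}\to0$ and $E_{\theta_M}\to0$. For fixed $\eta,\delta>0$, the parameters $\theta_M$ are eventually feasible for $P_M(\eta,\delta)$. Weak-$\star$ continuity of $J$ gives $J(\theta_M)\le P+2\varepsilon$ for large $M$. Hence $\limsup_M P_M(\eta,\delta)\le P+2\varepsilon$ for all $\eta,\delta>0$. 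Taking the limits in $\eta$, then $\delta$, then $\varepsilon\downarrow0$ yields $\le P$.

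\textbf{Lower bound.} Let $L:=\liminf_{\delta}\liminf_{\eta}\liminf_{M}P_M(\eta,\delta)$, and assume $L<\infty$, since otherwise there is nothing to prove. A diagonal extraction gives sequences $\delta_n\downarrow0$, $\eta_n\downarrow0$, $M_n\to\infty$ and $\theta_n\in\Theta_\Pi^{(M_n)}$ with $D_{\theta_n}\le\eta_n$, $E_{\theta_n}\le\delta_n$ and $\limsup_n J(\theta_n)\le L$. This is done exactly as in the explicit proof, but with one extra nesting level: first choose $\delta_n$ nearly realizing the outer $\liminf$, then $\eta_n$ for that $\delta_n$, then $M_n\ge n$. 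In particular $\sup_n J(\theta_n)<\infty$. Asymptotic compactness, which is ensured by the uniform mass and compact support assumption, then yields a subsequence with $(\mu_{\theta_n},\mu_{T,\theta_n})\rightharpoonup^\star(\bar\mu,\bar\mu_T)$.

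The key step is to show that this limit is exactly OM-feasible, and here Lemma~\ref{lem:segmented_rollout_residual_decomp} is used. For $v\in\mathcal V$ with $\|v\|_{C^1}\le1$, the lemma gives
\[
|\mathcal R_{\theta_n}(v)|\le E_{\theta_n}+\sum_{k=1}^{K-1}\|v(\tau_k,\cdot)\|_{\mathrm{def},*}\,\|d_{\theta_n,k}\|_{\mathrm{def}}.
\]
The second term is at most $c\,D_{\theta_n}$, where $c:=\sup_{\|v\|_{C^1}\le1}\max_k\|v(\tau_k,\cdot)\|_{\mathrm{def},*}$. We assume $c$ is finite; this holds, for instance, for total variation or any norm dominated by it, and we would state this compatibility of $\|\cdot\|_{\mathrm{def}}$ explicitly. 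By homogeneity, $\mathcal R_{\theta_n}(v)\to0$ for every $v\in\mathcal V$. The assumed weak-$\star$ continuity of $\mu\mapsto\langle L_fv,\mu\rangle$ and of $\langle v,\cdot\rangle$ then passes this to the limit, giving $\langle v,\bar\mu_T-\mu_0\rangle=\langle L_fv,\bar\mu\rangle$ for all $v$. One also checks that $\bar\mu_T$ stays supported on $\{T\}\times X$, which holds because this is a closed support condition. Therefore $P\le J(\bar\mu,\bar\mu_T)=\lim J(\theta_n)\le L$.

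The main obstacle is this feasibility passage. It needs (a) the dual-norm constant $c$ linking $C^1$ test functions to the defect norm, so that $D_\theta\to0$ controls the interface contribution uniformly, and (b) weak-$\star$ continuity of the Liouvillian pairing for rollout-induced measures. The continuity in (b) is assumed in the paragraph after Definition~\ref{def:fom_framework}. The diagonal extraction over three limits is routine but must be arranged so that the $\liminf$ values are realized along a single sequence.
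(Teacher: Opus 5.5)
Your proposal is correct and follows the paper's proof. Asymptotic density gives the upper bound, and asymptotic compactness together with Lemma~\ref{lem:segmented_rollout_residual_decomp} forces the weak-$\star$ limit to be exactly OM-feasible, which gives the lower bound; here the interface term is controlled by a dual-norm constant, the paper's $C_\Pi(v)$. You additionally make explicit the monotonicity in $M$, the three-level diagonal extraction, and the finiteness of that constant, all of which the paper leaves implicit.

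One small correction to your parenthetical on when $c$ is finite. A defect norm \emph{dominated} by total variation has a dual norm at least a multiple of $\|\cdot\|_\infty$, so finiteness of $c$ is not automatic for it. Finiteness is automatic for norms that \emph{dominate} a multiple of total variation, or for bounded-Lipschitz-type norms on a convex $X$, whose dual is controlled by $\|v\|_{C^1}$.
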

    
\begin{proof}
For the upper bound, asymptotic density in the exact OM-feasible set supplies, for any exact OM-feasible $(\mu,\mu_T)$, a sequence $\theta_M$ with $D_{\theta_M}\to0$, $E_{\theta_M}\to0$, and $(\mu_{\theta_M},\mu_{T,\theta_M})\rightharpoonup^\star(\mu,\mu_T)$. By weak-$\star$ continuity of the cost, $P_{\Pi,M}(\eta,\delta)\le J(\theta_M)\to\langle\ell,\mu\rangle+\langle g,\mu_T\rangle$ for all sufficiently large $M$, giving the upper bound.

For the lower bound, take a sequence with $D_{\theta_n}\to0$, $E_{\theta_n}\to0$, and $J(\theta_n)$ approaching the liminf. By asymptotic compactness, a subsequence converges weak-$\star$ to some $(\bar\mu,\bar\mu_T)$. Lemma~\ref{lem:segmented_rollout_residual_decomp} gives
\[
|\mathcal R_{\theta_n}(v)| \le E_{\theta_n}\|v\|_{C^1} + C_\Pi(v)D_{\theta_n} \to 0
\]
for every $v\in C^1$, hence $\mathcal R_{\bar\mu,\bar\mu_T}(v)=0$ in the limit and $(\bar\mu,\bar\mu_T)$ is exactly OM-feasible. Weak-$\star$ continuity of the cost yields $P\le \lim_{n\to\infty} J(\theta_n)$, the lower bound.

Combining the bounds gives the claimed limit.
\end{proof}

Where Theorem~\ref{thm:explicit_fom_consistency} proceeds through intermediate test-family truncations, the implicit FOM obtains exact feasibility of the limit directly from vanishing defects, via Lemma~\ref{lem:segmented_rollout_residual_decomp}.

A useful special case is exact segmentwise rollout, for which $E_\theta\equiv 0$ and the consistency statement reduces to
\[
\lim_{\eta\downarrow 0}\ \lim_{M\to\infty}
P^{\mathrm{imp}}_{\Pi,M}(\eta,0)
=
P.
\]
Single shooting corresponds to $K=1$, so $D_\theta\equiv 0$ as well.
These simplifications eliminate parts of the residual mechanism; the density requirement remains. Exact rollout and single shooting alone are insufficient for asymptotic consistency.

\begin{example}[A deterministic optimal-control realization of implicit FOM]
    \label{rem:implicit_fom_practice}
    Implicit FOM covers sampling-based trajectory optimization and policy search.
    Consider a single-segment realization ($K=1$) where the control signal $u_\xi(\cdot)$ is drawn from a parameterized search distribution $q_\theta \in \mathcal P(\Xi)$. 
    Given an initial measure $\mu_0$, the ODE dynamics push forward $(x_0,\xi)\sim \mu_0\otimes q_\theta$ to induce the rollout cost in expectation form:
    \[
    \begin{aligned}
        J(\theta)
        &=
        \langle \ell,\mu_\theta\rangle+\langle g,\mu_{T,\theta}\rangle\\
        &=
        \mathbb E_{\substack{x_0\sim\mu_0\\ \xi\sim q_\theta}}
        \!\left[
        \int_{t_0}^T \ell\bigl(t,x_{x_0,\xi}(t),u_\xi(t)\bigr)\,dt
        +
        g\bigl(x_{x_0,\xi}(T)\bigr)
        \right].
    \end{aligned}
    \]
    When $q_\theta$ collapses to a Dirac mass, this recovers deterministic single shooting; when $q_\theta$ has broad support, it represents a randomized search policy. Information-Geometric Optimization (IGO)~\cite{ollivier2017information} is a standard solver for optimizing this class of expectation-based objectives. 
    As discussed in Section~\ref{subsec:fom_certificate_mechanisms}, the dual certificate $v_\psi$ can be computed by any method that produces $(\varepsilon,\varepsilon_T)$-feasible certificates. A complete numerical realization of this certificate-guided implicit FOM is detailed in Section~\ref{sec:experiments}.
\end{example}

\begin{remark}[The asymptotic density hypothesis]
\label{rem:fom_density_hypothesis}
Both Theorems~\ref{thm:explicit_fom_consistency} and~\ref{thm:implicit_fom_consistency} impose asymptotic density of the trial family as a hypothesis.
The hypothesis isolates one aspect of finite-dimensional relaxation, replacing the measure space $\mathcal M$ by the image of a realization map $\Phi:\Theta\to\mathcal M$, while constraint approximation (finite test families, rollout residuals, interface defects) is handled separately.

The hypothesis holds for standard constructions.
Empirical measures with $M$ tunable Dirac masses are weak-$^\star$ dense in $\mathcal M_+(Z)$ on compact support, and any sufficiently rich control parameterization inherits this property via continuity of $u\mapsto\mu$.

For instance, a time-discretized control parameterization whose per-step distribution is multimodal (e.g., a mixture of Gaussians) inherits asymptotic density as the temporal discretization is refined even for nonconvex problems. 
In practice, however, asymptotic density is not required for the FOM framework to be useful. 
The certificate provides a valid lower bound for every primal pair and, through its admissible set, supplies a search region, regardless of whether the primal parameterization is asymptotically dense.

Moment--SOS (Remark~\ref{rem:moment_sos_explicit_fom}) operates directly on truncated pseudo-moment sequences. LMI constraints enforce moment validity, but the SDP dimension $s(2d)=\binom{n+2d}{2d}=O(n^{2d})$ restricts the method to low-dimensional polynomial systems.
Asymptotic density and Archimedeanity thus play parallel limiting roles; the cost of the former is nonconvexity in $\theta$.
\end{remark}
\section{Computational consequences of FOM}
\label{sec:properties}

FOM connects a global structure (the dual certificate) to trajectory optimization through a primal-dual language. This chapter explores the computational consequences of the certificate-first viewpoint. 

\subsection{Structural complexity over state-space complexity}
\label{subsec:properties_structural_complexity}
To examine the structure of optimal control more generally and to place the OM dual certificate in a broader context, we use the term \emph{global value structure} to refer to any nonnegative $v\in\mathcal V$ that encodes the control objective through $v\to0$, and connects to a particular optimal control problem through its globally assessable HJB slack $s_v =\ell+L_fv$ and terminal mismatch $g-v(T,\cdot)$.

A global value structure appears under many names. An OM dual certificate requires $s_v\ge 0$ and $v(T,\cdot)\le g$ (HJB lower bound); a DP value function satisfies $s_v\ge 0$ and $\inf_u s_v = 0$ with $v(T,\cdot)=g$; a dissipation certificate (passivity storage function, control Lyapunov function) requires $s_v\le 0$ and $v(T,\cdot)\ge 0$ (upper bound).

Viewed this way, the curse of dimensionality is the cost of a tensor-product representation on the state-time grid. A global value structure, however, can be represented at low complexity by exploiting structure of dynamics and task.

Let $\mathcal S = \{S_k\}_{k=1}^K$ with $S_k \subseteq \{1,\dots,n\}$ be an index family, and consider the block-additive function class
\[
\mathcal V_{\mathcal S} := \{ v(t,x) = \sum_{k=1}^{K} v_k(t,x_{S_k}) \}.
\]
If each local component $v_k$ is parameterized by $r_k$ coefficients, a function in $\mathcal V_{\mathcal S}$ requires only $r_{\mathrm{tot}}=\sum_{k=1}^K r_k$ parameters, avoiding the tensor-product grid. Whether its HJB slack inherits this structure depends on $f$ and $\ell$.

For each $k$, let $N_k \supseteq S_k$ denote the state-index neighborhood on which $f_{S_k}$ and the $k$-th local residual factor depend, and $M_k$ the corresponding control-index set. Overlapping $N_k$ reflect block interconnections. The factor graph
\[
\mathcal G := \{(S_k,N_k,M_k)\}_{k=1}^K
\]
encodes the structural complexity of the decomposition.

For each block $k$, define the blockwise transport operator
\[
\begin{aligned}
&A_k : C^1([t_0,T] \times X_{S_k}) \to C(Z),\\
A_k \phi :=\ & \partial_t \phi(t,x_{S_k}) + \nabla_{x_{S_k}}\phi(t,x_{S_k})^\top f_k(t,x_{N_k},u_{M_k}),
\end{aligned}
\]
where $Z = [t_0,T] \times X \times U$ is the admissible region.

\begin{lemma}[Block-additive decomposition of the HJB slack]
\label{lem:block_hjb_slack_factorization}
Suppose $v \in \mathcal V_{\mathcal S}$, and that the running cost and dynamics admit compatible decompositions
\[
\begin{aligned}
\ell(t,x,u) &= \sum_{k=1}^{K} \ell_k(t,x_{N_k},u_{M_k}),\\
f_{S_k}(t,x,u) &= f_k(t,x_{N_k},u_{M_k}),
\end{aligned}
\]
for every $k$. Then the HJB slack decomposes as
\[
s_v(t,x,u) = \sum_{k=1}^{K} s_k(t,x_{N_k},u_{M_k}), \qquad
s_k := \ell_k + A_k v_k.
\]

Since $s_v = \sum_k s_k$, pushing each $s_k \to 0$ (from either sign) pushes $s_v \to 0$. The compatible decomposition of $(f,\ell)$ with respect to $\{S_k\}$ suffices for block-additive certificates.
\end{lemma}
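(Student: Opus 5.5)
The plan is a direct computation of $L_f v$ using linearity and the fact that each component $v_k$ depends only on $x_{S_k}$.

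\textbf{Step 1 (linearity).} Write $v=\sum_k v_k(t,x_{S_k})$. Since $L_f$ in \eqref{eq:LfDefinition} is linear in $v$, we have $L_f v=\sum_k L_f v_k$.

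\textbf{Step 2 (restrict the gradient).} Treat each $v_k$ as a $C^1$ function on $[t_0,T]\times X$ that is constant in the coordinates outside $S_k$. Then $\nabla_x v_k$ vanishes off the indices in $S_k$, and its $S_k$-block equals $\nabla_{x_{S_k}}v_k$. Hence
\[
\nabla_x v_k(t,x)^\top f(t,x,u)=\nabla_{x_{S_k}}v_k(t,x_{S_k})^\top f_{S_k}(t,x,u).
\]

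\textbf{Step 3 (substitute the dynamics decomposition).} Insert $f_{S_k}(t,x,u)=f_k(t,x_{N_k},u_{M_k})$. This gives
\[
L_f v_k=\partial_t v_k+\nabla_{x_{S_k}}v_k^\top f_k=A_k v_k.
\]
Because $S_k\subseteq N_k$, this quantity depends only on $(t,x_{N_k},u_{M_k})$.

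\textbf{Step 4 (add the cost).} Using $\ell=\sum_k\ell_k$, the slack is
\[
s_v=\ell+L_fv=\sum_k(\ell_k+A_kv_k)=\sum_k s_k ,
\]
and each $s_k$ is a function of $(t,x_{N_k},u_{M_k})$ only.

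\textbf{Step 5 (closing remark).} The statement that pushing each $s_k\to 0$ pushes $s_v\to 0$ follows from the triangle inequality:
\[
|s_v|\le\sum_k|s_k| .
\]
This holds pointwise, and also in sup-norm over $Z$, regardless of the signs of the $s_k$. The terminal mismatch is not part of the claim.

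The only delicate point is Step 2. The index sets $S_k$ may overlap, so the same coordinate $x_i$ can receive gradient contributions from several blocks. Linearity handles this without any disjointness assumption, because each block's contribution pairs only with $f_{S_k}$, and all those components are covered by the assumed decomposition of the dynamics. If the $S_k$ overlap, $f_{S_k}$ and $f_{S_j}$ share the components indexed by $S_k\cap S_j$. Consistency of the hypothesis is then implicitly assumed, and I would note it rather than prove it.
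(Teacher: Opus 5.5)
Your proposal is correct and follows the paper's proof essentially line by line. You use linearity of $L_f$ over the blocks, reduce each gradient pairing to $\nabla_{x_{S_k}}v_k^\top f_{S_k}$, substitute $f_{S_k}=f_k$ to obtain $A_k v_k$, and add $\ell=\sum_k\ell_k$; your extra details (zero-padding of $\nabla_x v_k$, the bound $|s_v|\le\sum_k|s_k|$ for the closing remark, and the observation that overlapping $S_k$ need no disjointness assumption) are sound and only make explicit what the paper leaves implicit.
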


\begin{proof}
By additivity of $v$ and linearity of the transport operator,
\[
L_f v = \sum_{k=1}^{K} \bigl( \partial_t v_k + \nabla_{x_{S_k}}v_k^\top f_{S_k} \bigr)
= \sum_{k=1}^{K} A_k v_k,
\]
where the second equality uses $f_{S_k} = f_k$. Adding $\ell = \sum_k \ell_k$,
\[
s_v = \ell + L_f v = \sum_{k=1}^{K} (\ell_k + A_k v_k) = \sum_{k=1}^{K} s_k.
\]
\end{proof}

Lemma~\ref{lem:block_hjb_slack_factorization} gives an additive decomposition of the HJB slack. The HJB \emph{equation} does not decompose in this way: $\inf_u \sum_k s_k \neq \sum_k \inf_u s_k$ in general, so blockwise optimality does not follow from the additive slack decomposition. Feasibility ($s_v \ge 0$) introduces a further coupling. Each $s_k$ depends on $N_k \supset S_k$ and $M_k$, so through their overlapping neighborhoods the blocks remain coupled even though $s_v = \sum_k s_k$ is block-additive. 

Define a \emph{local supply-rate budget} $\sigma_k(t,x_{N_k},u_{M_k})$ by the inequality $s_k \ge \sigma_k$ to absorb the coupling between $s_v$. If the budgets collectively satisfy $\sum_k \sigma_k \ge 0$, then we have $s_v = \sum_k s_k \ge \sum_k \sigma_k \ge 0$. Individual $s_k$ may then take either sign, and inter-block terms that cancel in the sum need not be eliminated block-by-block.

\begin{theorem}[Blockwise synthesis of OM-dual certificates]
\label{thm:blockwise_om_dual_assembly}
Let the assumptions of Lemma~\ref{lem:block_hjb_slack_factorization} hold. Suppose there exist local supply-rate budgets $\sigma_k(t,x_{N_k},u_{M_k})$ such that
\[
s_k(t,x_{N_k},u_{M_k}) \ge \sigma_k(t,x_{N_k},u_{M_k}), \quad k=1,\dots,K,
\]
and
\[
\sum_{k=1}^{K}\sigma_k(t,x_{N_k},u_{M_k}) \ge 0
\]
for all admissible $(t,x,u)$. Then the block-additive function
\[
v(t,x) = \sum_{k=1}^{K} v_k(t,x_{S_k})
\]
satisfies $s_v(t,x,u) \ge 0$. If, in addition, the terminal cost admits a compatible decomposition $g(x) = \sum_{k=1}^{K} g_k(x_{S_k})$, and the local terminal inequalities $v_k(T,x_{S_k}) \le g_k(x_{S_k})$ hold for all $k$, then $v(T,x) \le g(x)$, and hence $v$ is a globally feasible OM-dual certificate.
\end{theorem}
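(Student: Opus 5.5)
The argument is a direct chaining of inequalities on top of Lemma~\ref{lem:block_hjb_slack_factorization}, done separately for the running slack and the terminal slack. First I would invoke Lemma~\ref{lem:block_hjb_slack_factorization}. Under the compatible decompositions of $\ell$ and $f$, and with $v=\sum_k v_k\in\mathcal V_{\mathcal S}$, it gives the pointwise identity
\[
s_v(t,x,u)=\sum_{k=1}^K s_k(t,x_{N_k},u_{M_k}),\qquad s_k=\ell_k+A_kv_k,
\]
for every $(t,x,u)\in Z$. Fix an arbitrary admissible $(t,x,u)$. I would evaluate each block inequality $s_k\ge\sigma_k$ at the projected arguments $(t,x_{N_k},u_{M_k})$ and sum over $k$. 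Using the budget condition $\sum_k\sigma_k\ge0$ at the same point then yields
\[
s_v(t,x,u)\ge\sum_k\sigma_k(t,x_{N_k},u_{M_k})\ge0.
\]
Since the point was arbitrary, $s_v\ge0$ holds on $Z$. The one subtlety to check is that all quantities are evaluated at a common global point. Because each $s_k$ and $\sigma_k$ depends only on the sub-coordinates $(x_{N_k},u_{M_k})$ of that point, the overlapping neighborhoods cause no inconsistency.

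For the terminal part, I would use $v(T,x)=\sum_k v_k(T,x_{S_k})$ together with $g(x)=\sum_k g_k(x_{S_k})$. Summing the local inequalities $v_k(T,x_{S_k})\le g_k(x_{S_k})$ over $k$ gives $v(T,x)\le g(x)$ for all $x\in X$, that is, $s_T\ge0$.

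Combining the two parts, $v$ satisfies both constraints of the OM dual \eqref{eq:omDualLp}, so it is dual-feasible, i.e.\ a globally feasible OM-dual certificate in the sense following Definition~\ref{def:hjb_slacks}. I would also verify that $v\in C^1([t_0,T]\times X)$. This holds because each $v_k\in C^1$ and composition with a coordinate projection preserves $C^1$. This regularity is implicit in the statement through $A_k$ being defined on $C^1$ functions.

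I do not expect a genuine obstacle. The only step needing care is the bookkeeping that the inequalities $s_k\ge\sigma_k$ are assumed for all admissible local arguments, so that they apply at the restrictions of every global point. If admissibility were defined only globally, via $Z_{\mathrm{adm}}$, I would note that the hypotheses are stated ``for all admissible $(t,x,u)$'', which is exactly what the summation requires.
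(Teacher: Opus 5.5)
Your proposal is correct and follows essentially the same argument as the paper: invoke Lemma~\ref{lem:block_hjb_slack_factorization} to write $s_v=\sum_k s_k$, sum the local inequalities $s_k\ge\sigma_k$ and apply $\sum_k\sigma_k\ge 0$, then sum the terminal inequalities $v_k(T,\cdot)\le g_k$ to obtain $v(T,\cdot)\le g$. Your extra remarks on evaluating all blocks at a common global point and on the $C^1$ regularity of $v$ are harmless bookkeeping that the paper leaves implicit.
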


\begin{proof}
By Lemma~\ref{lem:block_hjb_slack_factorization},
\[
s_v(t,x,u) = \sum_{k=1}^{K} s_k(t,x_{N_k},u_{M_k}).
\]

Using the local budget inequalities,
\[
s_v(t,x,u) = \sum_{k=1}^{K} s_k(t,x_{N_k},u_{M_k}) \ge \sum_{k=1}^{K} \sigma_k(t,x_{N_k},u_{M_k}) \ge 0.
\]

For the terminal condition, summing the local inequalities gives
\[
v(T,x) = \sum_{k=1}^{K} v_k(T,x_{S_k}) \le \sum_{k=1}^{K} g_k(x_{S_k}) = g(x).
\]
\end{proof}

Theorem~\ref{thm:blockwise_om_dual_assembly} shows that the OM dual certificate separates under compatible problem formulation, leaving only a sparse supply-rate budget constraint $\sum_k \sigma_k \ge 0$ as the inter-block coupling.

Factor graphs of this type arise naturally in passivity-based constructive nonlinear control~\cite{sepulchre2012constructive,KOKOTOVIC2001637}, where a block-additive storage function $U = \sum_k U_k(t,x_{S_k})$ is already supported on $\mathcal G$.
Inverse-optimal cost shaping~\cite{freeman1996robust} lifts $U$ to a global value structure by producing $v_k(t,x_{S_k})$ and reshaping $\ell_k(t,x_{N_k},u_{M_k})$ on the same index sets, so that the decompositions of $\ell$ and $f_{S_k}$ satisfy Lemma~\ref{lem:block_hjb_slack_factorization}. The factor graph $\mathcal G$ is invariant to the direction of the HJB slack inequality.

\begin{proposition}[Factor graph invariance from passivity storage to OM dual certificate]
\label{prop:inherited_factor_graph_ioc}
Given a passivity-based design with factor graph $\mathcal G$, storage function $U = \sum_k U_k$, and local supply rates $w_k$ satisfying $A_k U_k \le w_k$, suppose IOC cost shaping produces running costs $\ell_k$ on the same index sets with $\sum_k(\ell_k-w_k)\ge0$. If there exist $\phi_k\in C^1$ with $\phi_k\ge U_k$ vanishing where $U_k=0$, such that
\[
\sum_k\bigl(\ell_k+A_k\phi_k-w_k\bigr)\ge0,
\]
then, selecting a terminal cost $g(x) = \sum_k g_k(x_{S_k})$ on $\{S_k\}$ with $g_k \ge \phi_k(T,\cdot) - U_k(T,\cdot)$ pointwise on $X_{S_k}$, the function $v = \sum_k v_k$ with $v_k := \phi_k - U_k$ is a feasible OM dual certificate with supply-rate budgets $\sigma_k$ on the same factor graph $\mathcal G$, satisfying $s_k\ge\sigma_k$, $\sum_k\sigma_k\ge0$, and $v(T,\cdot)\le g$.
\end{proposition}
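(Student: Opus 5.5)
The plan is to reduce the statement to Theorem~\ref{thm:blockwise_om_dual_assembly} by exhibiting explicit supply-rate budgets $\sigma_k$ on the same index sets $(N_k,M_k)$. First, $v_k:=\phi_k-U_k$ depends only on $(t,x_{S_k})$, and is $C^1$ whenever $U_k$ is. Hence $v=\sum_k v_k\in\mathcal V_{\mathcal S}$ with the same $\{S_k\}$. The cost-shaped $\ell_k$ and the passivity dynamics $f_k$ live on $(N_k,M_k)$ by hypothesis, so the compatible decompositions required by Lemma~\ref{lem:block_hjb_slack_factorization} hold. That lemma then gives $s_v=\sum_k s_k$ with $s_k=\ell_k+A_kv_k$, and the factor graph $\mathcal G$ is unchanged.

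Next, expand each block slack using linearity of $A_k$:
\[
s_k=\ell_k+A_k\phi_k-A_kU_k .
\]
The passivity inequality $A_kU_k\le w_k$ gives $-A_kU_k\ge -w_k$, hence
\[
s_k\ \ge\ \ell_k+A_k\phi_k-w_k=:\sigma_k(t,x_{N_k},u_{M_k}).
\]
Each $\sigma_k$ depends only on block-$k$ variables, because $\ell_k$, $w_k$, and $A_k\phi_k$ all do. The stated hypothesis $\sum_k(\ell_k+A_k\phi_k-w_k)\ge0$ is exactly $\sum_k\sigma_k\ge0$. So the budget conditions of Theorem~\ref{thm:blockwise_om_dual_assembly} hold, and $s_v\ge0$ on $Z$.

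For the terminal side, $v_k(T,\cdot)=\phi_k(T,\cdot)-U_k(T,\cdot)\le g_k$ pointwise by the choice of $g_k$. Summing gives $v(T,\cdot)\le g$, so the second half of Theorem~\ref{thm:blockwise_om_dual_assembly} yields global OM-dual feasibility. The assumption $\sum_k(\ell_k-w_k)\ge0$ and the normalization ($\phi_k\ge U_k$, vanishing where $U_k=0$) are not needed for feasibility. I would remark that they only make $v\ge0$ vanish at the storage minimizers, which makes $v$ a global value structure in the sense above.

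The main obstacle is bookkeeping rather than analysis. One must check that $A_k$ acts on $U_k$ with the same $f_k$ used in the passivity design, so that $A_kU_k\le w_k$ is a pointwise inequality on $Z$ in the block variables. One must also confirm that the IOC reshaping of $\ell_k$ does not enlarge $N_k$ or $M_k$. If the passivity design uses supply rates depending on neighbor outputs, I would absorb those dependencies into $N_k$ from the start, so that invariance of $\mathcal G$ holds by construction.
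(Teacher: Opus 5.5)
Your proposal is correct and matches the paper's proof: both set $\sigma_k := \ell_k + A_k\phi_k - w_k$, get $s_k\ge\sigma_k$ from $A_kU_k\le w_k$, read $\sum_k\sigma_k\ge0$ directly off the hypothesis, and sum the local terminal inequalities $v_k(T,\cdot)\le g_k$ to obtain $v(T,\cdot)\le g$. Invoking Theorem~\ref{thm:blockwise_om_dual_assembly} instead of summing the slacks inline is only a presentational difference. Your remark that $\sum_k(\ell_k-w_k)\ge0$ and the normalization of $\phi_k$ are not needed for feasibility is accurate.
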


\begin{proof}
Take $v_k:=\phi_k-U_k$. Then $v_k\ge0$ and vanishes at the goal, and the selected $g_k \ge \phi_k(T,\cdot) - U_k(T,\cdot)$ gives $v_k(T,\cdot) \le g_k$, whose sum over $k$ yields $v(T,\cdot) \le g$. The HJB slack reduces to
\[
\begin{aligned}
   s_v = \sum_k(\ell_k+A_kv_k)
      &= \sum_k(\ell_k+A_k\phi_k-A_kU_k)\\
      &\ge \sum_k(\ell_k+A_k\phi_k-w_k)
      \ge 0 .
\end{aligned}
\]
The compatible supply rates exist by assigning $\sigma_k := \ell_k+A_k\phi_k-w_k$, and we have $s_k\ge \sigma_k$. Hence $s_v \ge 0$ and $v(T,\cdot) \le g$, so $v$ is a feasible OM dual certificate.
\end{proof}

Naively choosing $\phi_k = \alpha U_k$ with $\alpha \ge 1$ satisfies the hypotheses of Proposition~\ref{prop:inherited_factor_graph_ioc} provided $\alpha$ is not too large, yielding $v_k = (\alpha-1)U_k$. When $\sum_k w_k = 0$, as in backstepping or forwarding design~\cite{sepulchre2012constructive}, $v_k = U_k$ and $\sigma_k = w_k$ satisfy Theorem~\ref{thm:blockwise_om_dual_assembly} directly.

\begin{example}[Backstepping certificates for strict-feedback systems with an open port]
\label{ex:properties_strict_feedback_backstepping}
Consider an $n$-dimensional strict-feedback subsystem whose top layer is open. The control is split as $u = \alpha_n(\bar x_n) + \nu$, where $\alpha_n$ is the backstepping nominal feedback and $\nu$ is the port input driven by an upper-level decision-maker (e.g., an MPC, or game-theoretic planner). The dynamics are
\[
\begin{aligned}
\dot x_k &= f_k(\bar x_k) + g_k(\bar x_k)x_{k+1},\; k=1,\dots,n-1,\\
\dot x_n &= f_n(\bar x_n) + g_n(\bar x_n)(\alpha_n + \nu),
\end{aligned}
\]
with $\bar x_k := (x_1,\dots,x_k)$. Backstepping introduces error coordinates $z_1 = x_1$, $z_k = x_k - \alpha_{k-1}(\bar x_{k-1})$ and the block-additive storage function
\[
U = \frac12 \sum_{k=1}^n z_k^2 =: \sum_{k=1}^n U_k(z_k), \qquad U_k(z_k) = \tfrac12 z_k^2.
\]

In the $z$-coordinates, $U_k$ depends only on the scalar $z_k$; hence $S_k = \{k\}$. The error dynamics
\[
\begin{aligned}
\dot z_k &= -c_k z_k - g_{k-1}z_{k-1} + g_k z_{k+1}, \qquad k=1,\dots,n-1,\\
\dot z_n &= -c_n z_n - g_{n-1}z_{n-1} + g_n \nu,
\end{aligned}
\]
with $g_0 := 0$, $c_k > 0$, exhibit the pairwise coupling $g_k z_k z_{k+1}$ that drives the supply-rate structure; the port input $\nu$ enters only through the last equation.

Inverse-optimal cost shaping selects $\ell = \sum_k \ell_k$ compatible with $U$. 
IOC reshapes $\ell$ so that $U$ satisfies the HJB equality, hence the OM-dual inequality $\ell + L_f U \ge 0$. 
Then $U$ is the OM-dual certificate for the shaped problem. 
Taking $v_k = U_k$, the blockwise HJB slacks become
\[
\begin{aligned}
s_k &:= A_k U_k + \ell_k \\&= g_k z_k z_{k+1} - g_{k-1}z_{k-1}z_k, \qquad k=1,\dots,n-1,\\
s_n &= g_n z_n \nu - g_{n-1}z_{n-1}z_n,
\end{aligned}
\]
where the quadratic diagonal terms are canceled by the shaped cost. Setting $\sigma_k := s_k$, the pairwise cancellation of the cross terms leaves
\[
\sum_{k=1}^n \sigma_k = g_n z_n \nu.
\]

Let $\sigma_{\mathrm{upper}}$ denote the supply-rate budget contributed by the upper-level decision-maker. The global dual feasibility condition $\sum_k \sigma_k \ge 0$ of Theorem~\ref{thm:blockwise_om_dual_assembly} reduces to the scalar interface inequality
\[
\sigma_{\mathrm{upper}} + g_n z_n\,\nu \ge 0.
\]
The $n$ internal blocks are certified independently; their conditions $s_k \ge \sigma_k$ hold with equality and involve only $z_{k-1},z_k,z_{k+1}$.

The same supply rate $g_n z_n \nu$ governs the primal side. Summing $\ell_k + A_k U_k = s_k$ over $k$ and using $\sum_k s_k = g_n z_n \nu$ gives
\[
\sum_{k=1}^n \ell_k = g_n z_n \nu - \dot U.
\]
Along any trajectory,
\[
\int_{t_0}^T \sum_k \ell_k\,dt = \int_{t_0}^T g_n z_n \nu\,dt + U(x(t_0)) - U(x(T)).
\]
Up to the constant $U(x(t_0))$, the upper-level primal objective reduces to a marginal running cost $g_n z_n \nu$ and a terminal correction $-U(x(T))$. 
The supply rate $g_n z_n \nu$ appears on both sides. It determines dual feasibility and becomes the marginal running cost seen by the upper-level planner.
\end{example}

Blockwise operations on $(\varepsilon,\varepsilon_T)$-feasible certificates satisfy the following.

\begin{proposition}[Certificate reduction via dissipative components]
\label{prop:properties_dissipative_reduction}
Let $v(t,x) = \sum_{k=1}^{K} v_k(t,x_{S_k})$ be $(\varepsilon,\varepsilon_T)$-feasible for the running and terminal costs $(\ell,g)$ under the compatible decomposition of Lemma~\ref{lem:block_hjb_slack_factorization}.
Suppose that for some index $j$,
\begin{equation}\label{eq:properties_dissipative_condition}
L_f v_j \le \delta \quad \text{on } Z,
\qquad
v_j(T,\cdot) \ge -\delta_T \quad \text{on } X_{S_j},
\end{equation}
with $\delta, \delta_T \ge 0$.
Then the reduced candidate $\tilde v = \sum_{k \neq j} v_k$ is $(\varepsilon+\delta,\; \varepsilon_T+\delta_T)$-feasible.
When $\delta = \delta_T = 0$, the original tolerances $(\varepsilon,\varepsilon_T)$ are preserved.
\end{proposition}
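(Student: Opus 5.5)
The argument is a direct computation on the slacks. We write the slack of $\tilde v$ in terms of the slack of $v$ and the removed component, then bound the removed contribution using \eqref{eq:properties_dissipative_condition}. By linearity of $L_f$,
\[
\tilde s(t,x,u) = \ell + L_f\tilde v = \ell + L_f v - L_f v_j = s_v(t,x,u) - (L_f v_j)(t,x,u).
\]
Here $L_f v_j$ is the full transport operator applied to $v_j$ viewed as a function on $[t_0,T]\times X$. Under the compatible decomposition of Lemma~\ref{lem:block_hjb_slack_factorization}, it equals $A_j v_j$. Only linearity is actually needed, so the decomposition enters just to make the block identification consistent.

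The running-slack bound comes from two hypotheses. The $(\varepsilon,\varepsilon_T)$-feasibility of $v$ gives $s_v\ge-\varepsilon$ on $Z$. The dissipative condition $L_f v_j\le\delta$ gives $-L_f v_j\ge-\delta$. Adding these yields $\tilde s\ge -(\varepsilon+\delta)$ pointwise on $Z$.

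The terminal slack is handled in the same way:
\[
\tilde s_T(x) = g(x)-\tilde v(T,x) = g(x) - v(T,x) + v_j(T,x_{S_j}) = s_T(x) + v_j(T,x_{S_j}).
\]
We have $s_T\ge-\varepsilon_T$, and $v_j(T,\cdot)\ge-\delta_T$ on $X_{S_j}$, which extends to all $x\in X$ since $v_j$ depends only on $x_{S_j}$. Together these give $\tilde s_T\ge-(\varepsilon_T+\delta_T)$. Setting $\delta=\delta_T=0$ recovers the original tolerances.

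The main point requiring care is the sign bookkeeping. Removing $v_j$ subtracts $L_f v_j$ from the running slack, so an upper bound on $L_f v_j$ is what we need there. Removing $v_j$ also adds $v_j(T,\cdot)$ to the terminal slack, so a lower bound on $v_j(T,\cdot)$ is what we need there. Both directions match \eqref{eq:properties_dissipative_condition}.

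One further check is that $\tilde v$ still lies in $\mathcal V$. It is a finite sum of $C^1$ components, so it does.
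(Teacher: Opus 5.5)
Your proof is correct and follows the same approach as the paper's: linearity gives $\tilde s = s_v - L_f v_j \ge -\varepsilon-\delta$, and the terminal identity $g-\tilde v(T,\cdot) = g - v(T,\cdot) + v_j(T,\cdot)\ge -\varepsilon_T-\delta_T$ handles the endpoint. Your remarks on the sign bookkeeping and on extending $v_j(T,\cdot)\ge-\delta_T$ from $X_{S_j}$ to all of $X$ are accurate and harmless additions.
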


\begin{proof}
By linearity of the transport operator,
\[
\ell + L_f \tilde v
= \ell + L_f(v - v_j)
= (\ell + L_f v) - L_f v_j
\ge -\varepsilon - \delta,
\]
using $(\varepsilon,\varepsilon_T)$-feasibility of $v$ and the hypothesis $L_f v_j \le \delta$.
For the terminal condition,
\[
g - \tilde v(T,\cdot)
= g - v(T,\cdot) + v_j(T,\cdot)
\ge -\varepsilon_T - \delta_T,
\]
using $v(T,\cdot) \le g + \varepsilon_T$ and $v_j(T,\cdot) \ge -\delta_T$.
\end{proof}

\begin{remark}[Supply-rate interpretation of dissipative reduction]
\label{rem:properties_dissipative_reduction_budget}
Condition $L_f v_j \le 0$ yields $s_j \le \ell_j$, so the HJB slack of block $j$ is bounded above by its local running cost.
In the supply-rate framework of Theorem~\ref{thm:blockwise_om_dual_assembly}, such a block satisfies its own local slack bound without affecting $\sum_k \sigma_k\ge0$.
Removing it preserves the global constraint $\sum_k \sigma_k \ge 0$ without adjustment.
Both $L_f v_j \le 0$ and $v_j(T,\cdot) \ge 0$ align with the direction that relaxes the OM-dual inequalities upon removal of the component.
\end{remark}

\begin{remark}[Connection to passivity]
\label{rem:properties_dissipative_reduction_passivity}
The hypothesis $L_f v_j \le 0$ is the defining inequality of a dissipation certificate with zero running cost.
Proposition~\ref{prop:properties_dissipative_reduction} implies that any passive component in a block-additive OM-dual certificate can be removed without breaking feasibility.
For large-scale systems, exploring feedback passivation to create passive inner loops is beneficial.
Design effort then concentrates on the blocks for which $L_fv_k\le0$ cannot be guaranteed, where cross-term cancellations sustain feasibility.
\end{remark}

Blockwise approximation yields the following additive error propagation guarantee.

\begin{corollary}[Additive error propagation in blockwise assembly]
\label{cor:properties_approx_block_additive}
Let $v^\star = \sum_{k=1}^K v_k^\star$ be a globally feasible block-additive OM-dual certificate. Suppose that, for every $\eta > 0$, each block admits an approximation $\tilde v_{k,\eta} \in \mathcal U_k^{(r_k(\eta))}$ satisfying
\[
\sup_{Z} |A_k(\tilde v_{k,\eta} - v_k^\star)| \le \eta, \qquad
\sup_{X_{S_k}} |\tilde v_{k,\eta}(T,\cdot) - v_k^\star(T,\cdot)| \le \eta.
\]
Then $\tilde v_\eta := \sum_{k=1}^K \tilde v_{k,\eta}$ is $(K\eta, K\eta)$-feasible. Hence it yields the certified lower bound
\[
\langle \tilde v_\eta(t_0,\cdot), \mu_0\rangle
- (T-t_0)\mu_0(X)K\eta
- \mu_0(X)K\eta \le P,
\]
using $r_{\mathrm{tot}}(\eta) = \sum_{k=1}^K r_k(\eta)$ coefficients.
\end{corollary}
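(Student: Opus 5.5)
The plan is a direct slack-perturbation argument followed by an application of Proposition~\ref{prop:prelim_approx_OM_lower_bound}. First, I will use Lemma~\ref{lem:block_hjb_slack_factorization}, whose compatible decompositions of $\ell$ and $f$ are in force because $v^\star$ is block-additive under them. It gives $L_f v = \sum_k A_k v_k$ for any block-additive $v=\sum_k v_k$. Since $\tilde v_\eta$ and $v^\star$ live on the same index family $\mathcal S$, their difference is block-additive, and by linearity of each $A_k$,
\[
s_{\tilde v_\eta} - s_{v^\star} = L_f(\tilde v_\eta - v^\star) = \sum_{k=1}^K A_k(\tilde v_{k,\eta} - v_k^\star).
\]
Here $\ell$ cancels. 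Taking absolute values and applying the hypothesis $\sup_Z|A_k(\tilde v_{k,\eta}-v_k^\star)|\le\eta$ blockwise gives $|s_{\tilde v_\eta}-s_{v^\star}|\le K\eta$ on $Z$. Global feasibility of $v^\star$ gives $s_{v^\star}\ge 0$, and therefore $s_{\tilde v_\eta}\ge -K\eta$.

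Second, for the terminal slack I will write
\[
g - \tilde v_\eta(T,\cdot) = \bigl(g - v^\star(T,\cdot)\bigr) - \sum_k\bigl(\tilde v_{k,\eta}(T,x_{S_k}) - v_k^\star(T,x_{S_k})\bigr).
\]
The first bracket is nonnegative by feasibility of $v^\star$. Each summand in the second term is bounded by $\eta$ uniformly on $X$, because the bound holds on $X_{S_k}$ and $x\mapsto x_{S_k}$ maps $X$ into $X_{S_k}$. Hence $s_{T}\ge -K\eta$, and $\tilde v_\eta$ is $(K\eta,K\eta)$-feasible in the sense of Definition~\ref{def:approx_dual_feasibility}.

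Third, I will invoke Proposition~\ref{prop:prelim_approx_OM_lower_bound} with $\varepsilon=\varepsilon_T=K\eta$. This yields the displayed lower bound $\underline P(\tilde v_\eta)\le P$ directly. The parameter count is immediate: $\tilde v_\eta$ is specified by the coefficients of its $K$ blocks, $\sum_k r_k(\eta)=r_{\mathrm{tot}}(\eta)$ in total.

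No step is a genuine obstacle. The only point needing care is to state explicitly that $L_f$ acting on a block-additive difference equals $\sum_k A_k$ of the blockwise differences. This requires that $\tilde v_{k,\eta}$ depend only on $(t,x_{S_k})$ and be $C^1$, i.e.\ that the approximation class $\mathcal U_k^{(r_k(\eta))}$ sits inside $C^1([t_0,T]\times X_{S_k})$. I will state this as an implicit standing assumption.

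I would also remark that the bound is conservative: feasibility of $v^\star$ is used only to get $s_{v^\star}\ge0$. Any block budgets $\sigma_k$ as in Theorem~\ref{thm:blockwise_om_dual_assembly} are not needed, because the errors are absorbed additively into a uniform shift of the global slack.
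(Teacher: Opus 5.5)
Your proposal is correct and follows essentially the same route as the paper: write $\ell+L_f\tilde v_\eta$ via Lemma~\ref{lem:block_hjb_slack_factorization} as the slack of $v^\star$ plus $\sum_k A_k(\tilde v_{k,\eta}-v_k^\star)\ge -K\eta$, treat the terminal slack the same way, and invoke Proposition~\ref{prop:prelim_approx_OM_lower_bound}. One wording fix: the compatible dynamics decomposition $f_{S_k}=f_k$, which gives $L_f\bigl(\sum_k v_k\bigr)=\sum_k A_k v_k$, is a standing hypothesis carried over from the lemma's setting, not a consequence of $v^\star$ being block-additive.
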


\begin{proof}
By Lemma~\ref{lem:block_hjb_slack_factorization},
\[
\ell + L_f \tilde v_\eta = \ell + L_f v^\star + \sum_{k=1}^K A_k(\tilde v_{k,\eta} - v_k^\star).
\]
Since $v^\star$ is feasible, $\ell + L_f v^\star \ge 0$, while the last sum is bounded below by $-K\eta$. Hence $\ell + L_f \tilde v_\eta \ge -K\eta$. Similarly,
\[
g - \tilde v_\eta(T,\cdot) = g - v^\star(T,\cdot) + \sum_{k=1}^K \bigl(v_k^\star(T,\cdot) - \tilde v_{k,\eta}(T,\cdot)\bigr) \ge -K\eta.
\]
Thus $\tilde v_\eta$ is $(K\eta,K\eta)$-feasible. The lower bound follows from Proposition~\ref{prop:prelim_approx_OM_lower_bound}, and the coefficient count is immediate.
\end{proof}

Blockwise assembly (Theorem~\ref{thm:blockwise_om_dual_assembly}), factor graph inheritance (Proposition~\ref{prop:inherited_factor_graph_ioc}), dissipative reduction (Proposition~\ref{prop:properties_dissipative_reduction}), and finite-dimensional approximation (Corollary~\ref{cor:properties_approx_block_additive}) share a single factor graph.
If $r_k(\eta) \lesssim \eta^{-\beta_k}$, then
\[
r_{\mathrm{tot}}(\eta) \lesssim K\,\eta^{-\beta}, \quad \beta := \max_k \beta_k.
\]
For a prescribed global accuracy $\varepsilon$, the local tolerance is tightened to $\eta = \varepsilon/K$, yielding $r_{\mathrm{tot}}(\varepsilon) \lesssim K^{\beta+1}\,\varepsilon^{-\beta}$.
The bottleneck shifts from ambient dimension $n$ to interconnection topology.

\subsection{Certificates are reusable online objects}
\label{subsec:properties_online_reuse}

Trajectories may be fragile under horizon shifts and perturbations; certificates are not. Defined over the full state-time domain, a certificate remains valid across instances.

Write the horizon length as
\[
H:=T-t_0.
\]

\begin{proposition}[Reuse under shifts and perturbations]
\label{prop:properties_certificate_reuse}
Let $v(t,x)$ be $(\varepsilon,\varepsilon_T)$-feasible for the nominal problem with dynamics $f(t,x,u)$ and costs $(\ell(t,x,u), g(x))$ on the horizon $[0,H]$.
Define the time-shifted certificate
\[
v^\tau(t,x) := v(t-\tau,x),
\]
and the time-shifted nominal data
\[
f^\tau(t,x,u) := f(t-\tau,x,u),\qquad
\ell^\tau(t,x,u) := \ell(t-\tau,x,u).
\]
Consider a perturbed problem on the shifted horizon $[\tau,\tau+H]$ with costs and dynamics $(\tilde f,\tilde\ell,\tilde g)$ satisfying
\[
\|\tilde f-f^\tau\|_\infty\le\delta_f,\quad
\|\tilde\ell-\ell^\tau\|_\infty\le\delta_\ell,\quad
\|\tilde g-g\|_\infty\le\delta_g.
\]
Then $v^\tau$ is $(\tilde\varepsilon,\tilde\varepsilon_T)$-feasible for the perturbed problem, where
\[
\tilde\varepsilon
= \varepsilon+\delta_\ell+\|\nabla_x v\|_\infty\delta_f,\qquad
\tilde\varepsilon_T
= \varepsilon_T+\delta_g.
\]
It yields the certified lower bound
\[
\langle v^\tau(\tau,\cdot),\mu_\tau\rangle
- H\mu_\tau(X)\tilde\varepsilon
- \mu_\tau(X)\tilde\varepsilon_T
\le \tilde P,
\]
where $\mu_\tau$ is the state distribution at time $\tau$ and $\tilde P$ is the optimal value of the perturbed problem.
\end{proposition}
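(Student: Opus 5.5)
The plan is to verify the two slack inequalities of Definition~\ref{def:approx_dual_feasibility} directly for $v^\tau$ in the perturbed problem. Once that is done, the lower bound follows from Proposition~\ref{prop:prelim_approx_OM_lower_bound} applied on $[\tau,\tau+H]$ with initial measure $\mu_\tau$.

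\emph{Running slack.} Fix $(t,x,u)$ with $t\in[\tau,\tau+H]$ and set $t' := t-\tau\in[0,H]$. By the chain rule, $\partial_t v^\tau(t,x)=\partial_t v(t',x)$ and $\nabla_x v^\tau(t,x)=\nabla_x v(t',x)$. The perturbed slack $\tilde s := \tilde\ell + \partial_t v^\tau + \nabla_x v^{\tau\top}\tilde f$ splits as
\[
\tilde s(t,x,u) = s(t',x,u) + \bigl(\tilde\ell-\ell^\tau\bigr)(t,x,u) + \nabla_x v(t',x)^\top\bigl(\tilde f-f^\tau\bigr)(t,x,u),
\]
where $s=\ell+L_f v$ is the nominal slack, evaluated at the shifted point via $\ell^\tau(t,\cdot)=\ell(t',\cdot)$ and $f^\tau(t,\cdot)=f(t',\cdot)$. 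The three terms are bounded below by $-\varepsilon$, $-\delta_\ell$, and $-\|\nabla_x v\|_\infty\delta_f$. The last bound uses Cauchy--Schwarz, with $\|\nabla_x v\|_\infty$ read as the sup of the Euclidean norm of the gradient and $\|\tilde f-f^\tau\|_\infty$ as the sup of the Euclidean norm of the vector field. These norm conventions must be fixed consistently, and this is the only delicate point. Summing the three bounds gives $\tilde s\ge-\tilde\varepsilon$.

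\emph{Terminal slack.} At the shifted terminal time, $v^\tau(\tau+H,x)=v(H,x)$. Therefore
\[
\tilde g(x)-v^\tau(\tau+H,x) = \bigl(g(x)-v(H,x)\bigr) + \bigl(\tilde g(x)-g(x)\bigr) \ge -\varepsilon_T-\delta_g = -\tilde\varepsilon_T.
\]

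\emph{Lower bound.} The perturbed problem is a deterministic fixed-horizon problem on $[\tau,\tau+H]$ with initial measure $\mu_\tau$. For it, Proposition~\ref{prop:prelim_approx_OM_lower_bound} applies verbatim with $t_0$ replaced by $\tau$ and $T-t_0=H$, using the feasibility just established; this yields $\underline P(v^\tau)\le\tilde P$, which is exactly the stated inequality. The argument uses the same mass identities $\mu(Z)=H\mu_\tau(X)$ and $\mu_T(X)=\mu_\tau(X)$ as in the nominal case.

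I expect no real obstacle. The main care points are two: the interpretation of the sup-norms in the $\delta_f$ term, and the requirement that the state and control sets are unchanged by the shift, so that $v^\tau$ is $C^1$ on the shifted domain.
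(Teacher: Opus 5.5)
Your proof is correct and follows essentially the same route as the paper. You split the perturbed slack into the shifted nominal slack plus the $(\tilde\ell-\ell^\tau)$ and $\nabla_x v^{\tau\top}(\tilde f-f^\tau)$ terms, use the terminal identity $v^\tau(\tau+H,\cdot)=v(H,\cdot)$, and apply Proposition~\ref{prop:prelim_approx_OM_lower_bound} on the shifted horizon, exactly as the paper does; your explicit Cauchy--Schwarz pairing of the norms in the $\delta_f$ term is a point the paper leaves implicit.
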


\begin{proof}
Define the time-shift operator $(\tau^* h)(t,\cdot) := h(t-\tau,\cdot)$.
Then $v^\tau = \tau^* v$, $f^\tau = \tau^* f$, $\ell^\tau = \tau^*\ell$.
Since $\tau^*$ is linear and commutes with $\partial_t$ and $\nabla_x$,
\[
L_{f^\tau} v^\tau + \ell^\tau
= L_{\tau^* f}(\tau^* v) + \tau^*\ell
= \tau^*(L_f v + \ell).
\]
The perturbed HJB slack then expands as
\[
\begin{aligned}
L_{\tilde f} v^\tau + \tilde\ell
&= L_{f^\tau} v^\tau + \ell^\tau
   + \nabla_x v^\tau{}^\top(\tilde f - f^\tau)
   + (\tilde\ell - \ell^\tau) \\[2pt]
&= \tau^*(L_f v + \ell)
   + \nabla_x v^\tau{}^\top(\tilde f - f^\tau)
   + (\tilde\ell - \ell^\tau) \\[2pt]
&\ge -\varepsilon - \|\nabla_x v\|_\infty\delta_f - \delta_\ell
= -\tilde\varepsilon.
\end{aligned}
\]
For the terminal condition,
\[
\tilde g - v^\tau(\tau+H,\cdot)
= g - v(H,\cdot) + (\tilde g - g)
\ge -\varepsilon_T - \delta_g
= -\tilde\varepsilon_T.
\]
The lower bound follows from Proposition~\ref{prop:prelim_approx_OM_lower_bound} applied with initial time $\tau$ and horizon $H$.
\end{proof}

The time-shift preserves feasibility unchanged for any time-variant or autonomous $f$ and $\ell$. 
Certificate warm-starting thus applies directly to receding-horizon settings.
All degradation arises from the perturbation, where $(\tilde f,\tilde\ell,\tilde g)$ deviate from the shifted baseline $(f^\tau,\ell^\tau,g)$. 
The three perturbation contribute additively and independently to the tolerances. 
\section{Experiments}
\label{sec:experiments}

\subsection{Benchmark Problem}

\begin{figure}[!b]
    \centering
    \includegraphics[width=\linewidth]{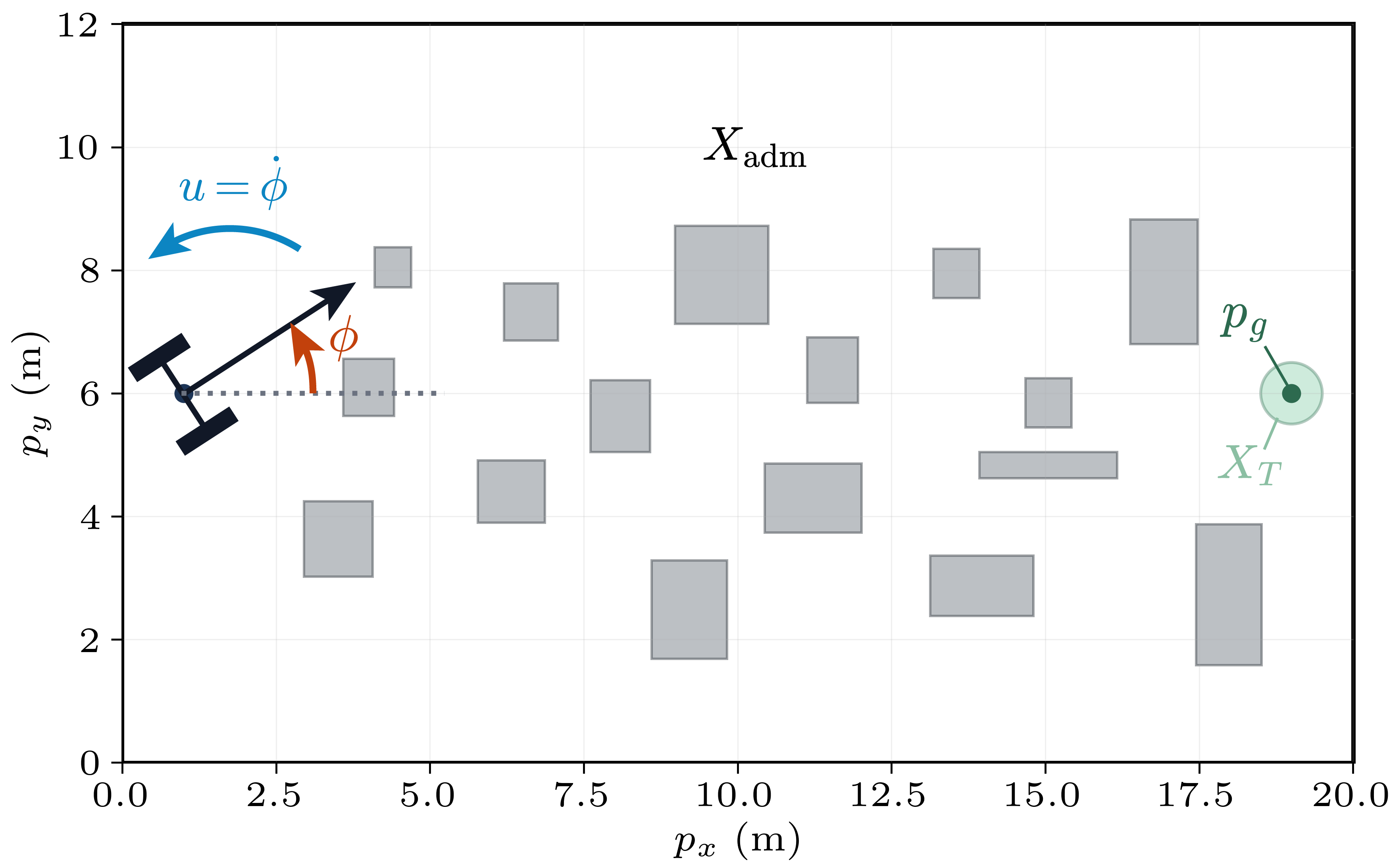}
    \caption{The benchmark problem.}
    \label{fig:benchmark_problem}
\end{figure}

\begin{figure*}[t]
    \centering
    \begin{subfigure}[t]{0.32\textwidth}
        \centering
        \includegraphics[width=\linewidth]{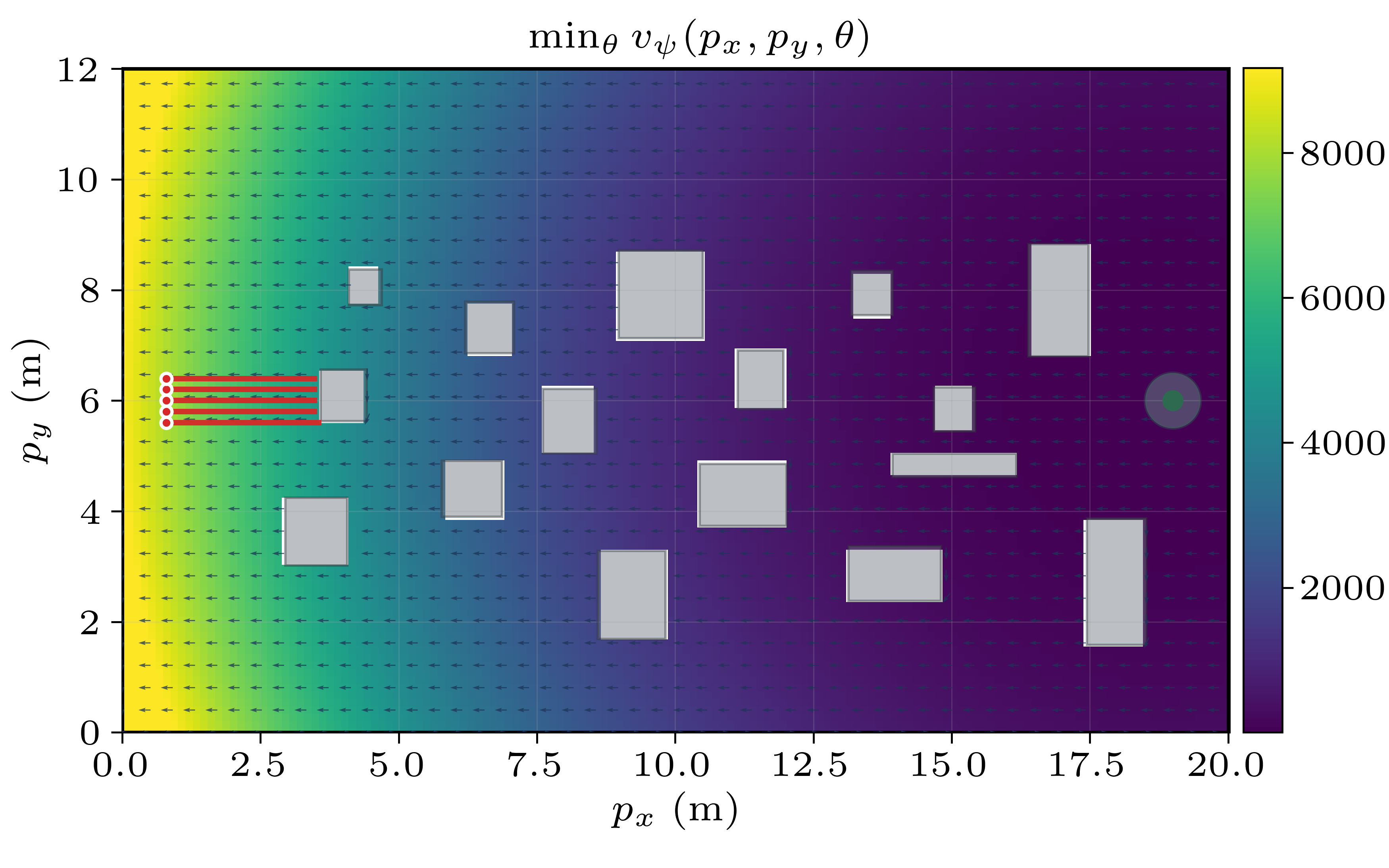}
        \caption{$K=0$ ($v_{\mathrm{rad}}(p)$).}
        \label{fig:certificate_it_00}
    \end{subfigure}\hfill
    \begin{subfigure}[t]{0.32\textwidth}
        \centering
        \includegraphics[width=\linewidth]{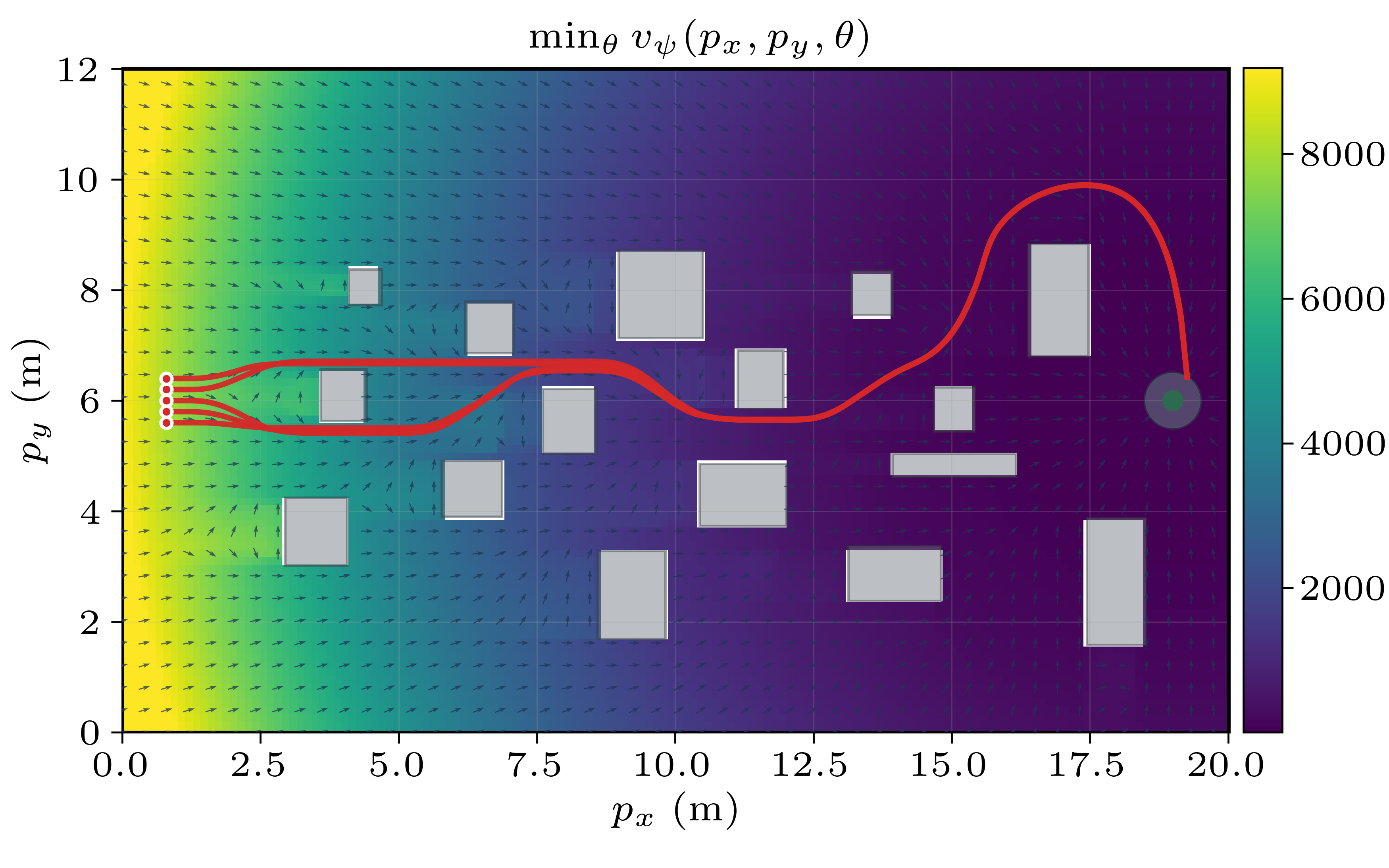}
        \caption{$K=50$.}
        \label{fig:certificate_it_50}
    \end{subfigure}\hfill
    \begin{subfigure}[t]{0.32\textwidth}
        \centering
        \includegraphics[width=\linewidth]{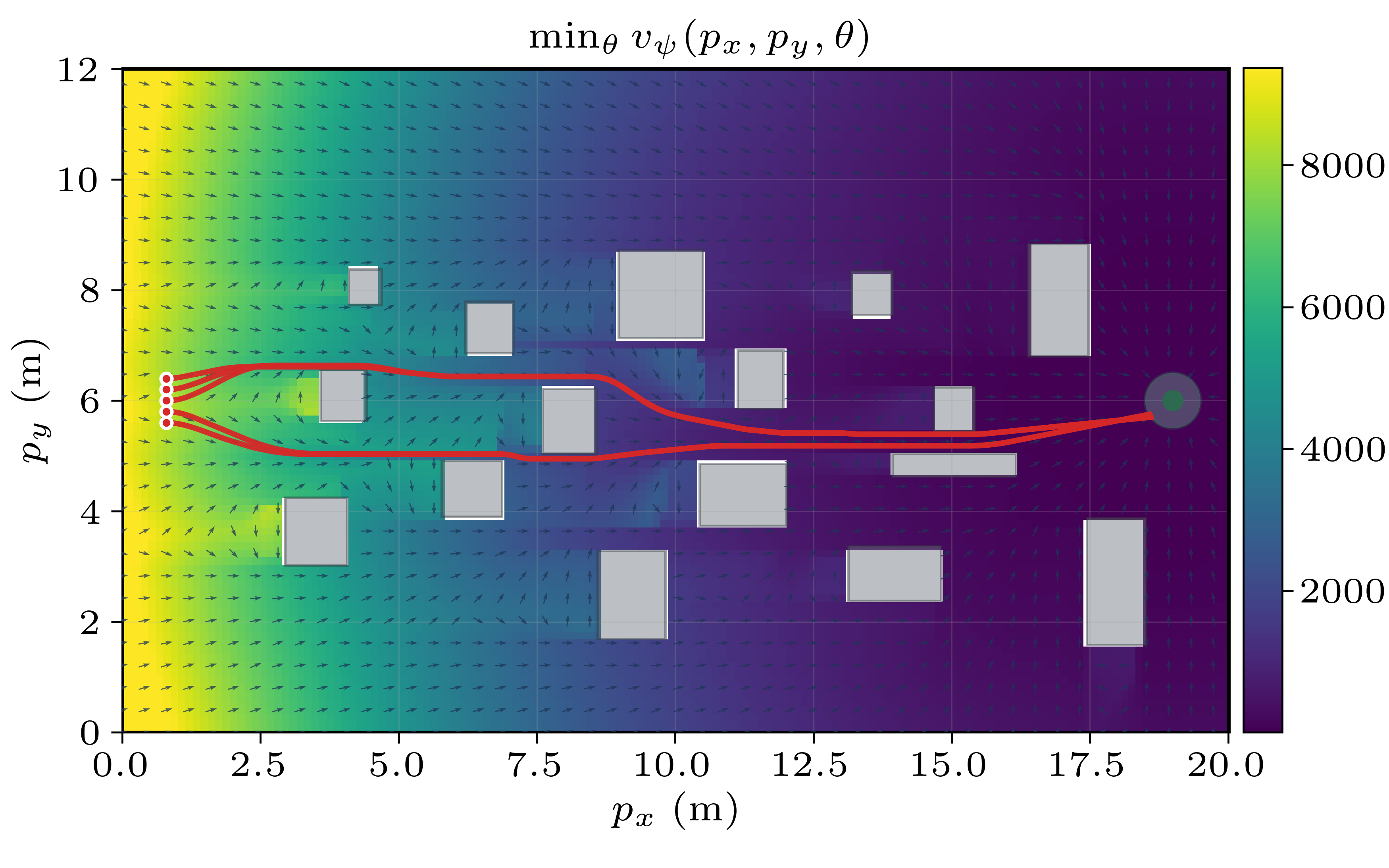}
        \caption{$K=300$.}
        \label{fig:certificate_it_300}
    \end{subfigure}
    \caption{Certificate $\min_\phi v_\psi(p_x,p_y,\phi)$ at $K=0$, $50$, and $300$, overlaid with greedy-policy rollouts (red). Additional sweeps propagate obstacle information outward from the goal. Even at $K=300$ the certificate remains coarse; it supplies global lower-bound structure, and the primal update refines the trajectories.}
    \label{fig:certificate_construction}
\end{figure*}

To obtain a benchmark that is simultaneously nonconvex, combinatorially rich, and easy to visualize, we consider a static obstacle-avoidance problem for a constant-speed unicycle. The state is
$x=[p_x,p_y,\phi]^\top\in X:=\mathbb R^2\times\mathbb S^1$,
where $(p_x,p_y)$ denotes the planar position and $\phi$ denotes the heading angle. The control is the angular velocity $u\in\mathcal U\subset\mathbb R$. The dynamics are
\[
\dot x=f(x,u),\qquad
f(x,u)=
\begin{bmatrix}
\bar{v}\cos\phi\\
\bar{v}\sin\phi\\
u
\end{bmatrix},
\]
where $\bar{v}>0$ is a constant forward speed.

Write $p=[p_x,p_y]^\top$, and let $\mathcal O_j\subset\mathbb R^2$ ($j=1,\ldots,N_{\mathrm{obs}}$) denote the static obstacle regions. The admissible state set is
\[
X_{\mathrm{adm}}:=\Big\{x\in X: p\notin\bigcup_{j=1}^{N_{\mathrm{obs}}}\mathcal O_j\Big\}.
\]
The terminal target set is defined by
\[
X_T:=\{x\in X:\ \|p-p_g\|\le \rho\},
\]
where $p_g\in\mathbb R^2$ is the goal center and $\rho>0$ is the goal radius. We choose the quadratic running cost
\[
\ell(x,u)=q_p\|p-p_g\|^2+r_u u^2,
\]
where $q_p,r_u>0$ are given parameters.
The benchmark geometry is shown in Figure~\ref{fig:benchmark_problem}.

Write $d:=\|p-p_g\|$. Define the radial lower approximation
\[
v_{\mathrm{rad}}(p):=q_p\int_0^{(d-\rho)/\bar{v}}(d-\bar{v}t)^2\,dt,
\qquad v_{\mathrm{rad}}(p)=0\ \text{for}\ d\le\rho.
\]
On the exterior region $d>\rho$,
\[
v_{\mathrm{rad}}(p)=\frac{q_p}{3\bar{v}}(d^3-\rho^3),\qquad
\nabla_p v_{\mathrm{rad}}(p)=\frac{q_p}{\bar{v}}d(p-p_g).
\]
Along a unicycle trajectory $\|\dot p\|=\bar v$ and $\dot d=(p-p_g)^\top\dot p\,/\,d$, so $\dot d\ge -\bar v$. Therefore
\begin{equation}
\nabla_p v_{\mathrm{rad}}(p)\cdot\dot p
= \frac{q_p}{\bar v}d^2\dot d
\ge -q_p d^2. \label{eq:radial_gradient_bound}
\end{equation}

Now decompose the certificate $v(p_x,p_y,\phi)=v_1+v_2$ with $S_1=\{1,2\}, N_1=\{1,2,3\}$ and $S_2=\{3\}, N_2=\{3\}$. Split the running cost as $\ell_1=q_p d^2$, $\ell_2=r_u u^2$. The heading dynamics $\dot\phi=u$ are passive, so Proposition~\ref{prop:properties_dissipative_reduction} allows $v_2=0$ and $A_2v_2=0$. Take $v_1=v_{\mathrm{rad}}(p)$. Then $A_1v_1=\nabla_p v_{\mathrm{rad}}\cdot\dot p$, and \eqref{eq:radial_gradient_bound} gives $A_1v_1\ge -q_p d^2$. The blockwise slacks satisfy $s_1=\ell_1+A_1v_1\ge 0$ and $s_2=\ell_2\ge 0$. Setting $\sigma_k=s_k$ yields $\sum_k\sigma_k\ge r_u u^2\ge 0$. Theorem~\ref{thm:blockwise_om_dual_assembly} confirms $v=v_{\mathrm{rad}}$ as a feasible certificate.

The argument is independent of the obstacle configuration; the same inequality holds on $X_{\mathrm{adm}}$. We therefore take $g(x)=v_{\mathrm{rad}}(p)$ as the terminal cost (with a slight abuse of notation $v_{\mathrm{rad}}(x)\equiv v_{\mathrm{rad}}(p)$). In the experiments, $v_{\mathrm{rad}}$ serves as both the terminal cost and the warm start for certificate tightening.

All numerical methods in the sequel are applied to a fixed-step vector-space discretization of this problem, with a time step of $\Delta t=0.05$\,s and a planning horizon of $N=900$ steps.

\begin{figure*}[t]
    \centering
    \begin{subfigure}[t]{0.49\textwidth}
        \centering
        \includegraphics[width=\linewidth]{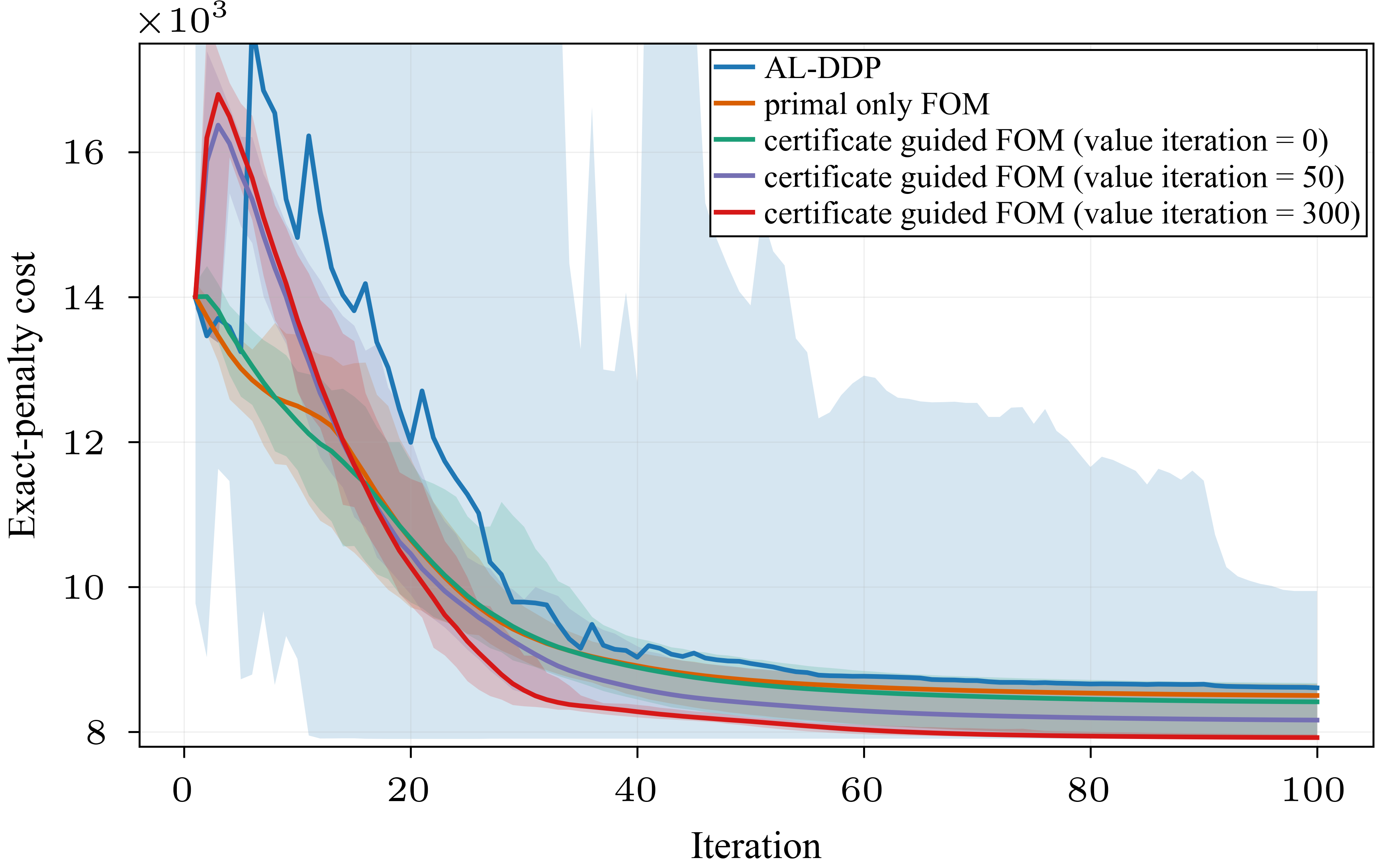}
        \caption{Exact-penalty cost over iterations.}
        \label{fig:experiments_cost}
    \end{subfigure}\hfill
    \begin{subfigure}[t]{0.49\textwidth}
        \centering
        \includegraphics[width=\linewidth]{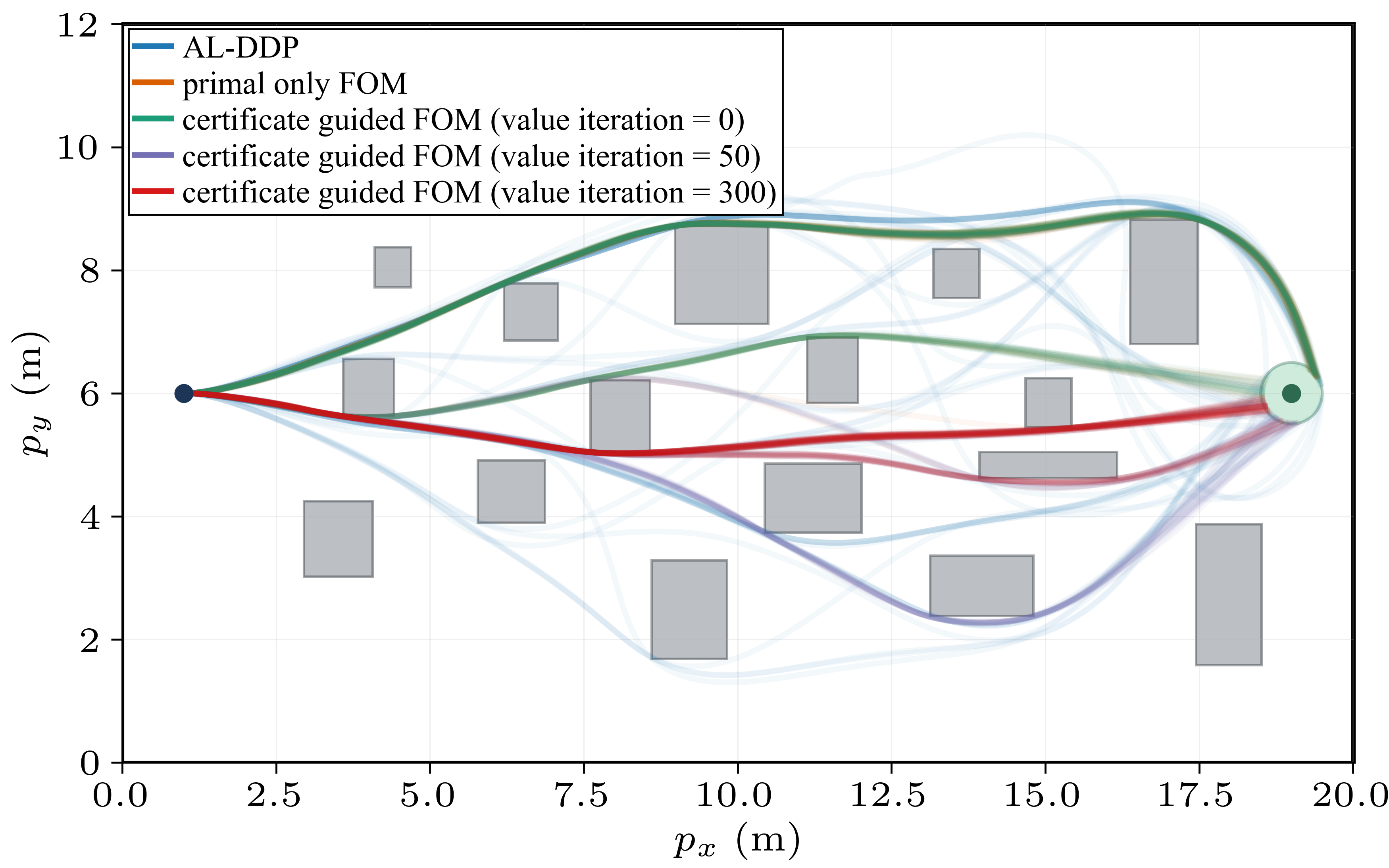}
        \caption{Best trajectory per method after $100$ iterations.}
        \label{fig:experiments_traj}
    \end{subfigure}
    \caption{Optimization behavior on the benchmark promblem. In (a), thick curves denote the same mean exact-penalty cost over $50$ different random seeds for each FOM variant and over $50$ random initializations for AL-DDP; shaded regions show the corresponding minimum--maximum envelopes. In (b), tighter certificates favor the lower-cost route.}
    \label{fig:experiments_results}
\end{figure*}

\subsection{Experimental Setup}

\textbf{Certificate tightening.}
A static grid-based certificate is constructed via semi-Lagrangian value iteration on a discretization of the state space $(p_x,p_y,\phi)$, initialized by $v_{\mathrm{rad}}$. Since $v_{\mathrm{rad}}$ already satisfies the discrete HJB inequality, the monotonicity of value iteration guarantees from-below improvement at every semi-Lagrangian sweep. 
The interpolated certificate candidate after $K$ iterations is used as an $(\varepsilon,\varepsilon_T)$-feasible certificate $v_\psi$, with $\varepsilon_T=\max_{X_T}(v_\psi-g)_+$ and $\varepsilon$ computed from a cellwise Lipschitz bound on the HJB slack.

We obtain three certificates of increasing tightness by halting at $K\in\{0,50,300\}$ iterations; the case $K=0$ corresponds to using $v_{\mathrm{rad}}$ directly (hence $\varepsilon=\varepsilon_T=0$), as shown in Figure~\ref{fig:certificate_construction}.

\textbf{Primal search.}
We implement the implicit FOM of Example~\ref{rem:implicit_fom_practice} with an IGO-based search~\cite{ollivier2017information}. Let $u$ denote the discretized control vector. At iteration $k$, the search distribution is $P_{\theta_k}=\mathcal N(m_k,\Sigma_k)$ with $\Sigma_k=A_kA_k^\top$, where $\theta_k=(m_k,A_k)$. 
$\Sigma_k$ is chosen block-diagonal a priori, and $A_k$ inherits the same block structure. A sample $u\sim P_{\theta_k}$ is drawn via $u=m_k+A_k s$ with $s\sim\mathcal N(0,I)$. The primal objective is approximated by Monte Carlo rollouts:
\[
\langle \ell,\mu_{\theta_k}\rangle+\langle g,\mu_{\theta_k,T}\rangle
\approx
\mathbb E_{u\sim P_{\theta_k}}J(u).
\]
We use the IGO quantile-based rewriting and implement the resulting search step with an xNES-style natural-gradient update.
At each iteration $N=10\,000$ rollouts are evaluated and ranked by cost, with infeasible trajectories placed after all feasible ones to strictly simulate exact-penalty behavior; the $N_e$ elite samples (top $5\%$ quantile) are retained and assigned equal normalized weights $\hat w_i=1/N_e$, and let $s_i$ denote their whitening vectors. The update is
\[
\begin{aligned}
m_{k+1}
&=
m_k+\eta_m A_k\sum_{i=1}^{N_e}\hat w_i s_i,\\
A_{k+1}
&=
A_k \exp\!\left(
\frac{\eta_A}{2}
\sum_{i=1}^{N_e}
\hat w_i\left(s_is_i^\top-I\right)
\right),
\end{aligned}
\]
where $\eta_m$ and $\eta_A$ are the step sizes for the mean and covariance factor.

\textbf{Certificate-guided pruning.}
Certificate guidance is realized through incumbent-based early termination. Let $J^\star$ denote the best feasible cost found so far. Following Remark~\ref{rem:incumbent_specialization}, we test
\[
J_{\mathrm{pre}}(t)+v_\psi(x(t))-\varepsilon(T-t)-\varepsilon_T
\le J^\star+\tau,
\]
where $J_{\mathrm{pre}}(t)$ is the cost accumulated up to time $t$ and $\tau\ge 0$ is a tolerance. Violating this bound immediately terminates and discards the rollout.

\textbf{Evaluated methods.}
We compare five methods. 
The first is augmented Lagrangian differential dynamic programming (AL-DDP), a commonly used trajectory optimizer. AL-DDP encodes obstacle constraints via smooth signed-distance fields with an augmented Lagrangian penalty and uses a restart heuristic to mitigate local-minima trapping.
The remaining four are implicit FOM variants that share the same IGO backbone . These include a primal-only variant that performs IGO search alone, and three certificate-guided variants using semi-Lagrangian certificates after $K=0$, $50$, and $300$ value iterations. The $K=0$ variant uses $v_{\mathrm{rad}}$ as its certificate. Since $v_{\mathrm{rad}}$ already serves as the terminal cost in the primal objective, this certificate provides only information the primal search already possesses. As a control, it isolates the effect of certificate-guided pruning from the effect of additional certificate structure.

\textbf{Results.}
Figure~\ref{fig:experiments_results} summarizes the optimization results. Table~\ref{tab:results} reports the exact-penalty cost statistics at iteration~$100$.

\begin{table}[t]
\centering
\caption{Exact-penalty cost at iteration~$100$.}
\label{tab:results}
\begin{tabular}{lrrrr}
\toprule
Method & Mean & SD & Min & Max \\
\midrule
AL-DDP & 8609.0 & 465.7 & 7907.8 & 9945.3 \\
primal-only FOM & 8503.4 & 296.3 & 7934.7 & 8664.2 \\
cert-guided FOM ($K{=}0$)  & 8416.0 & 343.7 & 7930.1 & 8677.9 \\
cert-guided FOM ($K{=}50$) & 8163.9 & 279.7 & 7910.2 & 8654.8 \\
cert-guided FOM ($K{=}300$) & 7922.8 &  22.0 & 7909.6 & 7970.4 \\
\bottomrule
\end{tabular}
\end{table}

Near $100$ iterations in Figure~\ref{fig:experiments_cost}, AL-DDP, primal-only FOM, and $K=0$ certificate-guided FOM reach similar mean-cost levels (Table~\ref{tab:results}), all above the costs reached with tighter certificates. Certificate guidance helps only when the certificate supplies information beyond what the primal search already possesses.

More value-iteration sweeps narrow the min–max envelope and lower the mean cost at iteration 100 (Table~\ref{tab:results}). The $K=300$ certificate takes the globally favorable route in most instances, as the trajectories in Figure~\ref{fig:experiments_traj} show.

The mean-cost curves show a complementary interaction between primal search and certification. 
Primal-only FOM and $K=0$ certificate-guided FOM decrease monotonically, as expected from the quantile improvement property of the IGO flow~\cite[Proposition~7]{ollivier2017information}. 
Certificate-guided FOM with nontrivial HJB information first rises in cost, then drops faster. 
The certificate prunes rollout branches that the primal-only search would otherwise explore, steering the distribution away from high-cost local minima.

\section{Conclusion}
\label{sec:conclusion}

In this paper we developed a flexible finite-dimensional numerical realization of the occupation-measure framework and, in primal--dual language, explored certificate-guided global search for numerical optimal control. FOM advocates a certificate-first approach and, through the structure of the HJB slack, establishes a deep connection to passivity-based design, enabling systematic hierarchical design as a powerful tool for high-performance numerical optimal control.

\textbf{Limitations.}
(i) FOM is organized around finite-dimensional primal realizations; nonparametric empirical particle-flow methods would require additional structure and an extended certificate-residual interface; 
(ii) joint convergence of certificate tightening and nonconvex primal search remains open

\textbf{Future work.}
Several directions remain open:
(i) further finite-dimensional FOM realizations, in particular for hybrid, switched, and contact-rich dynamics;
(ii) extensions beyond classical information structure, including stochastic formulations, partial observation, and dynamic games;
and (iii) possible analogues of certificate-guided lifting in optimal estimation and filtering.

\appendices

\section*{Acknowledgment}

\bibliographystyle{IEEEtran}
\bibliography{references} 

@article{lasserre2008nonlinear,
  title={Nonlinear optimal control via occupation measures and LMI-relaxations},
  author={Lasserre, Jean B and Henrion, Didier and Prieur, Christophe and Tr{\'e}lat, Emmanuel},
  journal={SIAM journal on control and optimization},
  volume={47},
  number={4},
  pages={1643--1666},
  year={2008},
  publisher={SIAM}
}

@book{sepulchre2012constructive,
  title={Constructive nonlinear control},
  author={Sepulchre, Rodolphe and Janković, Mrdjan and Kokotović, Petar V},
  year={1997},
  publisher={Springer Science \& Business Media}
}

@inproceedings{horowitz2014linear,
  title={Linear Hamilton Jacobi Bellman equations in high dimensions},
  author={Horowitz, Matanya B and Damle, Anil and Burdick, Joel W},
  booktitle={53rd IEEE Conference on Decision and Control},
  pages={5880--5887},
  year={2014},
  organization={IEEE}
}

@article{kang2017mitigating,
  title={Mitigating the curse of dimensionality: sparse grid characteristics method for optimal feedback control and HJB equations},
  author={Kang, Wei and Wilcox, Lucas C},
  journal={Computational Optimization and Applications},
  volume={68},
  number={2},
  pages={289--315},
  year={2017},
  publisher={Springer}
}

@article{bosch2000proof,
  title={A proof of a local maximum principle for optimal control problems with mixed state constraints},
  author={Bosch, P and G{\'o}mez, JA},
  journal={Rev Invest Oper Braz},
  volume={9},
  number={3},
  pages={239--262},
  year={2000}
}

@article{bellman1966dynamic,
  title={Dynamic programming},
  author={Bellman, Richard},
  journal={science},
  volume={153},
  number={3731},
  pages={34--37},
  year={1966},
  publisher={American Association for the Advancement of Science}
}

@book{bardi1997optimal,
  title={Optimal control and viscosity solutions of Hamilton-Jacobi-Bellman equations},
  author={Bardi, Martino and Dolcetta, Italo Capuzzo and others},
  volume={12},
  year={1997},
  publisher={Springer}
}

@book{fleming2006controlled,
  title={Controlled Markov processes and viscosity solutions},
  author={Fleming, Wendell H and Soner, H Mete},
  year={2006},
  publisher={Springer}
}

@book{bertsekas2012dynamic,
  title={Dynamic programming and optimal control: Volume I},
  author={Bertsekas, Dimitri},
  volume={4},
  year={2012},
  publisher={Athena scientific}
}

@book{pontryagin1963mathematical,
  title={The Mathematical Theory of Optimal Processes},
  author={L. S. Pontryagin},
  year={1963},
  publisher={John Wiley}
}

@book{betts2010practical,
  title={Practical methods for optimal control and estimation using nonlinear programming},
  author={Betts, John T},
  year={2010},
  publisher={SIAM}
}

@article{garcke2017suboptimal,
  title={Suboptimal feedback control of PDEs by solving HJB equations on adaptive sparse grids},
  author={Garcke, Jochen and Kr{\"o}ner, Axel},
  journal={Journal of Scientific Computing},
  volume={70},
  number={1},
  pages={1--28},
  year={2017},
  publisher={Springer}
}

@article{bokanowski2013adaptive,
  title={An adaptive sparse grid semi-Lagrangian scheme for first order Hamilton-Jacobi Bellman equations},
  author={Bokanowski, Olivier and Garcke, Jochen and Griebel, Michael and Klompmaker, Irene},
  journal={Journal of Scientific Computing},
  volume={55},
  number={3},
  pages={575--605},
  year={2013},
  publisher={Springer}
}

@article{dolgov2021tensor,
  title={Tensor decomposition methods for high-dimensional Hamilton--Jacobi--Bellman equations},
  author={Dolgov, Sergey and Kalise, Dante and Kunisch, Karl K},
  journal={SIAM Journal on Scientific Computing},
  volume={43},
  number={3},
  pages={A1625--A1650},
  year={2021},
  publisher={SIAM}
}

@article{darbon2016algorithms,
  title={Algorithms for overcoming the curse of dimensionality for certain Hamilton--Jacobi equations arising in control theory and elsewhere},
  author={Darbon, J{\'e}r{\^o}me and Osher, Stanley},
  journal={Research in the Mathematical Sciences},
  volume={3},
  number={1},
  pages={19},
  year={2016},
  publisher={Springer}
}

@article{chow2019algorithm,
  title={Algorithm for overcoming the curse of dimensionality for state-dependent Hamilton-Jacobi equations},
  author={Chow, Yat Tin and Darbon, J{\'e}r{\^o}me and Osher, Stanley and Yin, Wotao},
  journal={Journal of Computational Physics},
  volume={387},
  pages={376--409},
  year={2019},
  publisher={Elsevier}
}

@article{mayne1966second,
  title={A second-order gradient method for determining optimal trajectories of non-linear discrete-time systems},
  author={Mayne, David},
  journal={International Journal of Control},
  volume={3},
  number={1},
  pages={85--95},
  year={1966},
  publisher={Taylor \& Francis}
}

@article{jacobson1970differential,
  title={Differential dynamic programming},
  author={Jacobson, David H and Mayne, David Q},
  journal={Elsevier Press},
  year={1970}
}

@inproceedings{tassa2014control,
  title={Control-limited differential dynamic programming},
  author={Tassa, Yuval and Mansard, Nicolas and Todorov, Emo},
  booktitle={2014 IEEE International Conference on Robotics and Automation (ICRA)},
  pages={1168--1175},
  year={2014},
  organization={IEEE}
}

@inproceedings{li2004iterative,
  title={Iterative linear quadratic regulator design for nonlinear biological movement systems},
  author={Li, Weiwei and Todorov, Emanuel},
  booktitle={First International Conference on Informatics in Control, Automation and Robotics},
  volume={2},
  pages={222--229},
  year={2004},
  organization={SciTePress}
}

@book{bacsar1998dynamic,
  title={Dynamic noncooperative game theory},
  author={Ba{\c{s}}ar, Tamer and Olsder, Geert Jan},
  year={1998},
  publisher={SIAM}
}

@article{kappen2005linear,
  title={Linear theory for control of nonlinear stochastic systems},
  author={Kappen, Hilbert J},
  journal={Physical review letters},
  volume={95},
  number={20},
  pages={200201},
  year={2005},
  publisher={APS}
}

@article{theodorou2010generalized,
  title={A generalized path integral control approach to reinforcement learning},
  author={Theodorou, Evangelos and Buchli, Jonas and Schaal, Stefan},
  journal={The Journal of Machine Learning Research},
  volume={11},
  pages={3137--3181},
  year={2010},
  publisher={JMLR. org}
}

@article{williams2017model,
  title={Model predictive path integral control: From theory to parallel computation},
  author={Williams, Grady and Aldrich, Andrew and Theodorou, Evangelos A},
  journal={Journal of Guidance, Control, and Dynamics},
  volume={40},
  number={2},
  pages={344--357},
  year={2017},
  publisher={American Institute of Aeronautics and Astronautics}
}

@article{williams2018information,
  title={Information-theoretic model predictive control: Theory and applications to autonomous driving},
  author={Williams, Grady and Drews, Paul and Goldfain, Brian and Rehg, James M and Theodorou, Evangelos A},
  journal={IEEE Transactions on Robotics},
  volume={34},
  number={6},
  pages={1603--1622},
  year={2018},
  publisher={IEEE}
}

@article{hansen2001lao,
  title={LAO*: A heuristic search algorithm that finds solutions with loops},
  author={Hansen, Eric A and Zilberstein, Shlomo},
  journal={Artificial Intelligence},
  volume={129},
  number={1-2},
  pages={35--62},
  year={2001},
  publisher={Elsevier}
}

@article{lincoln2006relaxing,
  title={Relaxing dynamic programming},
  author={Lincoln, Bo and Rantzer, Anders},
  journal={IEEE Transactions on Automatic Control},
  volume={51},
  number={8},
  pages={1249--1260},
  year={2006},
  publisher={IEEE}
}

@article{paden2016design,
  title={Design of Admissible Heuristics for Kinodynamic Motion Planning via Sum-of-Squares Programming},
  author={Paden, Brian and Varriccho, Valerio and Frazzoli, Emilio},
  journal={arXiv preprint arXiv:1609.06277},
  year={2016}
}

@article{brown2022information,
  title={Information relaxations and duality in stochastic dynamic programs: A review and tutorial},
  author={Brown, David B and Smith, James E},
  journal={Foundations and Trends in Optimization},
  volume={5},
  number={3},
  pages={246--339},
  year={2022},
  publisher={Emerald Publishing Limited}
}

@article{vinter1993convex,
  title={Convex duality and nonlinear optimal control},
  author={Vinter, Richard},
  journal={SIAM journal on control and optimization},
  volume={31},
  number={2},
  pages={518--538},
  year={1993},
  publisher={SIAM}
}

@article{kamoutsi2017infinite,
  title={On infinite linear programming and the moment approach to deterministic infinite horizon discounted optimal control problems},
  author={Kamoutsi, Angeliki and Sutter, Tobias and Esfahani, Peyman Mohajerin and Lygeros, John},
  journal={IEEE control systems letters},
  volume={1},
  number={1},
  pages={134--139},
  year={2017},
  publisher={IEEE}
}

@article{ollivier2017information,
  title={Information-geometric optimization algorithms: A unifying picture via invariance principles},
  author={Ollivier, Yann and Arnold, Ludovic and Auger, Anne and Hansen, Nikolaus},
  journal={Journal of Machine Learning Research},
  volume={18},
  number={18},
  pages={1--65},
  year={2017}
}

@book{holtorf2024bounds,
  title={Bounds and low-rank approximation for controlled Markov processes},
  author={Holtorf, Flemming},
  year={2024},
  publisher={Massachusetts Institute of Technology}
}

@article{schulman2017proximal,
  title={Proximal policy optimization algorithms},
  author={Schulman, John and Wolski, Filip and Dhariwal, Prafulla and Radford, Alec and Klimov, Oleg},
  journal={arXiv preprint arXiv:1707.06347},
  year={2017}
}

@inproceedings{silver2014deterministic,
  title={Deterministic policy gradient algorithms},
  author={Silver, David and Lever, Guy and Heess, Nicolas and Degris, Thomas and Wierstra, Daan and Riedmiller, Martin},
  booktitle={International conference on machine learning},
  pages={387--395},
  year={2014},
  organization={Pmlr}
}

@misc{varnai2020path,
  title={Path integral policy improvement: An information-geometric optimization approach},
  author={Varnai, Peter and Dimarogonas, Dimos V},
  year={2020},
  publisher={Nov}
}

@misc{cai2026qguidedsteinvariationalmodel,
      title={Q-Guided Stein Variational Model Predictive Control via RL-informed Policy Prior}, 
      author={Shizhe Cai and Zeya Yin and Jayadeep Jacob and Fabio Ramos},
      year={2026},
      eprint={2507.06625},
      archivePrefix={arXiv},
      primaryClass={cs.RO},
      url={https://arxiv.org/abs/2507.06625}, 
}

@article{posa2014direct,
  title={A direct method for trajectory optimization of rigid bodies through contact},
  author={Posa, Michael and Cantu, Cecilia and Tedrake, Russ},
  journal={The International Journal of Robotics Research},
  volume={33},
  number={1},
  pages={69--81},
  year={2014},
  publisher={Sage Publications Sage UK: London, England}
}

@inproceedings{deits2014footstep,
  title={Footstep planning on uneven terrain with mixed-integer convex optimization},
  author={Deits, Robin and Tedrake, Russ},
  booktitle={2014 IEEE-RAS international conference on humanoid robots},
  pages={279--286},
  year={2014},
  organization={IEEE}
}

@article{zhang2025simultaneous,
  title={Simultaneous trajectory optimization and contact selection for contact-rich manipulation with high-fidelity geometry},
  author={Zhang, Mengchao and Jha, Devesh K and Raghunathan, Arvind U and Hauser, Kris},
  journal={IEEE Transactions on Robotics},
  year={2025},
  publisher={IEEE}
}

@article{li2025multi,
  title={Multi-Agent Guided Policy Search for Non-Cooperative Dynamic Games},
  author={Li, Jingqi and Qu, Gechen and Choi, Jason J and Sojoudi, Somayeh and Tomlin, Claire},
  journal={arXiv preprint arXiv:2509.24226},
  year={2025}
}

@article{lidard2024blending,
  title={Blending data-driven priors in dynamic games},
  author={Lidard, Justin and Hu, Haimin and Hancock, Asher and Zhang, Zixu and Contreras, Albert Gim{\'o} and Modi, Vikash and DeCastro, Jonathan and Gopinath, Deepak and Rosman, Guy and Leonard, Naomi Ehrich and others},
  journal={arXiv preprint arXiv:2402.14174},
  year={2024}
}

@article{GRUNE202319,
title = {Examples for separable control Lyapunov functions and their neural network approximation},
journal = {IFAC-PapersOnLine},
volume = {56},
number = {1},
pages = {19-24},
year = {2023},
note = {12th IFAC Symposium on Nonlinear Control Systems NOLCOS 2022},
issn = {2405-8963},
doi = {https://doi.org/10.1016/j.ifacol.2023.02.004},
url = {https://www.sciencedirect.com/science/article/pii/S2405896323001921},
author = {Lars Grüne and Mario Sperl}
}

@INPROCEEDINGS{10383497,
  author={Sperl, Mario and Saluzzi, Luca and Grüne, Lars and Kalise, Dante},
  booktitle={2023 62nd IEEE Conference on Decision and Control (CDC)}, 
  title={Separable Approximations of Optimal Value Functions Under a Decaying Sensitivity Assumption}, 
  year={2023},
  volume={},
  number={},
  pages={259-264},
  doi={10.1109/CDC49753.2023.10383497}}

@article{KOKOTOVIC2001637,
title = {Constructive nonlinear control: a historical perspective},
journal = {Automatica},
volume = {37},
number = {5},
pages = {637-662},
year = {2001},
issn = {0005-1098},
doi = {https://doi.org/10.1016/S0005-1098(01)00002-4},
url = {https://www.sciencedirect.com/science/article/pii/S0005109801000024},
author = {Petar Kokotović and Murat Arcak}
}

@inproceedings{salzman2015asymptotically,
  title={Asymptotically-optimal motion planning using lower bounds on cost},
  author={Salzman, Oren and Halperin, Dan},
  booktitle={2015 IEEE International Conference on Robotics and Automation (ICRA)},
  pages={4167--4172},
  year={2015},
  organization={IEEE}
}

@article{hart1968formal,
  title={A formal basis for the heuristic determination of minimum cost paths},
  author={Hart, Peter E and Nilsson, Nils J and Raphael, Bertram},
  journal={IEEE transactions on Systems Science and Cybernetics},
  volume={4},
  number={2},
  pages={100--107},
  year={1968},
  publisher={IEEE}
}

@article{freeman1996robust,
  title={Inverse optimality in robust stabilization},
  author={Freeman, Randy A and Kokotovic, Petar V},
  journal={SIAM journal on control and optimization},
  volume={34},
  number={4},
  pages={1365--1391},
  year={1996},
  publisher={SIAM}
}

@book{fleming1975deterministic,
  title={Deterministic and Stochastic Optimal Control},
  author={Fleming, Wendell H and Rishel, Raymond W},
  year={1975},
  publisher={Springer}
}

@article{subbotina2006method,
  title={The method of characteristics for Hamilton—Jacobi equations and applications to dynamical optimization},
  author={Subbotina, NN},
  journal={Journal of mathematical sciences},
  volume={135},
  number={3},
  pages={2955--3091},
  year={2006},
  publisher={Springer}
}

@book{falcone2014semi,
  title     = {Semi-{L}agrangian Approximation Schemes for Linear and {H}amilton--{J}acobi Equations},
  author    = {Falcone, Maurizio and Ferretti, Roberto},
  year      = {2014},
  publisher = {SIAM},
  doi       = {10.1137/1.9781611973051},
}

\begin{IEEEbiography}[{\includegraphics[width=1in,height=1.25in,clip,keepaspectratio]{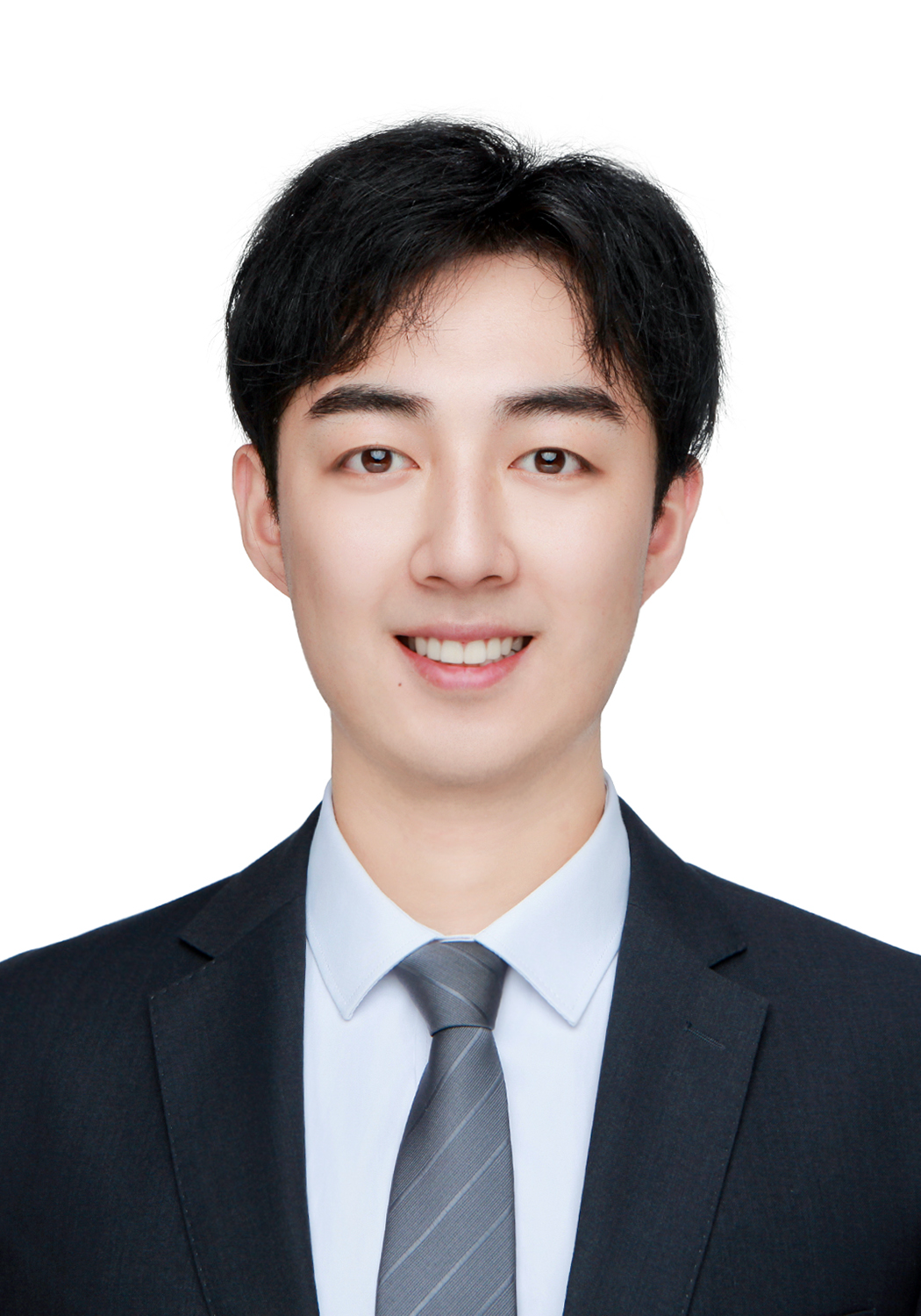}}]{Qi Wei}
received the master’s degree in mechanical engineering from Shanghai Jiao Tong University, Shanghai, in 2023.

He is currently pursuing Ph.D. degree with the School of Mechanical Engineering, Shanghai Jiao Tong University, Shanghai, China.  His research interests include modeling, nonlinear control and motion planning for heavy-duty hydraulic-driven manipulation.\end{IEEEbiography}
\vspace{-1.5\baselineskip}

\begin{IEEEbiography}[{\includegraphics[width=1in,height=1.25in,clip,keepaspectratio]{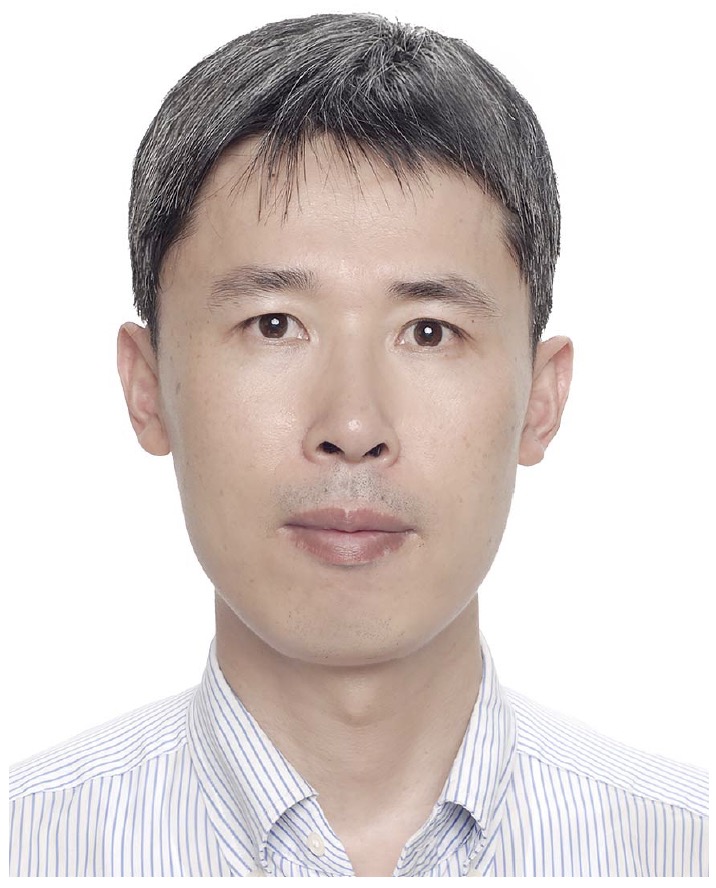}}]{Jianfeng Tao}
received the Ph.D. degree from the Beihang University, Beijing, China, in 2003.

He is currently a Professor with the Institute of Mechatronics and Logistics Equipment, School of Mechanical Engineering, Shanghai Jiao Tong University, Shanghai, China. His research interests include fluid power transmission and control, prognostics and health management, signal processing, and machine learning.\end{IEEEbiography}
\vspace{-1.5\baselineskip}

\begin{IEEEbiography}[{\includegraphics[width=1in,height=1.25in,clip,keepaspectratio]{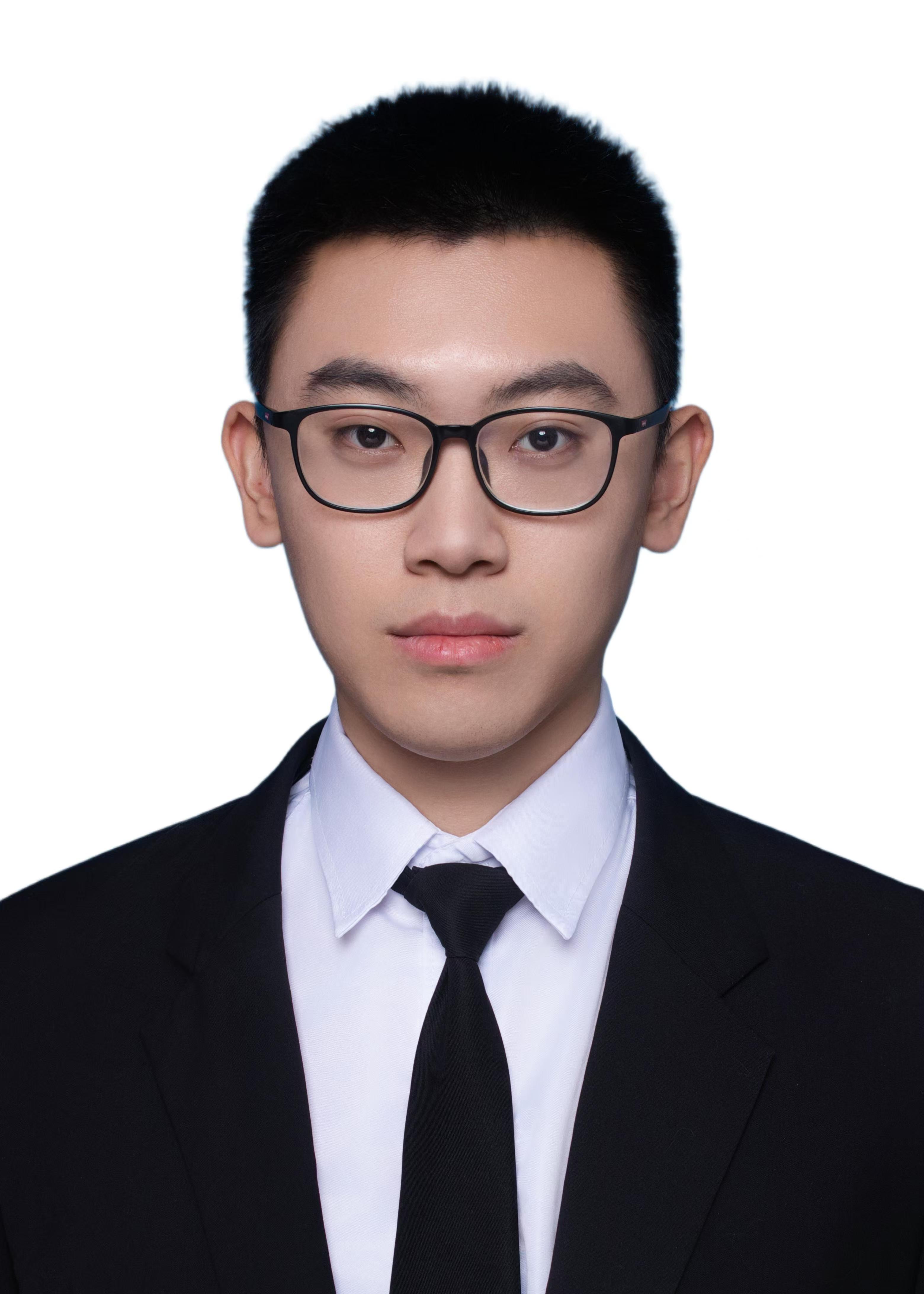}}]{Haoyang Tan}
received the master’s degree in mechanical engineering from Shanghai Jiao Tong University, Shanghai, China, in 2025.

He is currently pursuing the Ph.D. degree with the School of Mechanical Engineering, Shanghai Jiao Tong University, Shanghai, China. His research interests include performance degradation mechanisms, health monitoring, fault diagnosis, and control strategies for electro-hydraulic servo valves and fluid power systems.
\end{IEEEbiography}
\vspace{-1.5\baselineskip}

\begin{IEEEbiography}[{\includegraphics[width=1in,height=1.25in,clip,keepaspectratio]{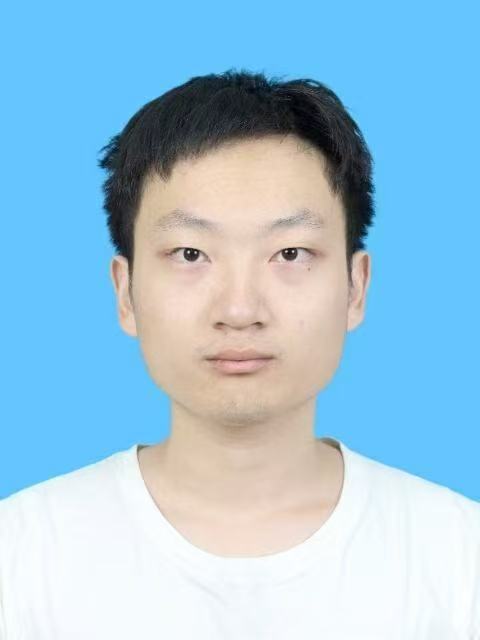}}]{Hongyu Nie}
Hongyu Nie is currently pursuing a Ph.D. degree with the School of Mechanical Engineering, Shanghai Jiao Tong University, Shanghai, China. His research interests include data-driven modeling of hydraulic systems, electro-hydraulic system control technology, and intelligent control and decision-making for heavy-duty robotic manipulators.
\end{IEEEbiography}
\vspace{-1.5\baselineskip}

\vfill

\end{document}